\newtheorem{thm}{Theorem}[section]
\newtheorem{rem}[thm]{Remark}
\newtheorem{lem}[thm]{Lemma}
\newtheorem{prop}[thm]{Proposition}
\newtheorem{claim}[thm]{Claim}
\title{Multi-part cross-intersecting families}
\author{Xiangliang Kong$^{\text{a}}$, Yuanxiao Xi$^{\text{b}}$ and Gennian Ge$^{\text{a,}}$\thanks{
  The research of G. Ge was supported by  the National Natural Science Foundation of China under Grant Nos.11431003 and 61571310, Beijing Scholars Program, Beijing Hundreds of Leading Talents Training Project of Science and Technology, and Beijing Municipal Natural Science Foundation.}\\
  \footnotesize $^{\text{a}}$ School of Mathematical Sciences, Capital Normal University, Beijing, 100048, China\\
  \footnotesize $^{\text{b}}$ School of Mathematical Sciences, Zhejiang University, Hangzhou 310027, Zhejiang, China}
\begin{document}
\date{}
\maketitle

\begin{abstract}
  Let $\mathcal{A}\subseteq{[n]\choose a}$ and $\mathcal{B}\subseteq{[n]\choose b}$ be two families of subsets of $[n]$, we say $\mathcal{A}$ and $\mathcal{B}$ are cross-intersecting if $A\cap B\neq \emptyset$ for all $A\in\mathcal{A}$, $B\in\mathcal{B}$. In this paper, we study cross-intersecting families in the multi-part setting. By characterizing the independent sets of vertex-transitive graphs and their direct products, we determine the sizes and structures of maximum-sized multi-part cross-intersecting families. This generalizes the results of Hilton's~\cite{H77} and Frankl--Tohushige's~\cite{FT92} on cross-intersecting families in the single-part setting.

\medskip
\noindent{\it Keywords:} extremal set theory, cross-intersecting families, Erd\H{o}s--Ko--Rado theorem, vertex-transitive graph.

\smallskip

\noindent {{\it AMS subject classifications\/}:  05D05.}

\end{abstract}

\section{Introduction}

Let $[n]$ be the standard $n$-element set. For an integer $0\leq k\leq n$, denote ${[n]\choose k}$ as the family of all $k$-element subsets of $[n]$. A family $\mathcal{F}$ is said to be $intersecting$ if $A\cap B\neq \emptyset$, for any $A,B\in\mathcal{F}$. The celebrated Erd\H{o}s--Ko--Rado theorem~\cite{EKR} says that if $\mathcal{F}\subseteq {[n]\choose k}$ is an intersecting family with $1\leq k\leq \frac{n}{2}$, then
\begin{equation*}
|\mathcal{F}|\leq {{n-1}\choose {k-1}},
\end{equation*}
and if $n>2k$, the equality holds if and only if every subset in $\mathcal{F}$ contains a fixed element.

Because of its fundamental status in extremal set theory, the Erd\H{o}s--Ko--Rado theorem has numerous extensions in different ways. One of the directions is to study $cross$-$t$-$intersecting$ families: Denote $2^{[n]}$ as the $power~set$ of $[n]$, let $\mathcal{A}_i\subseteq 2^{[n]}$ for each $1\leq i\leq m$, $\mathcal{A}_1,\mathcal{A}_2,\ldots,\mathcal{A}_m$ are said to be cross-t-intersecting, if $|A\cap B|\geq t$ for any $A\in\mathcal{A}_i$ and $B\in\mathcal{A}_j$, $i\neq j$. Especially, we say $\mathcal{A}_1,\mathcal{A}_2,\ldots,\mathcal{A}_m$ are cross-intersecting if $t=1$. 

Hilton~\cite{H77} investigated the cross-intersecting families in ${[n]\choose k}$, and proved the following inequality:
\begin{thm}\emph{(\cite{H77})}\label{h77}
Let $\mathcal{A}_1,\mathcal{A}_2,\ldots, \mathcal{A}_m$ be cross-intersecting families in ${[n]\choose k}$ with $n\geq 2k$. Then
\begin{flalign*}
\sum\limits_{i=1}^{m}|\mathcal{A}_i|\leq\left\{\begin{array}{ll}{n\choose k},&~~~\text{if}~m\leq\frac{n}{k};\\
m\cdot{{n-1}\choose {k-1}},&~~~\text{if}~m\geq\frac{n}{k}.\end{array}\right.
\end{flalign*}
\end{thm}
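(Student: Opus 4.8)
\noindent\emph{Proof sketch.} I plan to prove this by Katona's cyclic permutation method. First I would dispose of the trivial cases: if at most one $\mathcal{A}_i$ is nonempty there is no intersection constraint at all, so $\sum_i|\mathcal{A}_i|\le\binom nk$, and since $\binom nk=\tfrac nk\binom{n-1}{k-1}$ this already matches the stated bound in both ranges of $m$. So from now on assume at least two of the families are nonempty. For a cyclic ordering $\sigma$ of $[n]$, call a $k$-set an \emph{arc} of $\sigma$ if its elements are consecutive on the circle, and let $\mathcal{A}_i^\sigma\subseteq\mathcal{A}_i$ be the arcs of $\sigma$ lying in $\mathcal{A}_i$; the families $\mathcal{A}_1^\sigma,\dots,\mathcal{A}_m^\sigma$ are still cross-intersecting. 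The crux is the one-circle estimate: if $\mathcal{B}_1,\dots,\mathcal{B}_m$ are cross-intersecting families of $k$-arcs on an $n$-cycle and $n\ge 2k$, then $\sum_{i=1}^m|\mathcal{B}_i|\le\max\{n,mk\}$.

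To prove this estimate I would identify each $k$-arc with its starting point, so that $\mathcal{B}_i$ becomes a set $S_i\subseteq\mathbb{Z}_n$ and two arcs meet exactly when their starting points are at cyclic distance at most $k-1$; write $N[s]=\{t:\ \mathrm{dist}(s,t)\le k-1\}$, an interval of $2k-1$ points. If only one $S_i$ is nonempty then trivially $\sum_i|S_i|\le n$. If $S_i$ and $S_j$ are both nonempty then $S_i\subseteq\bigcap_{t\in S_j}N[t]=\{x:\ S_j\subseteq x+N[0]\}$, and counting how many length-$(2k-1)$ windows can contain the $|S_j|$ points of $S_j$ gives $|S_i|\le 2k-|S_j|$, i.e.\ $|S_i|+|S_j|\le 2k$ for every nonempty pair $i\ne j$; summing this over all $\binom{\ell}{2}$ pairs of the $\ell\ge 2$ nonempty families yields $\sum_i|S_i|\le\ell k\le mk$. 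Together with the one-nonempty case this proves $\sum_i|\mathcal{B}_i|\le\max\{n,mk\}$. This is the step where I expect the real work to lie: the window-counting bound is clean on a path but needs care on the cycle when $n$ is close to $2k$, where the windows can wrap around; it is reassuring that $n$ near $2k$ forces $m$ small (for instance $n=2k$ forces $m\le 2$), a regime one can also settle directly via the bijection $A\mapsto[n]\setminus A$.

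Granting the estimate, I would average over all cyclic orderings of $[n]$. By symmetry every $k$-subset is an arc of the same number of orderings, so
\[
\frac{n}{\binom nk}\sum_{i=1}^m|\mathcal{A}_i|\ =\ \operatorname*{avg}_{\sigma}\ \sum_{i=1}^m|\mathcal{A}_i^\sigma|\ \le\ \max\{n,mk\},
\]
hence $\sum_{i=1}^m|\mathcal{A}_i|\le\frac1n\binom nk\max\{n,mk\}=\max\bigl\{\binom nk,\ m\binom{n-1}{k-1}\bigr\}$. Since $\binom nk\ge m\binom{n-1}{k-1}$ precisely when $n\ge mk$, the maximum is $\binom nk$ for $m\le n/k$ and $m\binom{n-1}{k-1}$ for $m\ge n/k$, which is exactly the claimed inequality. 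As a sanity check, the two natural extremal configurations --- one family equal to all of $\binom{[n]}{k}$ with the rest empty, and all $m$ families equal to a fixed star $\{F:\ x\in F\}$ --- are tight on every circle.

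Finally, it is worth recording the graph-theoretic reading, which is closer to the framework of this paper: $\sum_i|\mathcal{A}_i|$ is precisely the independence number of the tensor product $K(n,k)\times K_m$ of the Kneser graph with $K_m$, and one can instead derive the bound from the ratio (Hoffman) bound applied to this vertex-transitive setup together with an analysis of the relevant eigenspaces. I would keep the circle method as the main line and use this only as a cross-check.
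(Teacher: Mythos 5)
Your proof is correct, and it takes a genuinely different route from the one this paper is built on. The paper never reproves Hilton's theorem directly: it is quoted from \cite{H77} and then recovered as the $p=1$ case of Theorem~\ref{main0}, whose proof goes through the vertex-transitive-graph framework of Wang--Zhang/Borg -- split each $\mathcal{A}_i$ into its intersecting core $\mathcal{A}_i^\ast$ and the remainder $\hat{\mathcal{A}}_i\subseteq\bar{N}[\mathcal{A}^\ast]$, note the $\hat{\mathcal{A}}_i$ are pairwise disjoint, and apply Lemma~\ref{lemmaimprimitive} (which rests on the no-homomorphism bound, Theorem~\ref{CK}) to the Kneser graph. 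You instead use Katona's cycle method: a per-circle estimate $\sum_i|\mathcal{B}_i|\le\max\{n,mk\}$ followed by averaging, and the arithmetic in the averaging and in the comparison of $\binom nk$ with $m\binom{n-1}{k-1}$ is right. The step you flag as delicate does go through on the cycle: the windows have length $2k-1\le n-1$, and counting, for each gap of $S_j$ of length $g\ge n-2k+1$, the $g-(n-2k)$ admissible window positions gives at most $(n-|S_j|)-r(n-2k)\le 2k-|S_j|$ centers whenever $r\ge 1$ gaps qualify and $n\ge 2k$, so $|S_i|+|S_j|\le 2k$ holds verbatim even at $n=2k$; your parenthetical fallback that ``$n=2k$ forces $m\le 2$'' is false (all $m$ families may equal a fixed star) but also unnecessary. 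As for what each approach buys: yours is elementary and self-contained for the single-part statement, while the paper's heavier machinery is exactly what makes the multi-part generalization and, crucially, the full equality characterization in Theorem~\ref{main0} tractable -- your averaging argument yields only the inequality, and extracting the extremal families from the circle method would require additional work. Your closing ``cross-check'' via the product $KG_{n,k}\times K_m$ is essentially Zhang's theorem $\alpha(G\times H)=\max\{\alpha(G)|H|,\alpha(H)|G|\}$ already quoted in Section~1, rather than a Hoffman-bound computation, so it is closer to the paper's viewpoint than to an independent verification.
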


In the same paper, Hilton also determined the structures of $\mathcal{A}_i$'s when the equality holds. Since then, there have been many extensions about Theorem~\ref{h77}. Borg~\cite{B09} gave a simple proof of Theorem~\ref{h77}, and generalized it to labeled sets~\cite{B08}, signed sets~\cite{BL10} and permutations~\cite{B10}. Using the results of the independent number about vertex-transitive graphs, Wang and Zhang~\cite{WZ11} extended this theorem to general symmetric systems, which comprise finite sets, finite vector spaces and permutations, etc.

Hilton and Milner~\cite{HM67} also dealt with pairs of cross-intersecting families in ${[n]\choose k}$ when neither of the two families is empty:
\begin{thm}\emph{(\cite{HM67})}\label{hm67}
Let $\mathcal{A},\mathcal{B}\subseteq {[n]\choose k}$ be non-empty cross-intersecting families with $n\geq 2k$. Then $|\mathcal{A}|+|\mathcal{B}|\leq {n\choose k}-{{n-k}\choose k}+1$.
\end{thm}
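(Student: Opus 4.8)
The plan is to derive the bound from the Kruskal--Katona theorem, using the non-emptiness of the two families only to control the size of $\mathcal A$. Since $\mathcal B\neq\emptyset$, fix $B_0\in\mathcal B$; as $\mathcal A,\mathcal B$ are cross-intersecting, $B_0$ meets every $A\in\mathcal A$, so $\mathcal A\subseteq\{A\in\binom{[n]}{k}:A\cap B_0\neq\emptyset\}$ and hence $|\mathcal A|\le\binom nk-\binom{n-k}{k}$. On the other hand every $B\in\mathcal B$ meets every $A\in\mathcal A$, i.e.\ $B$ is not contained in any complement $[n]\setminus A$; thus $\mathcal B\subseteq\binom{[n]}{k}\setminus\bigcup_{A\in\mathcal A}\binom{[n]\setminus A}{k}$, and so
\[
|\mathcal A|+|\mathcal B|\;\le\;|\mathcal A|+\binom nk-\Big|\bigcup_{A\in\mathcal A}\textstyle\binom{[n]\setminus A}{k}\Big|.
\]
Consequently the theorem follows once we prove, for every $\mathcal A\subseteq\binom{[n]}{k}$ with $1\le|\mathcal A|\le\binom nk-\binom{n-k}{k}$ (here $n\ge 2k$, so each $[n]\setminus A$ has size $n-k\ge k$), the shadow estimate $\big|\bigcup_{A\in\mathcal A}\binom{[n]\setminus A}{k}\big|\ge|\mathcal A|+\binom{n-k}{k}-1$.

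To handle the shadow estimate I would pass to the complement family $\mathcal C=\{[n]\setminus A:A\in\mathcal A\}\subseteq\binom{[n]}{n-k}$, so that $\bigcup_{A\in\mathcal A}\binom{[n]\setminus A}{k}$ is exactly the set $\partial_{(k)}\mathcal C$ of all $k$-subsets of members of $\mathcal C$, and the estimate reads $|\partial_{(k)}\mathcal C|\ge|\mathcal C|+\binom{n-k}{k}-1$. By the Kruskal--Katona theorem the left-hand side is minimized, for fixed $|\mathcal C|$, when $\mathcal C$ is the initial segment of $\binom{[n]}{n-k}$ in colexicographic order, so it suffices to check the inequality for such $\mathcal C$; for a colex initial segment one reads off $|\partial_{(k)}\mathcal C|$ from the cascade (binomial) representation of $|\mathcal C|$ and verifies the bound by a short calculation/induction. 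The cases $|\mathcal C|=1$ and $|\mathcal C|=\binom nk-\binom{n-k}{k}$ are the equality cases, and combined with the equality conditions in Kruskal--Katona they yield Hilton and Milner's description of the extremal pairs — one family a single $k$-set $S$, the other all $k$-sets meeting $S$. (A preliminary simultaneous shifting of $\mathcal A$ and $\mathcal B$, which preserves sizes, non-emptiness and cross-intersection, can be inserted if one wishes, but it is not needed for this route.)

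The step I expect to be the real obstacle is the shadow estimate itself. It is essential that $|\mathcal A|\le\binom nk-\binom{n-k}{k}$: the bare inequality $|\partial_{(k)}\mathcal C|\ge|\mathcal C|+\binom{n-k}{k}-1$ is false for large $\mathcal C$ — e.g.\ $\mathcal C=\binom{[n]}{n-k}$ gives $\partial_{(k)}\mathcal C=\binom{[n]}{k}$, which violates it whenever $n>2k$ — so the hypothesis $\mathcal B\neq\emptyset$ is precisely what confines us to the range where the estimate holds. Proving it throughout that range, where it is moreover tight at both ends, is not entirely routine: even along colex initial segments the $k$-shadow grows unevenly (some additional sets of $\mathcal C$ enlarge it by several $k$-sets, others by none), so one cannot simply say "each new set of $\mathcal C$ increases the shadow by at least one", and the honest argument must track the cascade expansion. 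A workable alternative that sidesteps this bookkeeping would be a defect form of Hall's theorem: fix $A_0\in\mathcal A$ and construct an injection from $\mathcal A\setminus\{A_0\}$ into $\{F\in\binom{[n]}{k}:F\cap B_0\neq\emptyset\}\setminus\mathcal B$, whose Hall condition is again the same shadow estimate — I would expect that approach to need the same input and to be of comparable difficulty.
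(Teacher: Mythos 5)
The paper does not prove this statement at all --- it is quoted from Hilton--Milner \cite{HM67} as known background (and is subsumed by the cited Theorem~\ref{FT92}) --- so your argument has to stand on its own, and as written it does not. Your reduction is fine and is indeed the classical Kruskal--Katona route: fixing $B_0\in\mathcal B$ gives $|\mathcal A|\le{n\choose k}-{n-k\choose k}$, cross-intersection gives $\mathcal B\subseteq{[n]\choose k}\setminus\partial_{(k)}\mathcal C$ with $\mathcal C=\{[n]\setminus A:A\in\mathcal A\}$, and the theorem then follows from the estimate $|\partial_{(k)}\mathcal C|\ge|\mathcal C|+{n-k\choose k}-1$ for $1\le|\mathcal C|\le{n\choose k}-{n-k\choose k}$. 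But that estimate carries the entire quantitative content of the theorem, and you never prove it: after invoking Kruskal--Katona to pass to colex initial segments (legitimate), you assert the inequality is ``verified by a short calculation/induction'' while conceding in the next breath that this verification is ``not entirely routine'' and is ``the real obstacle.'' That concession is accurate. Along the colex order the $k$-shadow increments are irregular (frequently zero), and the inequality is tight at several interior points of the admissible range, not only at the endpoints --- for $n=6$, $k=2$ it holds with equality at $|\mathcal C|=1$, $5$ and $9$ --- so no soft ``each new set adds at least one shadow element'' argument can work, and one genuinely has to control the cascade representation (or run an induction on $n$ in the style of Frankl--Tokushige). As it stands, the proposal is a correct reduction plus an unproved key lemma: a genuine gap at the central step.

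A secondary inaccuracy, immaterial to the inequality itself: your parenthetical claim that the extremal pairs are exactly a single $k$-set together with all $k$-sets meeting it is not correct in general. For $k=2$ and every $n\ge 4$, taking $\mathcal A=\mathcal B=\{F\in{[n]\choose 2}:1\in F\}$ gives $|\mathcal A|+|\mathcal B|=2(n-1)={n\choose 2}-{n-2\choose 2}+1$, so two identical stars also attain the bound; similar coincidences occur whenever $2{n-1\choose k-1}={n\choose k}-{n-k\choose k}+1$. Since the statement you were asked to prove only asserts the upper bound, this does not affect the verdict, but it is a sign that the equality analysis you sketch from Kruskal--Katona would also need more care than indicated.
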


This result was generalized by Frankl and Tokushige~\cite{FT92} to the case when $\mathcal{A}$ and $\mathcal{B}$ are not necessarily in the same $k$-uniform subfamily of $2^{[n]}$:
\begin{thm}\emph{(\cite{FT92})}\label{FT92}
Let $\mathcal{A}\subseteq {[n]\choose a}$ and $\mathcal{B}\subseteq {[n]\choose b}$ be non-empty cross-intersecting families with $n\geq a+b$, $a\leq b$. Then $|\mathcal{A}|+|\mathcal{B}|\leq {n\choose b}-{{n-a}\choose b}+1$.
\end{thm}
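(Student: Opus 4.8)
The plan is to translate the cross-intersecting condition into a statement about shadows and then to apply the Kruskal--Katona theorem. For a family $\mathcal{F}\subseteq{[n]\choose r}$ and an integer $s\le r$, write $\partial_s(\mathcal{F})=\{G\in{[n]\choose s}:G\subseteq F\text{ for some }F\in\mathcal{F}\}$ for its $s$-shadow, and set $\widehat{\mathcal{A}}=\{[n]\setminus A:A\in\mathcal{A}\}\subseteq{[n]\choose n-a}$, so $|\widehat{\mathcal{A}}|=|\mathcal{A}|$ and (as $n\ge a+b$ forces $b\le n-a$) the shadow $\partial_b(\widehat{\mathcal{A}})$ is defined. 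A pair $A\in\mathcal{A}$, $B\in\mathcal{B}$ is disjoint exactly when $B\subseteq[n]\setminus A$, so $\mathcal{A},\mathcal{B}$ are cross-intersecting precisely when $\mathcal{B}\cap\partial_b(\widehat{\mathcal{A}})=\emptyset$, whence
\begin{equation*}
|\mathcal{A}|+|\mathcal{B}|\ \le\ |\mathcal{A}|+{n\choose b}-|\partial_b(\widehat{\mathcal{A}})|\ =\ {n\choose b}+|\widehat{\mathcal{A}}|-|\partial_b(\widehat{\mathcal{A}})| .
\end{equation*}
Non-emptiness gives two more bounds: any $A_0\in\mathcal{A}$ is met by every $B\in\mathcal{B}$, so $|\mathcal{B}|\le{n\choose b}-{n-a\choose b}$ (which already proves the theorem when $|\mathcal{A}|=1$); and any $B_0\in\mathcal{B}$ is met by every $A\in\mathcal{A}$, so $|\widehat{\mathcal{A}}|=|\mathcal{A}|\le{n\choose a}-{n-b\choose a}$.

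By the displayed inequality the theorem reduces to the following: for every non-empty $\mathcal{F}\subseteq{[n]\choose n-a}$ with $|\mathcal{F}|\le{n\choose a}-{n-b\choose a}$,
\begin{equation*}
|\partial_b(\mathcal{F})|\ \ge\ |\mathcal{F}|+{n-a\choose b}-1 ,
\end{equation*}
which is tight at $|\mathcal{F}|=1$ (the extremal family $\mathcal{A}=\{A_0\}$, $\mathcal{B}=\{B:B\cap A_0\ne\emptyset\}$). I would prove this with Kruskal--Katona: writing $|\mathcal{F}|={x\choose n-a}$ for the unique real $x\ge n-a$, the Lov\'asz form of the inequality (iterated from $(n-a)$-sets down to $b$-sets) gives $|\partial_b(\mathcal{F})|\ge{x\choose b}$, so it suffices to prove the one-variable inequality
\begin{equation*}
\phi(x):={x\choose b}-{x\choose n-a}\ \ge\ {n-a\choose b}-1\qquad\text{for all }x\in[\,n-a,\,x_0\,] ,
\end{equation*}
where $x_0$ is defined by ${x_0\choose n-a}={n\choose a}-{n-b\choose a}$. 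Equality holds at $x=n-a$; one also checks $x_0\ge n-1$, and the value $x=x_0$ records the configuration $\mathcal{A}=\{A:A\cap B_0\ne\emptyset\}$, $\mathcal{B}=\{B_0\}$.

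To secure $\phi(x)\ge\phi(n-a)$ on all of $[n-a,x_0]$, the plan has two steps. First, show $\phi$ is unimodal on $[n-a,\infty)$: one has $\phi'(n-a)>0$ since ${n-a\choose b}$ is large, $\phi(x)\to-\infty$ as $x\to\infty$ (its degree $n-a$ exceeds $b$ when $n>a+b$), and $\phi'(x)={x\choose b}\sum_{i=0}^{b-1}\frac{1}{x-i}-{x\choose n-a}\sum_{i=0}^{n-a-1}\frac{1}{x-i}$ changes sign just once because ${x\choose n-a}\big/{x\choose b}=\tfrac{b!}{(n-a)!}\prod_{i=b}^{n-a-1}(x-i)$ is increasing for $x\ge n-a$; consequently $\phi(x)\ge\min\{\phi(n-a),\phi(x_0)\}$ on the interval. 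Second, verify the remaining purely arithmetic inequality $\phi(x_0)\ge\phi(n-a)$, i.e.
\begin{equation*}
{x_0\choose b}\ \ge\ {n\choose a}-{n-b\choose a}+{n-a\choose b}-1 ,
\end{equation*}
equivalently, taking the colex cascade of the integer ${n\choose a}-{n-b\choose a}$ in the ${\cdot\choose n-a}$ basis and pushing it down to the ${\cdot\choose b}$ basis. I expect this second step -- in effect, locating $x_0$ with enough precision -- to be the main obstacle; by contrast the case $n=a+b$ is separate but immediate, since then $\widehat{\mathcal{A}}$ and $\mathcal{B}$ are disjoint subfamilies of ${[n]\choose b}$, whence $|\mathcal{A}|+|\mathcal{B}|\le{n\choose b}={n\choose b}-{n-a\choose b}+1$. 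Finally, tracking equality back through Kruskal--Katona and through $\phi$ recovers precisely the extremal families above (together with, when $n=a+b$, all pairs with $\mathcal{B}={[n]\choose b}\setminus\widehat{\mathcal{A}}$).
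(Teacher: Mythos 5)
The paper never proves Theorem~\ref{FT92} --- it is quoted from Frankl--Tokushige and used as a benchmark for Theorem~\ref{main1}, whose proof uses a completely different (part-transitive bipartite graph) machinery --- so your proposal has to stand on its own as a Kruskal--Katona argument. Its first half is fine and is indeed the classical route: complementing $\mathcal{A}$, noting that cross-intersection is exactly $\mathcal{B}\cap\partial_b(\widehat{\mathcal{A}})=\emptyset$, handling $n=a+b$ separately, and using non-emptiness of $\mathcal{B}$ to confine $|\widehat{\mathcal{A}}|$ to the range $\bigl[1,{n\choose a}-{n-b\choose a}\bigr]$.

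The gap is the replacement of the exact Kruskal--Katona theorem by its Lov\'asz form. The one-variable inequality $\phi(x)={x\choose b}-{x\choose n-a}\ge{n-a\choose b}-1$ on $[n-a,x_0]$ is false, and it fails exactly at the endpoint you flagged as the ``main obstacle''. Take $n=7$, $a=b=3$: the configuration $\mathcal{B}=\{B_0\}$, $\mathcal{A}=\{A:A\cap B_0\ne\emptyset\}$ gives $|\widehat{\mathcal{A}}|=31={7\choose 3}-{4\choose 3}$, and your reduction needs $|\partial_3(\widehat{\mathcal{A}})|\ge 31+{4\choose 3}-1=34$. The exact cascade form of Kruskal--Katona delivers this with equality, since $31={6\choose 4}+{5\choose 3}+{4\choose 2}$ forces $|\partial_3|\ge{6\choose 3}+{5\choose 2}+{4\choose 1}=34$; but the Lov\'asz form only yields ${x_0\choose 3}$ where ${x_0\choose 4}=31$, i.e.\ $x_0\approx 6.85$ and ${x_0\choose 3}\approx 32.3$, so even after rounding up you certify only $33$, and $\phi(x_0)\approx 1.3<3={n-a\choose b}-1$. (Already $n=5$, $a=b=2$ has $\phi(x_0)<\phi(n-a)$; there integrality happens to rescue the bound, but for $n=7$, $a=b=3$ it does not.) So your second step is not a matter of ``locating $x_0$ with enough precision'': the inequality you reduced to is simply untrue, because the real-valued relaxation is too lossy near the top of the admissible range --- precisely where the second extremal configuration lives, and where the shadow bound must be tight. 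The repair is to run the argument with the exact colex/cascade Kruskal--Katona theorem (as Frankl and Tokushige do): expand $|\widehat{\mathcal{A}}|$ in its cascade at level $n-a$ and push it down to level $b$; then the unimodality analysis of $\phi$ becomes unnecessary, while your reduction, the range restriction, and the $n=a+b$ case can be kept as they are.
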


In~\cite{WZ13}, Wang and Zhang generalized Theorem~\ref{FT92} to cross-t-intersecting families. Recently, using shifting techniques, Frankl and Kupavskii~\cite{FK17} gave another proof of the result of Wang and Zhang for the case when $\mathcal{A},\mathcal{B}\subseteq{[n]\choose k}$.

As another direction, the multi-part extension of the Erd\H{o}s--Ko--Rado problem was introduced by Frankl~\cite{F96}, in connection with a similar result of Sali~\cite{Sali92}. For an integer $p\geq 1$ and positive integers $n_1,\ldots, n_p$, take $[\sum_{i\in[p]}n_i]$ to be the ground set. Then this ground set can be viewed as the disjoint union of $p$ parts $\bigsqcup_{i\in[p]}S_i$, where $S_1=[n_1]$ and $S_i=\{1+\sum_{j\in[i-1]}n_j,\ldots,\sum_{j\in[i]}n_j\}$ for $2\leq i\leq p$. More generally, denote $2^{S_i}$ as the $power~ set$ of $S_i$, for sets $A_1\in 2^{S_1},\ldots, A_p\in 2^{S_p}$, let $\bigsqcup_{i\in[p]} A_i$ be the subset of $\bigsqcup_{i\in[p]}S_i$ with $A_i$ in the $i$-th part, and for families $\mathcal{F}_1\subseteq 2^{S_1},\ldots,\mathcal{F}_p\subseteq 2^{S_p}$, let $\prod_{i\in[p]}\mathcal{F}_i=\{\bigsqcup_{i\in[p]} A_i:~A_i\in \mathcal{F}_i\}$. Then consider $k_1\in[n_1],\ldots, k_p\in[n_p]$, we denote $\prod_{i\in[p]}{[n_i]\choose k_i}$ as the family of all subsets of $\bigsqcup_{i\in[p]}S_i$ which have exactly $k_i$ elements in the $i$-th part. Therefore, families of the form $\mathcal{F}\subseteq \prod_{i\in [p]}{[n_i]\choose k_i}$ can be viewed as the natural generalization of $k$-uniform families to the multi-part setting. Similarly, a multi-part family is intersecting if any two sets of this family intersect in at least one of the $p$ parts.

Frankl proved that for any integer $p\geq 1$, any positive integers $n_1,\ldots,n_p$ and $k_1,\ldots,k_p$ satisfying $\frac{k_1}{n_1}\leq\ldots\leq\frac{k_p}{n_p}\leq\frac{1}{2}$, if $\mathcal{F}\subseteq\prod_{i\in[p]}{[n_i]\choose k_i}$ is a multi-part intersecting family, then
\begin{equation*}
|\mathcal{F}|\leq\frac{k_p}{n_p}\cdot\prod_{i\in[p]}{n_i\choose k_i}.
\end{equation*}
This bound is sharp, for example, it is attained by the following family:
\begin{equation*}
\{A\in{[n_p]\choose k_p}:~i\in A,\text{~for~some~}i\in[n_p]\}\times\prod_{i\in[p-1]}{[n_i]\choose k_i}.
\end{equation*}

Recall that the $Kneser~graph$ $KG_{n,k}$ is the graph on the vertex set ${[n]\choose k}$, with $A,B\in{[n]\choose k}$ forming an edge if and only if $A\cap B=\emptyset$. And an intersecting subfamily of ${[n]\choose k}$ corresponds to an independent set in $KG_{n,k}$. Hence an intersecting subfamily of $\prod_{i\in[p]}{[n_i]\choose k_i}$ corresponds to an independent set in the graph (direct) product $KG_{n_1,k_1}\times\ldots\times KG_{n_p,k_p}$. Therefore, Frankl's result can be viewed as a consequence of the general fact that $\alpha(G\times H)=\max\{\alpha(G)|H|,\alpha(H)|G|\}$ for vertex-transitive graphs $G$ and $H$, which was proved by Zhang in~\cite{zhang2012independent}.

Recently, Kwan, Sudakov and Vieira~\cite{KSV18} considered a stability version of the Erd\H{o}s--Ko--Rado theorem in the multi-part setting. They determined the maximum size of the non-trivially intersecting multi-part family when all the $n_i$'s are sufficiently large. This disproved a conjecture proposed by Alon and Katona, which was also mentioned in~\cite{Katona2017}.

In this paper, we extend Theorem~\ref{h77} and Theorem~\ref{FT92} to the $multi$-$part$ version. For any subset $S\subseteq[n]$, denote $\bar{S}$ as the complementary set of $S$ in $[n]$. Moreover, let $\mathcal{F}$ be a family of subsets of $[n]$, for any subfamily $\mathcal{A}\subseteq\mathcal{F}$, denote ${\mathcal{A}}_{\mathcal{F}}=\{B\in\mathcal{F}:A\cap B=\emptyset\text{ for some }A\in\mathcal{A}\}$. 

Our main results are as follows.

\begin{thm}\label{main0}
Given a positive integer $p$, let $n_1,n_2,\ldots,n_p$ and $k_1,k_2,\ldots,k_p$ be positive integers satisfying $n_i\geq 2k_i$ for all $i\in[p]$. Let $X=\prod_{i\in[p]}{[n_i]\choose k_i}$ and $\mathcal{A}_1,\mathcal{A}_2,\ldots,\mathcal{A}_m$ be cross-intersecting families over $X$ with $\mathcal{A}_1\neq \emptyset$. Then
\begin{flalign}\label{bound1}
\sum\limits_{i=1}^{m}|\mathcal{A}_i|\leq\left\{\begin{array}{ll}|X|,&~~~\text{if}~m\leq\min\limits_{i\in[p]}\frac{n_i}{k_i};\\m\cdot M,&~~~\text{if}~m\ge\min\limits_{i\in[p]}\frac{n_i}{k_i},\end{array}\right.
\end{flalign}
where $M=\max_{i\in[p]}{n_i-1\choose k_i-1}\prod_{j\ne i}{n_j\choose k_j}$. Furthermore, the bound is attained if and only if one of the following holds:
\begin{itemize}
  {\item[(i)] $m<\min_{i\in[p]}\frac{n_i}{k_i}$ and $\mathcal{A}_1=X$, $\mathcal{A}_2=\cdots=\mathcal{A}_m=\emptyset$;}
  {\item[(ii)] $m>\min_{i\in[p]}\frac{n_i}{k_i}$ and $\mathcal{A}_1=\cdots=\mathcal{A}_m=I$, where $I$ is a maximum intersecting family in $X$;}
  {\item[(iii)] $m=\min_{i\in[p]}\frac{n_i}{k_i}$ and $\mathcal{A}_1,\ldots,\mathcal{A}_m$ are as in $(i)$ or $(ii)$, or there exists a non-empty set $S_1\subseteq\{s\in[p]:~\frac{n_s}{k_s}=2\}$ and $\mathcal{F}=\prod_{s\in S_1}{[n_s]\choose k_s}$ such that
   \begin{flalign}\label{formula}
   \begin{array}{l}
   \mathcal{A}_1=(\mathcal{A}\sqcup(\mathcal{E}\cup \mathcal{E}_{\mathcal{F}}))\times\prod\limits_{s\in [p]\setminus S_1}{[n_s]\choose k_s}~\text{and}~
   \mathcal{A}_2=(\mathcal{A}\sqcup(\mathcal{E}'\cup\mathcal{E}'_{\mathcal{F}}))\times\prod\limits_{s\in [p]\setminus S_1}{[n_s]\choose k_s}
   \end{array}
   \end{flalign}
   for some $\mathcal{A}$, $\mathcal{E}$, $\mathcal{E}'\subseteq\mathcal{F}$, where $\mathcal{A}=\{A_1,\ldots,A_{w_0}\}$ satisfying $2w_0< |\mathcal{F}|$ and $A_i\ne \bar{A}_j$ for all $i\ne j\in [w_0]$, $\mathcal{E}\sqcup\mathcal{E}'=\{E_1,\ldots,E_v\}$ and $\mathcal{E}_\mathcal{F}\sqcup\mathcal{E}'_\mathcal{F}=\{\bar{E}_1,\ldots,\bar{E}_v\}$ satisfying $2(v+w_0)=\prod_{s\in S_1}{n_s\choose k_s}$ and $\sqcup_{j=1}^v\{E_j,\bar{E}_j\}=\mathcal{F}\setminus(\mathcal{A}\sqcup \mathcal{A}_\mathcal{F})$.
}
\end{itemize}
\end{thm}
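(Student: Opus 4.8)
The plan is to reduce Theorem~\ref{main0} to a statement about independent sets in the direct product of Kneser graphs, and then to invoke the structure theory of independent sets in vertex-transitive graphs and their products. Write $G_i = KG_{n_i,k_i}$ and $G = G_1 \times \cdots \times G_p$, so that $X = V(G)$ and cross-intersecting families $\mathcal{A}_1,\dots,\mathcal{A}_m$ over $X$ are precisely families such that no $A \in \mathcal{A}_i$ and $B \in \mathcal{A}_j$ (with $i \ne j$) are adjacent in $G$; equivalently $\mathcal{A}_i \cup \mathcal{A}_j$ is independent in $G$ for all $i \ne j$, and in particular each $\mathcal{A}_i$ is independent. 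Since $G_i$ is vertex-transitive with $\alpha(G_i) = \binom{n_i-1}{k_i-1}\binom{}{}^{-1}\cdots$ — more precisely $\alpha(G_i)/|G_i| = k_i/n_i$ by Erdős–Ko–Rado — the fractional independence ratio of $G$ is $\min_i k_i/n_i$, and by Zhang's theorem $\alpha(G) = \max_i \alpha(G_i)\prod_{j\ne i}|G_j| = M$. So $M$ and the threshold $\min_i n_i/k_i = |X|/\alpha(G)$ are exactly the graph-theoretic quantities one expects.

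First I would prove the inequality \eqref{bound1} by the standard averaging/fractional-relaxation argument used by Borg~\cite{B09} and Wang–Zhang~\cite{WZ11} for the single-part case: consider the vertex-transitive automorphism group of $G$ (the product of the symmetric groups $S_{n_i}$ acting on each part), and for a random automorphism $\sigma$ and a random maximum independent set structure, count incidences to get $\sum_i |\mathcal{A}_i| \le \max\{|X|, mM\}$. Concretely, when $m \le \min_i n_i/k_i$ one uses that any two of the $\mathcal{A}_i$ are disjoint-union-free in the sense that $\mathcal{A}_i \cup \mathcal{A}_j$ is independent, hence $\sum_i |\mathcal{A}_i| \le \alpha$-type bound forces $\sum_i|\mathcal{A}_i| \le |X|$; when $m$ is large, each $|\mathcal{A}_i| \le \alpha(G) = M$ (since each $\mathcal{A}_i$ is independent) gives $\sum_i |\mathcal{A}_i| \le mM$. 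The only subtlety in the inequality is handling $\mathcal{A}_1 \ne \emptyset$: if some $\mathcal{A}_j$ is empty for $j \ge 2$, the bound is immediate, so one may assume all are non-empty and then apply the cross-intersecting constraint uniformly.

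The real work is the equality characterization, and this is where I expect the main obstacle. The cases $m < \min_i n_i/k_i$ and $m > \min_i n_i/k_i$ should follow from the rigidity part of Zhang's / the EKR uniqueness theorem: in the strictly-subcritical regime the only way to reach $|X|$ is $\mathcal{A}_1 = X$ and the rest empty (any non-trivial splitting strictly loses, because putting a vertex in $\mathcal{A}_1$ forbids its whole neighbourhood from every other $\mathcal{A}_j$), and in the strictly-supercritical regime every $\mathcal{A}_i$ must itself be a maximum independent set, and the cross-intersecting condition plus uniqueness of maximum independent sets in $KG_{n,k}$ for $n > 2k$ forces them all to coincide with a single star $I$. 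The genuinely delicate case is the boundary $m = \min_i n_i/k_i$, which can only be reached with extra structure coming from the coordinates $s$ where $n_s = 2k_s$ — there $KG_{n_s,k_s}$ is a perfect matching on $\binom{n_s}{k_s}$ vertices, whose independent sets are exactly the "transversals" picking one endpoint of each edge, and these are far from unique. Here I would isolate the set $S_1$ of such coordinates, factor $X = \mathcal{F} \times \prod_{s \notin S_1}\binom{[n_s]}{k_s}$ with $\mathcal{F} = \prod_{s \in S_1}\binom{[n_s]}{k_s}$ (a disjoint union of $\tfrac12|\mathcal{F}|$ "complementary pairs" $\{E, \bar E\}$ under the product-matching structure), and carefully track, pair by pair, how the $\binom{n_s}{k_s}$-fold matching structure lets one distribute the two endpoints of each pair between $\mathcal{A}_1$ and $\mathcal{A}_2$ (the sets $\mathcal{E}, \mathcal{E}'$ and their "complement-closures" $\mathcal{E}_\mathcal{F}, \mathcal{E}'_\mathcal{F}$) while keeping everything cross-intersecting and tight; the common part $\mathcal{A}$ collects pairs assigned wholly to both. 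Verifying that \eqref{formula} exhausts all equality configurations amounts to a somewhat intricate but elementary combinatorial analysis of independent sets in a disjoint union of a matching with a product of Kneser graphs, and checking the constraints $2w_0 < |\mathcal{F}|$, $A_i \ne \bar A_j$, and $2(v+w_0) = \prod_{s\in S_1}\binom{n_s}{k_s}$ fall out of the tightness count $\sum_i|\mathcal{A}_i| = |X|$. I would prove the general-$p$ statement by first establishing the single-part core ($p=1$, recovering and slightly refining Hilton's equality cases, including the $n = 2k$ phenomenon) and then lifting to products via the factorization above, using that $\alpha(G\times H)$ is achieved only in the "expected" ways except on the matching coordinates.
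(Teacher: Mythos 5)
Your reduction starts from a false premise: the cross-intersecting condition only forbids edges of $G$ \emph{between} different families, so it does not make $\mathcal{A}_i\cup\mathcal{A}_j$ independent and, in particular, does not make each $\mathcal{A}_i$ an independent set of $G$. The theorem's own extremal case (i) has $\mathcal{A}_1=X$, which is not independent at all, and Hilton--Milner-type configurations $\mathcal{A}_1=\{A:A\cap B\neq\emptyset\}$, $\mathcal{A}_2=\{B\}$ contain many disjoint pairs inside $\mathcal{A}_1$. Consequently both halves of your argument for (\ref{bound1}) collapse: in the regime $m\ge\min_i n_i/k_i$ you bound $|\mathcal{A}_i|\le\alpha(G)=M$ ``since each $\mathcal{A}_i$ is independent'', which is unjustified (and would make the statement nearly trivial, contradicting the regime where $|X|>mM$ is attained by a non-intersecting family), and in the regime $m\le\min_i n_i/k_i$ the ``$\alpha$-type bound'' is never actually formulated. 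The missing idea is the decomposition the paper uses: split each $\mathcal{A}_i$ into $\mathcal{A}_i^{\ast}=\{A\in\mathcal{A}_i:\ A\cap B\neq\emptyset\ \text{for all}\ B\in\mathcal{A}_i\}$ and $\hat{\mathcal{A}}_i=\mathcal{A}_i\setminus\mathcal{A}_i^{\ast}$; then $\mathcal{A}^{\ast}=\bigcup_i\mathcal{A}_i^{\ast}$ is intersecting (hence independent in $G$), the $\hat{\mathcal{A}}_i$ are pairwise disjoint and contained in $\bar N_G[\mathcal{A}^{\ast}]$, and the vertex-transitive inequality $|A|+\frac{\alpha(G)}{|G|}|\bar N_G[A]|\le\alpha(G)$ (Lemma~\ref{lemmaimprimitive}) yields $\sum_i|\mathcal{A}_i|\le|G|+\bigl(m-\tfrac{n_1}{k_1}\bigr)|\mathcal{A}^{\ast}|$, from which both regimes of (\ref{bound1}) follow at once.

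Without that decomposition the equality analysis cannot get started either: the boundary case $m=\min_i n_i/k_i$ is governed by the equality case of Lemma~\ref{lemmaimprimitive}, namely $\mathcal{A}^{\ast}$ is empty, maximum, or an \emph{imprimitive} independent set of $G$, and the paper then pins down all imprimitive independent sets (via Proposition~\ref{note1}, which rests on Theorems~\ref{tensor product}, \ref{normal-imprimitive} and \ref{imprimitive product kneser graph}) as sets of the form $\mathcal{A}\times X'$ supported only on the coordinates with $n_s=2k_s$. Your sketch correctly guesses that the matching coordinates are responsible, but gives no mechanism for proving that only they can contribute, which is the actual content of case (iii). Two further inaccuracies: in case (ii) you claim the common family is a \emph{star}, which is false when some $n_i=2k_i$ (the theorem only forces a common maximum intersecting family); and your plan to prove $p=1$ first and ``lift'' to products is unsubstantiated, since controlling maximum and imprimitive independent sets of the product coordinatewise is exactly what the MIS-normality machinery is needed for.
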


\begin{rem}\label{rem1}
In \cite{WZ11}, the authors proved a similar result (Theorem 2.5 in \cite{WZ11}) for general connected symmetric systems. Actually, Theorem \ref{main0} can be viewed as an application of the method involved to obtain Theorem 2.5 in \cite{WZ11}. But different from the general case, Theorem \ref{main0} determines all the exact structures when the bound in (\ref{bound1}) is attained.
\end{rem}

\begin{thm}\label{main1}
For any $p\geq 2$, let $n_i, t_i, s_i$ be positive integers satisfying $n_i\geq s_i+t_i+1$, $2\leq s_i,t_i\leq \frac{n_i}{2}$ for every $i\in[p]$ and $n_i\leq\frac{7}{4}n_j$ for all distinct $i,j\in [p]$. If $\prod_{i\in [p]}{n_i\choose s_i}\geq \prod_{i\in [p]}{n_i\choose t_i}$ and $\mathcal{A}\subseteq\prod_{i\in [p]}{[n_i]\choose t_i},~\mathcal{B}\subseteq\prod_{i\in [p]}{[n_i]\choose s_i}$ are non-empty cross-intersecting families, then
\begin{equation}\label{eq02}
|\mathcal{A}|+|\mathcal{B}|\leq \prod_{i\in [p]}{n_i\choose s_i}-\prod_{i\in [p]}{{n_i-t_i}\choose s_i}+1,
\end{equation}
and the bound is attained if and only if the following holds:
\begin{itemize}
  {\item[(i)] $\prod_{i\in [p]}{n_i\choose s_i}\geq \prod_{i\in [p]}{n_i\choose t_i}$, $\mathcal{A}=\{A\}$ and $\mathcal{B}=\{B\in \prod_{i\in [p]}{[n_i]\choose s_i}:B\cap A\neq\emptyset\}$ for some $A\in \prod_{i\in [p]}{[n_i]\choose t_i}$;}
  {\item[(ii)] $\prod_{i\in [p]}{n_i\choose s_i}= \prod_{i\in [p]}{n_i\choose t_i}$, $\mathcal{B}=\{B\}$ and $\mathcal{A}=\{A\in \prod_{i\in [p]}{[n_i]\choose t_i}:B\cap A\neq \emptyset\}$ for some $B\in \prod_{i\in [p]}{[n_i]\choose s_i}$.}
\end{itemize}
\end{thm}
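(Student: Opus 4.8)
The plan is to reduce Theorem~\ref{main1} to the single-part result of Frankl--Tokushige (Theorem~\ref{FT92}) by treating the product $\prod_{i\in[p]}{[n_i]\choose t_i}$ and $\prod_{i\in[p]}{[n_i]\choose s_i}$ as uniform families inside a single ground set of size $N=\sum_{i\in[p]}n_i$. Writing $t=\sum t_i$ and $s=\sum s_i$, a member of $\mathcal{A}$ is a $t$-set and a member of $\mathcal{B}$ is an $s$-set, and cross-intersection in the multi-part sense is \emph{weaker} than ordinary cross-intersection (two sets may meet in one part even if they are disjoint in another). So the naive direct application does not work, and the main point is to exploit the product structure. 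First I would set up a compression/shifting argument part by part: for a fixed part $i$, replace $\mathcal{A}$ by the family obtained by applying all shifts $\sigma_{uv}$ with $u,v\in[n_i]$ to the $i$-th coordinate of every member, and similarly for $\mathcal{B}$. One has to check that shifting within a single part preserves (a) the sizes $|\mathcal{A}|$, $|\mathcal{B}|$, (b) the property that $\mathcal{A},\mathcal{B}$ remain cross-intersecting in the multi-part sense, and (c) each family stays inside its product of uniform families. Claim (b) is the delicate one: if $A\sqcup(\text{rest})$ and $B\sqcup(\text{rest})$ meet in part $i$ after shifting, fine; if not, they met in some part $j\ne i$ before shifting, and that part is untouched. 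The standard case analysis of shifting (when does $\sigma_{uv}$ actually move a set) goes through coordinatewise.

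Second, after full compression in all parts, I would analyze the compressed pair. The heart of the argument is an estimate of the following shape: if $\mathcal{A}$ is nonempty and compressed, then the ``co-shadow'' type quantity $|\{B\in\prod_i{[n_i]\choose s_i}: B\cap A\ne\emptyset \text{ (multi-part) for all }A\in\mathcal{A}\}|$ is minimized, over nonempty compressed $\mathcal{A}$ of each fixed ``complexity'', by a single set $A$ with $A\cap B=\emptyset$ meaning $A_i\cap B_i=\emptyset$ for all $i$; and for a single set $A=\sqcup A_i$ the number of $B$ disjoint from $A$ in \emph{every} part is exactly $\prod_i{n_i-t_i\choose s_i}$. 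Thus for a single-set $\mathcal{A}$ we get equality in \eqref{eq02}. To handle $|\mathcal{A}|\ge 2$, I would argue that enlarging $\mathcal{A}$ past a single set forces $\mathcal{B}$ to shrink by more than $\mathcal{A}$ grows; this is where the arithmetic conditions $n_i\ge s_i+t_i+1$, $2\le s_i,t_i$, and crucially $n_i\le\frac{7}{4}n_j$ enter — they guarantee the relevant binomial-ratio inequalities, of the form ${n_i-1\choose s_i}/{n_i\choose s_i}$ versus cross-terms across different parts, point the right way so the ``trade'' is never favorable. The condition $\prod{n_i\choose s_i}\ge\prod{n_i\choose t_i}$ decides which of $\mathcal{A},\mathcal{B}$ plays the role of the ``small'' family and accounts for the asymmetry between (i) and (ii).

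Third, for the characterization of equality I would track when each inequality in the above chain is tight. Compression arguments only give that \emph{some} compressed pair attains the bound; to recover the original extremal families one runs the standard reversal: if the bound is attained, then it is attained after compression, the compressed extremal pair must be $(\{A_0\},\{B: B\cap A_0\ne\emptyset\})$ with $A_0$ the ``initial'' $t$-set in each part (or the mirror case when the two products have equal size), and then one shows an uncompression step can only preserve extremality if it was an isomorphism, so the original $\mathcal{A}$ was a single set $\{A\}$ and $\mathcal{B}$ its full multi-part neighborhood. Multi-part intersecting is needed to rule out the ``$\mathcal{B}$ large, $\mathcal{A}$ tiny but not a single set'' configurations, again via the strict binomial inequalities.

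The main obstacle I anticipate is the second step: proving that a nonempty compressed $\mathcal{A}$ with more than one element cannot do better than a single optimally placed set. In the single-part case this is where Frankl--Tokushige do real work, and in the multi-part setting one must simultaneously control how the neighborhood $\mathcal{A}_{\mathcal B}$ behaves across $p$ different coordinates. I would try to induct on $p$ (peeling off one part, applying Theorem~\ref{FT92} as the base case $p=1$ together with the product/fibering structure), or alternatively induct on $\sum_i(n_i - s_i - t_i)$ via a down-shift in a single part, using the hypothesis $n_i\le\frac{7}{4}n_j$ precisely to keep the inductive inequality valid; verifying that these binomial estimates survive the induction is the part most likely to require care.
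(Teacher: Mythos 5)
Your plan diverges from the paper (which encodes the problem as a part-transitive bipartite graph $G(\mathcal{X},\mathcal{Y})$ under $\prod_i S_{n_i}$ and deduces the bound from Theorem~\ref{key01} after classifying imprimitive subsets and fragments), and the shifting set-up in your first step is fine: a shift $\sigma_{uv}$ inside one part does preserve multi-part cross-intersection by the argument you sketch. But the proposal has a genuine gap exactly at the step you flag as the ``main obstacle'': you never prove that a compressed nonempty $\mathcal{A}$ with $|\mathcal{A}|\ge 2$ forces $|\mathcal{A}|+|\mathcal{B}|$ strictly below the bound, you only assert that ``binomial-ratio inequalities point the right way.'' This is not a routine verification, because in the multi-part setting there are product-type configurations that are stable under coordinatewise compression and genuinely compete with the single-set construction: for instance $\mathcal{A}=\{A_1\}\times{[n_2]\choose t_2}$ together with $\mathcal{B}=\{B_1:B_1\cap A_1\ne\emptyset\}\times{[n_2]\choose s_2}$. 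Remark~\ref{condition_of_main1} shows that such pairs actually \emph{beat} the bound~(\ref{eq02}) once $n_2>\frac74 n_1$ (or once $s_i\le n_i/2$ fails), so any correct argument must carry out a quantitative comparison in which the hypotheses $n_i\ge s_i+t_i+1$, $s_i,t_i\le n_i/2$ and $n_i\le\frac74 n_j$ enter with explicit constants; this is precisely the content of the paper's Claim~\ref{size estamitae} (the estimates of $\beta_1+\beta_2$ and $\frac{\eta_1+\eta_2}{1+\delta}$ for imprimitive sets $\prod_{i\in T_1}\{A_i,\bar A_i\}\times\prod_{i\in T_2}\{A_i\}\times\prod_{i\notin T_1\sqcup T_2}{[n_i]\choose t_i}$), and your outline contains no substitute for it.

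Two further points make the proposed reductions unreliable as stated. First, the induction on $p$ by ``peeling off one part'' does not fiber: fixing the configuration in parts $2,\dots,p$ and restricting $\mathcal{A},\mathcal{B}$ to a fiber does not yield cross-intersecting families in part $1$, since two members may meet only in the frozen parts, so Theorem~\ref{FT92} cannot simply be invoked on fibers; the interaction across parts is again the whole difficulty. Second, the equality characterization needs more than ``uncompression preserves extremality only if it is an isomorphism'': when $\prod_i{n_i\choose s_i}=\prod_i{n_i\choose t_i}$ there are near-extremal candidates of the form $\prod_{i\ne j}{[n_i]\choose t_i}\times\{A_j\in{[n_j]\choose 2}:1\in A_j\}$ paired with their complements, and ruling these out is a nontrivial arithmetic step (in the paper, equation~(\ref{eq05}) and the polynomial $H(x)$, again using $n_i\le\frac74 n_j$). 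Until you supply the quantitative lemma excluding all product-type competitors under the stated hypotheses, the argument is a programme rather than a proof.
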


\begin{rem}\label{condition_of_main1}
The restrictions $s_i, t_i\leq \frac{n_i}{2}$  for every $i\in[p]$ and $n_i\leq\frac{7}{4}n_j$ for all distinct $i,j\in [p]$ in Theorem~\ref{main1} are necessary.

When $s_i, t_i\leq \frac{n_i}{2}$ is violated, for example, taking $n_1=n_2=18$, $(s_1,t_1)=(15,2)$ and $(s_2,t_2)=(2,3)$, we have $s_1>\frac{n_1}{2}$ and ${n_1\choose s_1}\cdot{n_2\choose s_2}={n_1\choose t_1}\cdot{n_2\choose t_2}$. Set $\mathcal{A}=\{A_1\}\times{[n_2]\choose t_2}$ for some 2-subset $A_1\subseteq [n_1]$ and $\mathcal{B}=\{B_1\in {[n_1]\choose s_1}:B_1\cap A_1\neq\emptyset\}\times{[n_2]\choose s_2}$. Then we have $|\mathcal{A}|+|\mathcal{B}|>{n_1\choose s_1}\cdot{n_2\choose s_2}-{{n_1-t_1}\choose s_1}\cdot{{n_2-t_2}\choose s_2}+1$. 

As for the restriction $n_i\leq\frac{7}{4}n_j$, the constant $\frac{7}{4}$ here might not be tight, but the quantities of $n_i,n_j$ for distinct $i,j\in[p]$ need to be very close. For example, taking $n_1=5,n_2=12$ and $(s_1,t_1)=(s_2,t_2)=(2,2)$, we have $n_2>\frac{7}{4}n_1$ and ${n_1\choose s_1}\cdot{n_2\choose s_2}={n_1\choose t_1}\cdot{n_2\choose t_2}$. Similarly, set $\mathcal{A}=\{A_1\}\times{[n_2]\choose t_2}$ for some 2-subset $A_1\subseteq [n_1]$ and $\mathcal{B}=\{B_1\in {[n_1]\choose s_1}:B_1\cap A_1\neq\emptyset\}\times{[n_2]\choose s_2}$. Then we have $|\mathcal{A}|+|\mathcal{B}|>{n_1\choose s_1}\cdot{n_2\choose s_2}-{{n_1-t_1}\choose s_1}\cdot{{n_2-t_2}\choose s_2}+1$.

The families $\mathcal{A}$ and $\mathcal{B}$ we constructed here are closely related to imprimitive subsets of ${[n_1]\choose t_1}\times{[n_2]\choose t_2}$, which we will discuss later in Section 2.2 and Section 4.
\end{rem}

We shall introduce some results about the independent sets of vertex-transitive graphs and their direct products in the next section, and prove Theorem~\ref{main0} in Section 3, Theorem~\ref{main1} in Section 4. In Section 5, we will conclude the paper and discuss some remaining problems. For the convenience of the proof, if there is no confusion, we will denote $\prod_{i\in[p]}A_i$ as the subset $\bigsqcup_{i\in[p]}A_i\subseteq \bigsqcup_{i\in[p]}S_i$ in the rest of the paper.

\section{Preliminary results}

\subsection{Independent sets of vertex-transitive graphs}

Given a finite set $X$, for every $A\subseteq X$, denote $\bar{A}=X\setminus A$. For a simple graph $G=G(V,E)$, denote $\alpha(G)$ as the independent number of $G$ and $I(G)$ as the set of all maximum independent sets of $G$. For $v\in V(G)$, define the neighborhood $N_G(v)=\{u\in V(G):(u,v)\in E(G)\}$. For a subset $A\subseteq V(G)$, write $N_G(A)=\{b\in V(G):(a,b)\in E(G)$ for some $a\in A\}$ and $N_G[A]=A\cup N_G(A)$, if there is no confusion, we denote them as $N(A)$ and $N[A]$ for short respectively.

A graph $G$ is said to be vertex-transitive if its automorphism group $\Gamma(G)$ acts transitively upon its vertices. As a corollary of the ``No-Homomorphism'' lemma for vertex-transitive graphs in~\cite{albertson1985homomorphisms}, Cameron and Ku~\cite{CK03} proved the following theorem.

\begin{thm}\label{CK}\emph{(\cite{CK03})}
Let $G$ be a vertex-transitive graph and $B$ a subset of $V(G)$. Then any independent set $S$ in $G$ satisfies that $\frac{|S|}{|V(G)|}\leq\frac{\alpha(G[B])}{|B|}$, equality implies that $|S\cap B|=\alpha(G[B])$.
\end{thm}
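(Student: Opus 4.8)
The plan is to exploit vertex-transitivity through an averaging argument over the automorphism group $\Gamma(G)$. The first step is to record the elementary counting fact that transitivity supplies: for any two vertices $v,w\in V(G)$, the number of automorphisms $\sigma\in\Gamma(G)$ with $\sigma(v)=w$ equals $|\Gamma(G)|/|V(G)|$. This follows from the orbit–stabilizer theorem, since transitivity forces the orbit of $v$ to be all of $V(G)$, and all point stabilizers are conjugate, hence of equal size; the set $\{\sigma:\sigma(v)=w\}$ is a coset of the stabilizer of $v$, so it has size $|\Gamma(G)|/|V(G)|$ independently of $v$ and $w$.

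Next, fix an independent set $S$ of $G$ and the prescribed subset $B\subseteq V(G)$, and double-count the quantity $\sum_{\sigma\in\Gamma(G)}|\sigma(S)\cap B|$. Writing it as $\sum_{v\in S}\sum_{w\in B}\bigl|\{\sigma:\sigma(v)=w\}\bigr|$ and invoking the counting fact from the first step evaluates it exactly to $|S|\cdot|B|\cdot|\Gamma(G)|/|V(G)|$. On the other hand, since automorphisms carry non-edges to non-edges, each image $\sigma(S)$ is again an independent set of $G$, so $\sigma(S)\cap B$ is an independent set of the induced subgraph $G[B]$, whence $|\sigma(S)\cap B|\le\alpha(G[B])$. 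Summing this over the $|\Gamma(G)|$ automorphisms gives $\sum_{\sigma}|\sigma(S)\cap B|\le|\Gamma(G)|\cdot\alpha(G[B])$. Comparing the two estimates yields $|S|\,|B|/|V(G)|\le\alpha(G[B])$, which rearranges to the asserted inequality $|S|/|V(G)|\le\alpha(G[B])/|B|$.

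For the equality statement, suppose $|S|/|V(G)|=\alpha(G[B])/|B|$. Then the chain of inequalities above must be tight, i.e.\ $\sum_{\sigma\in\Gamma(G)}|\sigma(S)\cap B|=|\Gamma(G)|\cdot\alpha(G[B])$. As this is a sum of $|\Gamma(G)|$ terms, each bounded above by $\alpha(G[B])$, every individual term must equal $\alpha(G[B])$; specializing to $\sigma=\mathrm{id}$ gives $|S\cap B|=\alpha(G[B])$, as claimed.

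I do not expect a serious obstacle here: the whole argument is a clean instance of double counting, and the only delicate point is the orbit–stabilizer computation in the first step, which is precisely where genuine vertex-transitivity (as opposed to mere regularity) is used. One could alternatively derive the inequality from the No-Homomorphism Lemma of Albertson and Collins applied to the inclusion $G[B]\hookrightarrow G$, together with $|S|\le\alpha(G)$; but I would favour the self-contained averaging version above, since it keeps the equality analysis transparent and avoids tracking the equality case of the No-Homomorphism Lemma separately.
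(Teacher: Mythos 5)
Your proof is correct. Note that the paper does not prove this statement at all: it is quoted from Cameron and Ku \cite{CK03}, who obtain it as a corollary of the No-Homomorphism lemma of Albertson and Collins, so there is no in-paper argument to match yours against. Your averaging computation is sound in every step: the orbit--stabilizer count $|\{\sigma:\sigma(v)=w\}|=|\Gamma(G)|/|V(G)|$ is exactly where vertex-transitivity enters, the double count of $\sum_{\sigma\in\Gamma(G)}|\sigma(S)\cap B|$ gives $|S|\,|B|\,|\Gamma(G)|/|V(G)|$, the bound $|\sigma(S)\cap B|\le\alpha(G[B])$ is justified since automorphisms preserve independence, and specializing to $\sigma=\mathrm{id}$ in the equality analysis yields $|S\cap B|=\alpha(G[B])$ (indeed your argument gives slightly more, namely $|\sigma(S)\cap B|=\alpha(G[B])$ for every automorphism $\sigma$). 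This is essentially the standard proof of the No-Homomorphism lemma specialized to the inclusion $G[B]\hookrightarrow G$, so conceptually it is the same mechanism the cited source relies on; the benefit of your self-contained version is precisely the one you identify, that the equality case falls out of the tightness of a single sum rather than having to be extracted from the equality case of the lemma quoted as a black box.
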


Using the above theorem, Zhang~\cite{zhang2011primitivity} proved the following result.
\begin{lem}\emph{(\cite{zhang2011primitivity})}\label{subgraph independent set}
Let $G$ be a vertex-transitive graph, and $A$ be an independent set of $G$, then $\frac{|A|}{|N_G[A]|}\leq\frac{\alpha(G)}{|G|}$. Equality implies that $|S\cap N_G[A]|=|A|$ for every $S\in I(G)$, and in particular $A\subseteq S$ for some $S\in I(G)$.
\end{lem}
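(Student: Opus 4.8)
\emph{Proof sketch (plan).}
The plan is to deduce the lemma from the averaging principle that underlies Theorem~\ref{CK} (the ``No-Homomorphism'' lemma): if $G$ is vertex-transitive with automorphism group $\Gamma=\Gamma(G)$, then for any $S,B\subseteq V(G)$ one has
\[
\frac{1}{|\Gamma|}\sum_{\gamma\in\Gamma}|\gamma(S)\cap B|=\frac{|S|\,|B|}{|V(G)|},
\]
because, by transitivity and the orbit--stabilizer theorem, every fixed vertex of $B$ lies in $\gamma(S)$ for exactly a $\tfrac{|S|}{|V(G)|}$-fraction of the automorphisms $\gamma$. I would apply this with $B=N_G[A]$ and with $S$ an arbitrarily chosen maximum independent set of $G$.

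The heart of the argument is a swapping estimate: for every $\gamma\in\Gamma$ the set $(\gamma(S)\setminus N_G[A])\cup A$ is independent. Indeed $\gamma(S)\setminus N_G[A]$ and $A$ are each independent, and no edge joins them, since every neighbour of a vertex of $A$ lies in $N_G(A)\subseteq N_G[A]$ and is therefore absent from $\gamma(S)\setminus N_G[A]$. As $\gamma(S)$ is again a maximum independent set, $|(\gamma(S)\setminus N_G[A])\cup A|\le|\gamma(S)|$, which rearranges to the pointwise bound $|A|\le|\gamma(S)\cap N_G[A]|$. Substituting this lower bound into the averaging identity yields $|A|\le\frac{\alpha(G)\,|N_G[A]|}{|V(G)|}$, i.e.\ $\frac{|A|}{|N_G[A]|}\le\frac{\alpha(G)}{|G|}$.

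For the equality statement, suppose $\frac{|A|}{|N_G[A]|}=\frac{\alpha(G)}{|G|}$. Then the right-hand side of the averaging identity equals $|A|$, so the average of the numbers $|\gamma(S)\cap N_G[A]|$ coincides with their common lower bound $|A|$; hence $|\gamma(S)\cap N_G[A]|=|A|$ for every $\gamma\in\Gamma$. Taking $\gamma=\mathrm{id}$, and recalling that $S$ was an arbitrary member of $I(G)$, this gives $|S\cap N_G[A]|=|A|$ for all $S\in I(G)$. Finally, for such an $S$ the swapped set $(S\setminus N_G[A])\cup A$ is independent of size $|S|-|S\cap N_G[A]|+|A|=|S|=\alpha(G)$, hence a maximum independent set containing $A$, which proves the ``in particular'' clause.

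I expect the only real obstacle to be a conceptual one: a naive application of Theorem~\ref{CK} with $B=N_G[A]$ does \emph{not} give the claim, because $\alpha(G[N_G[A]])$ can strictly exceed $|A|$ (already on $C_5$ with $|A|=1$, where $G[N_G[A]]$ is a path on three vertices). The point is that one must average the inequality $|A|\le|\gamma(S)\cap N_G[A]|$ over all of $\Gamma$ rather than examine the induced subgraph on $N_G[A]$ in isolation; once this is recognised and the ``no edge between $A$ and $\gamma(S)\setminus N_G[A]$'' observation is applied in the right direction, the remaining steps are routine bookkeeping.
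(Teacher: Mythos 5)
Your argument is correct: the averaging identity over $\Gamma$ is valid (finite vertex-transitive graph, orbit--stabilizer), the swap $(\gamma(S)\setminus N_G[A])\cup A$ is indeed independent since $A\subseteq N_G[A]$ and every neighbour of $A$ lies in $N_G[A]$, and the equality analysis (all terms of an average equal to their common lower bound, then $\gamma=\mathrm{id}$, then the swapped set realizing $\alpha(G)$ and containing $A$) goes through. However, this is a genuinely different route from the one the paper points to. The paper takes this lemma from \cite{zhang2011primitivity} and derives it, as in its own proof of the equivalent Lemma~\ref{lemmaimprimitive}, by applying Theorem~\ref{CK} not to $N_G[A]$ but to the complement $\bar{N}[A]$ of the closed neighbourhood: a maximum independent set $B$ of $G[\bar{N}[A]]$ satisfies $|B|/|\bar{N}[A]|\ge\alpha(G)/|G|$ by Theorem~\ref{CK}, and $A\cup B$ is independent, so $|A|+\frac{\alpha(G)}{|G|}(|G|-|N_G[A]|)\le\alpha(G)$, which rearranges to the claimed ratio bound; equality forces equality in Theorem~\ref{CK} for $\bar{N}[A]$, giving $|S\cap\bar{N}[A]|=\alpha(G[\bar{N}[A]])$ and hence $|S\cap N_G[A]|=|A|$ for every $S\in I(G)$, while $A\cup B$ itself is the maximum independent set containing $A$. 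Your observation that naively taking $B=N_G[A]$ in Theorem~\ref{CK} fails is exactly why the paper's route passes to $\bar{N}[A]$; your alternative instead unpacks the no-homomorphism averaging explicitly and works with $N_G[A]$ directly via the swap. What each buys: your proof is self-contained (it does not need Theorem~\ref{CK}, whose proof it essentially re-derives) and yields the equality clause for every $S\in I(G)$ in one line; the paper's route is shorter given Theorem~\ref{CK} as a black box and dovetails with how Lemma~\ref{lemmaimprimitive} and the imprimitivity discussion are phrased, since it produces the decomposition $\alpha(G)=|A|+\alpha(G[\bar{N}[A]])$ that is reused later.
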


An independent set $A$ in $G$ is said to be imprimitive if $|A|<\alpha(G)$ and $\frac{|A|}{|N[A]|}=\frac{\alpha(G)}{|G|}$, and $G$ is called IS-imprimitive if $G$ has an imprimitive independent set. Otherwise, $G$ is called IS-primitive. Note that a disconnected vertex-transitive graph $G$ is always IS-imprimitive. Hence IS-primitive vertex-transitive graphs are all connected.

The following inequality about the size of an independent set $A$ and its non-neighbors $\bar{N}[A]$ is crucial for the proof of Theorem~\ref{main0}.

\begin{lem}\label{lemmaimprimitive} Let $G$ be a vertex transitive graph, and let $A$ be an independent set of $G$. Then
\begin{flalign}
|A|+\frac{\alpha(G)}{|G|}|\bar{N}[A]|\leq\alpha(G).\nonumber
\end{flalign}
Equality holds if and only if $A=\emptyset$ or $|A|=\alpha(G)$ or $A$ is an imprimitive independent set.
\end{lem}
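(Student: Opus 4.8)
The plan is to derive the inequality directly from Lemma~\ref{subgraph independent set} together with a short counting argument, and then to analyze the equality case. First I would dispose of the trivial cases: if $A=\emptyset$ then $\bar N[A]=V(G)$ and the left-hand side equals $\frac{\alpha(G)}{|G|}|G|=\alpha(G)$, so equality holds; if $|A|=\alpha(G)$ then the left-hand side is $\alpha(G)+\frac{\alpha(G)}{|G|}|\bar N[A]|\geq \alpha(G)$, and since $A$ is a maximum independent set contained in $V(G)$ one checks $\bar N[A]=\emptyset$ must hold for the bound to be meaningful — actually more carefully, a maximum independent set $A$ satisfies $N[A]=V(G)$ by Lemma~\ref{subgraph independent set} (its hypothesis $\frac{|A|}{|N[A]|}\le \frac{\alpha(G)}{|G|}$ forces $|N[A]|\ge |G|$ when $|A|=\alpha(G)$), so $\bar N[A]=\emptyset$ and equality again holds. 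So assume $0<|A|<\alpha(G)$.

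For the main inequality, the key observation is that $G[\bar N[A]]$ is the subgraph induced on the non-neighbors of $A$, and $A$ itself together with any independent set of $G[\bar N[A]]$ forms an independent set of $G$ (since no vertex of $\bar N[A]$ is adjacent to $A$). Hence $\alpha\bigl(G[\bar N[A]]\bigr)\le \alpha(G)-|A|$. Now apply Theorem~\ref{CK} with $B=\bar N[A]$: any maximum independent set $S\in I(G)$, being an independent set of $G$, satisfies $\frac{\alpha(G)}{|G|}\le \frac{\alpha(G[\bar N[A]])}{|\bar N[A]|}$ — wait, the inequality in Theorem~\ref{CK} goes the other way, giving $\frac{\alpha(G)}{|G|}=\frac{|S|}{|V(G)|}\le \frac{\alpha(G[B])}{|B|}$ only if we use $S$ with $\frac{|S|}{|V(G)|}=\frac{\alpha(G)}{|G|}$, so indeed $\frac{\alpha(G)}{|G|}|\bar N[A]|\le \alpha\bigl(G[\bar N[A]]\bigr)\le \alpha(G)-|A|$, which rearranges to the claimed bound. (One must handle $\bar N[A]=\emptyset$ separately, where the bound reads $|A|\le\alpha(G)$, true.)

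For the equality characterization, equality forces both $\frac{\alpha(G)}{|G|}|\bar N[A]|=\alpha\bigl(G[\bar N[A]]\bigr)$ and $\alpha\bigl(G[\bar N[A]]\bigr)=\alpha(G)-|A|$. The first equality, via Lemma~\ref{subgraph independent set} applied to $A$ (noting $N_G[A]$ and $\bar N[A]$ partition $V(G)$, so $|\bar N[A]|=|G|-|N[A]|$ and the condition $\frac{\alpha(G)}{|G|}|\bar N[A]|=\alpha(G)-|A|$ becomes exactly $\frac{|A|}{|N[A]|}=\frac{\alpha(G)}{|G|}$ after substitution), is precisely the statement that $A$ is imprimitive (given $0<|A|<\alpha(G)$). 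Conversely, if $A$ is imprimitive then $\frac{|A|}{|N[A]|}=\frac{\alpha(G)}{|G|}$; substituting $|N[A]|=|G|-|\bar N[A]|$ and rearranging gives $|A|+\frac{\alpha(G)}{|G|}|\bar N[A]|=\frac{|A|}{|N[A]|}\cdot|N[A]| + \frac{\alpha(G)}{|G|}|\bar N[A]| = \frac{\alpha(G)}{|G|}(|N[A]|+|\bar N[A]|)=\alpha(G)$, so equality holds. I expect the main obstacle to be bookkeeping around the degenerate cases ($A=\emptyset$, $|A|=\alpha(G)$, and $\bar N[A]=\emptyset$) and making sure the chain of equalities in the imprimitive case is reversible; the substantive content is just the two-line sandwich $\frac{\alpha(G)}{|G|}|\bar N[A]|\le \alpha(G[\bar N[A]])\le \alpha(G)-|A|$.
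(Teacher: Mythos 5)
Your proposal is correct and follows essentially the same route as the paper: the sandwich $\frac{\alpha(G)}{|G|}|\bar N[A]|\le \alpha\bigl(G[\bar N[A]]\bigr)\le \alpha(G)-|A|$ is exactly the paper's argument (take a maximum independent set $B$ of $G[\bar N[A]]$, note $A\cup B$ is independent, and apply Theorem~\ref{CK} to $B=\bar N[A]$), and the equality case reduces, as in the paper, to the algebraic identity $\frac{|A|}{|N[A]|}=\frac{\alpha(G)}{|G|}$ via $|\bar N[A]|=|G|-|N[A]|$. Your extra care with the degenerate cases ($|A|=\alpha(G)$ forcing $\bar N[A]=\emptyset$, and $\bar N[A]=\emptyset$ in general) is fine and does not change the substance.
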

For the integrity of the paper, we include the proof here. In~\cite{WZ11}, Wang and Zhang proved the same inequality for a more generalized combinatorial structure called $symmetric$ $system$ (see~\cite{WZ11}, Corollary 2.4).

\begin{proof}
If $A=\emptyset$ or $A=\alpha(G)$, the equality trivially holds. Suppose $0<|A|<\alpha(G)$, and let $B$ be a maximal independent set in $\bar{N}[A]$, then $|B|=\alpha(\bar{N}(A))$. Clearly,  $A\cup B$ is also an independent set of $G$, thus we have $|A|+|B|\leq\alpha(G)$. By Theorem $\ref{CK}$, we obtain that $\frac{|B|}{|\bar{N}[A]|}\ge\frac{\alpha(G)}{|G|}$. Therefore,
\begin{flalign}
|A|+\frac{\alpha(G)}{|G|}|\bar{N}[A]|\leq|A|+|B|\leq\alpha(G),\nonumber
\end{flalign}
the equality holds when $\alpha(G)=|A|+\frac{\alpha(G)}{|G|}|\bar{N}[A]|=|A|+\frac{\alpha(G)}{|G|}(|G|-|N[A]|)$, which leads to $\frac{|A|}{|N[A]|}=\frac{\alpha(G)}{|G|}$, i.e., $A$ is an imprimitive independent set.
\end{proof}

Let $X$ be a finite set, and $\Gamma$ a group acting transitively on $X$. Then $\Gamma$ is said to be primitive on $X$ if it preserves no nontrivial partition of $X$. A vertex-transitive graph $G$ is called primitive if the automorphism $\text{Aut}(G)$ is primitive on $V(G)$. To show the connection between the primitivity and the IS-primitivity of a vertex-transitive graph $G$, Zhang (see Proposition 2.4 in \cite{zhang2011primitivity}) proved that if $G$ is primitive, then it must be IS-primitive. As a consequence of this result, Wang and Zhang~\cite{WZ11} derived the IS-primitivity of the Kneser graph.



\begin{prop}\emph{(\cite{WZ11})}\label{imprimitive Kneser graph}
The Kneser graph $KG_{n,k}$ is IS-primitive except for $n=2k\ge 4$.
\end{prop}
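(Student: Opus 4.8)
The plan is to establish that $KG_{n,k}$ is primitive (in the sense that $\mathrm{Aut}(KG_{n,k})$ acts primitively on the vertex set ${[n]\choose k}$) precisely when $n\ne 2k$, and then invoke Zhang's result (Proposition 2.4 of~\cite{zhang2011primitivity}) that primitivity implies IS-primitivity. Recall that by the Erd\H{o}s--Ko--Rado theorem itself, for $n>2k$ the automorphism group of $KG_{n,k}$ is exactly the symmetric group $S_n$ acting on $k$-subsets (this is a classical fact; it also follows because the complement of $KG_{n,k}$ is the Johnson/Kneser-type graph whose automorphism group is $S_n$), while for $n=2k$ the graph $KG_{2k,k}$ is a perfect matching on ${2k\choose k}$ vertices (each $k$-set is adjacent only to its complement), so it is disconnected and hence automatically IS-imprimitive, which accounts for the exceptional case. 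So the first step is to dispose of $n=2k$ by the matching observation, and the remaining work is the case $n>2k$ (the case $n<2k$ is not considered since $KG_{n,k}$ is then edgeless, but under the standing hypothesis $n\geq 2k$ only $n=2k$ and $n>2k$ arise).

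Next I would show that $S_n$ acts primitively on ${[n]\choose k}$ when $n>2k$ (equivalently, whenever $1\le k\le n-1$ and $k\ne n/2$; the full symmetry $k\leftrightarrow n-k$ is irrelevant here because we only need $k<n/2$). The standard criterion is that a transitive action of $G$ on cosets $G/H$ is primitive if and only if $H$ is a maximal subgroup of $G$. Here the point stabilizer of a $k$-set is $S_k\times S_{n-k}$ (as a subgroup of $S_n$), so it suffices to prove $S_k\times S_{n-k}$ is a maximal subgroup of $S_n$ for $k\ne n/2$. This is a well-known fact (it appears, e.g., in the classification of maximal subgroups of $S_n$ — the intransitive maximal subgroups are exactly the $S_k\times S_{n-k}$ with $k\ne n-k$), and can be proved directly: any subgroup strictly containing $S_k\times S_{n-k}$ must move some element of the $k$-part into the $(n-k)$-part, and using double transitivity of $S_{n-k}$ on the larger part one generates a $3$-cycle or a transposition that, together with $S_k\times S_{n-k}$, generates all of $S_n$. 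Alternatively one can argue combinatorially that a nontrivial block system on ${[n]\choose k}$ invariant under $S_n$ cannot exist when $n>2k$: a block $B$ containing a fixed $k$-set $A_0$ would, by transitivity of $\mathrm{Stab}(A_0)$ on $k$-sets at each fixed intersection size with $A_0$, have to be a union of "intersection shells," and one checks that closure under the full group forces $B$ to be either $\{A_0\}$ or everything — the case $n=2k$ being the one place where the single shell $\{A_0,\bar A_0\}$ forms a proper block.

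Having shown $KG_{n,k}$ is primitive for $n>2k$, the conclusion $KG_{n,k}$ is IS-primitive follows immediately from Zhang's implication "primitive $\Rightarrow$ IS-primitive." I expect the main obstacle to be the clean verification of maximality of $S_k\times S_{n-k}$ in $S_n$ (or, equivalently, the nonexistence of a nontrivial $S_n$-invariant partition of ${[n]\choose k}$) while correctly isolating why $n=2k$ is genuinely exceptional rather than a proof artifact — the point being that at $n=2k$ the complementation map $A\mapsto\bar A$ is an extra automorphism whose orbits $\{A,\bar A\}$ furnish a proper block system, which is exactly the obstruction. Since all of this is either classical or available in~\cite{zhang2011primitivity} and~\cite{WZ11}, a short proof citing these inputs, plus the matching observation for $n=2k$, should suffice; I would keep the write-up to a couple of lines pointing to Proposition 2.4 of~\cite{zhang2011primitivity} and the maximality of intransitive subgroups of the symmetric group.
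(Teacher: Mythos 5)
Your route is essentially the paper's: for $n>2k$ the stabilizer of a $k$-set is $S_k\times S_{n-k}$, a maximal subgroup of $S_n$, so by the classical criterion (Theorem~\ref{primitivity}) the action of $S_n$ on ${[n]\choose k}$ is primitive, and Zhang's implication ``primitive $\Rightarrow$ IS-primitive'' (Proposition 2.4 of \cite{zhang2011primitivity}) gives IS-primitivity; this is exactly how \cite{WZ11} derives the proposition. The detour through $\mathrm{Aut}(KG_{n,k})=S_n$ via Erd\H{o}s--Ko--Rado is unnecessary (primitivity of a transitive subgroup already rules out any nontrivial invariant partition for the full automorphism group), but harmless.

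There is, however, one concrete error at the boundary: you declare every $n=2k$ exceptional, asserting that $KG_{2k,k}$ is a disconnected perfect matching and hence IS-imprimitive, ``which accounts for the exceptional case.'' That is correct only for $k\ge 2$. For $n=2k=2$ the graph $KG_{2,1}$ is $K_2$, which is connected, and it is IS-primitive: its only nonempty independent sets are the two singletons, which are maximum, so no imprimitive independent set can exist (equivalently, any transitive action on two points is primitive, the trivial stabilizer being maximal in $S_2$, so your dichotomy ``primitive precisely when $n\ne 2k$'' also fails at $n=2$). This is precisely why the exception in the statement reads $n=2k\ge 4$ rather than $n=2k$, and the distinction is not cosmetic: the paper explicitly uses the IS-primitivity of $KG_{2,1}$ later, in the proof of Proposition~\ref{note1}. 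The repair is one line — treat $n=2k=2$ separately as above, and reserve the matching/disconnectedness argument for $n=2k\ge 4$ — but as written your proof establishes a statement that misclassifies $KG_{2,1}$.
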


In order to deal with the multi-part case, we also need the results about the independent sets in direct products of vertex-transitive graphs. Let $G$ and $H$ be two graphs, the direct product $G\times H$ of $G$ and $H$ is defined by
\begin{flalign*}
V(G\times H)=V(G)\times V(H),
\end{flalign*}
and
\begin{flalign*}
E(G\times H)=\{[(u_1,v_1),(u_2,v_2)]:(u_1,u_2)\in E(G)\text{ and }(v_1,v_2)\in E(H)\}.
\end{flalign*}
Clearly, $G\times H$ is a graph with $\text{Aut}(G)\times\text{Aut}(H)$ as its automorphism group. And, if $G,H$ are vertex-transitive, then $G\times H$ is also vertex-transitive under the actions of $\text{Aut}(G)\times\text{Aut}(H)$. We say the direct product $G\times H$ is MIS-normal (maximum-independent-set-normal) if every maximum independent set of $G\times H$ is a preimage of an independent set of one factor under projections.

In \cite{zhang2012independent}, Zhang obtained the exact structure of the maximal independent set of $G\times H$.

\begin{thm}\emph{(\cite{zhang2012independent})}\label{tensor product} Let $G$ and $H$ be two vertex-transitive graphs with $\frac{\alpha(G)}{|G|}\ge\frac{\alpha(H)}{|H|}$. Then
\begin{flalign}
\alpha(G\times H)=\alpha(G)|H|,\nonumber
\end{flalign}
and exactly one of the following holds:
\begin{itemize}
\item[(i)] $G\times H$ is MIS-normal;
\item[(ii)] $\frac{\alpha(G)}{|G|}=\frac{\alpha(H)}{|H|}$ and one of $G$ or $H$ is IS-imprimitive;
\item[(iii)] $\frac{\alpha(G)}{|G|}>\frac{\alpha(H)}{|H|}$ and $H$ is disconnected.
\end{itemize}
\end{thm}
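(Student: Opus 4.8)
The plan is to treat the two assertions separately: first the numerical value $\alpha(G\times H)=\alpha(G)|H|$, then the structural trichotomy. For the value, the lower bound is immediate: fix a maximum independent set $J$ of $G$ and take its $\pi_G$-preimage $J\times V(H)$. Since $(u_1,v_1)\sim(u_2,v_2)$ in $G\times H$ forces $u_1\sim u_2$ in $G$ and $J$ is independent, $J\times V(H)$ is independent, giving $\alpha(G\times H)\ge\alpha(G)|H|$; symmetrically $\alpha(G\times H)\ge\alpha(H)|G|$, and the hypothesis $\alpha(G)/|G|\ge\alpha(H)/|H|$ makes $\alpha(G)|H|$ the larger of the two. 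The whole difficulty is the reverse inequality, namely that the independence ratio $\alpha(G\times H)/(|G||H|)$ does not exceed $r:=\alpha(G)/|G|$. I would approach this through the fractional relaxation: for a vertex-transitive graph the independence ratio equals the reciprocal of the fractional chromatic number, and both projections $\pi_G,\pi_H$ are graph homomorphisms, so the No-Homomorphism lemma of~\cite{albertson1985homomorphisms} (which underlies Theorem~\ref{CK}) already yields the $\ge$ direction. The matching $\le$ is the content of the fractional-Hedetniemi-type fact that the fractional chromatic number of a categorical product equals the minimum of the two factors' values; because the optimal ratio $r$ is rational and attained, one can certify it by a fractional clique cover pulled back along $\pi_G$, giving $\alpha(G\times H)\le r|G||H|=\alpha(G)|H|$. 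I expect this upper bound to be the main analytic obstacle, since a naive application of Theorem~\ref{CK} to the product with an edgeless or single-edge slice is either vacuous or too lossy.

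With the value established, let $S$ be an arbitrary maximum independent set and record its fibres $S_v=\{u\in V(G):(u,v)\in S\}$. The adjacency rule shows that whenever $vv'\in E(H)$ there is no $G$-edge between $S_v$ and $S_{v'}$, i.e. $S_{v'}\subseteq V(G)\setminus N_G(S_v)$, whereas fibres over a single vertex of $H$ are internally unconstrained. Summing gives $\sum_v|S_v|=\alpha(G)|H|$, so the average fibre has size exactly $\alpha(G)$. MIS-normality asserts that either $S=J\times V(H)$ for a maximum independent $J$ of $G$ (all fibres equal to $J$) or $S=V(G)\times J'$ for a maximum independent $J'$ of $H$ (every fibre in $\{\emptyset,V(G)\}$). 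The goal of the structural part is to prove that if $S$ is neither of these patterns, then we are forced into case (ii) or case (iii).

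The core of the argument is an equality analysis carried out slice by slice via Theorem~\ref{CK}, Lemma~\ref{subgraph independent set} and Lemma~\ref{lemmaimprimitive}. For each edge $vv'$ of $H$ the pair $(S_v,S_{v'})$ is a cross-independent pair in the vertex-transitive graph $G$, and extremality of $S$ should push the slack in these lemmas to equalities; the equality clause of Lemma~\ref{lemmaimprimitive}, $|A|+\tfrac{\alpha(G)}{|G|}|\bar N[A]|=\alpha(G)$, then forces each nonempty proper fibre $S_v$ to be either a maximum independent set of $G$ or an imprimitive independent set of $G$ (and symmetrically in $H$). I would then split on the ratio. If $\alpha(G)/|G|>\alpha(H)/|H|$, a fibre equal to $V(G)$ forces all its $H$-neighbours to be empty and belongs to the strictly non-maximal $\pi_H$-pattern, so only the $\pi_G$-pattern survives; the one way to exhibit a non-constant family of fibres, each a maximum independent set of $G$ respecting the cross-edge constraint, is for $H$ to split so that different choices can be made on different components — that is, $H$ is disconnected, which is case (iii). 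If instead $\alpha(G)/|G|=\alpha(H)/|H|$, both projection patterns are maximal and can be interpolated, and a genuinely non-normal $S$ then forces an imprimitive independent set in $G$ or in $H$, i.e. one factor is IS-imprimitive, which is case (ii).

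Finally I would verify exhaustiveness and exclusivity so that \emph{exactly} one alternative holds. For the converse to (iii): if $r>s$ and $H$ has components $H_1,\dots,H_t$, choosing a possibly different maximum independent set $J_i$ of $G$ on each $V(G)\times V(H_i)$ produces a maximum independent set of total size $\sum_i\alpha(G)|H_i|=\alpha(G)|H|$ which is not a single preimage, so $G\times H$ is not MIS-normal. For the converse to (ii): equal ratios make both $J\times V(H)$ and $V(G)\times J'$ maximal, and an imprimitive independent set $A$ of a factor, whose closure $N_G[A]$ carries the critical ratio by Lemma~\ref{lemmaimprimitive}, can be substituted into part of the fibres to manufacture a maximum independent set that matches neither pattern. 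Since (ii) and (iii) impose incompatible ratio conditions and each destroys MIS-normality, the three cases partition all possibilities. Beyond the value's upper bound, the secondary obstacle I anticipate is precisely this bookkeeping step — converting the slice-wise equality conditions into the clean dichotomy ``$H$ disconnected (strict ratio) versus one factor IS-imprimitive (equal ratio)'' — and it is the vertex-transitivity of both factors, exploited through Theorem~\ref{CK} and Lemma~\ref{lemmaimprimitive}, that makes this last argument go through.
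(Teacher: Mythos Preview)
The paper does not give its own proof of this theorem: it is quoted from Zhang's paper as a preliminary result, and the only argument supplied is the one-line remark after the statement exhibiting a non-MIS-normal maximum independent set in case~(ii). So there is nothing on the paper's side to compare your outline against.

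That said, one substantive comment on your sketch. You correctly identify the upper bound $\alpha(G\times H)\le\alpha(G)\,|H|$ as the crux, but the proposed certification --- ``a fractional clique cover pulled back along $\pi_G$'' --- runs in the wrong direction. Pulling a fractional colouring of $G$ back along the homomorphism $\pi_G\colon G\times H\to G$ yields $\chi_f(G\times H)\le\chi_f(G)$, which for vertex-transitive graphs translates to the \emph{lower} bound $\alpha(G\times H)\ge\alpha(G)\,|H|$ that you already had from $J\times V(H)$. To obtain the matching upper bound via fractional methods you genuinely need the fractional Hedetniemi theorem (Zhu), which is an independent nontrivial input and not a consequence of any pull-back; as written, this step is a gap rather than a sketch. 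Your fibre-by-fibre analysis in the second half, using Lemma~\ref{subgraph independent set} and Lemma~\ref{lemmaimprimitive} to force imprimitive fibres in the non-MIS-normal case, is closer in spirit to how Zhang's original argument actually establishes both the value and the trichotomy simultaneously.
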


In fact if $\frac{\alpha(G)}{|G|}=\frac{\alpha(H)}{|H|}$  and $A$ is an imprimitive independent set of $G$, then for every $I\in I(H)$, it is easy to see that $S=(A\times V(H))\cup (\bar{N}[A]\times I)$ is an independent set of $G\times H$ with size $\alpha(G)|H|$.

Zhang \cite{zhang2011primitivity} also investigated the relationship between the graph primitivity and the structures of the maximum independent sets in direct products of vertex-transitive graphs.
\begin{thm}\emph{(\cite{zhang2011primitivity})}\label{normal-imprimitive} Suppose $G\times H$ is MIS-normal and $\frac{\alpha(H)}{|H|}\leq\frac{\alpha(G)}{|G|}$. If $G\times H$ is IS-imprimitive, then one of the following two possible cases holds:
 \begin{itemize}
\item[(i)] $\frac{\alpha(G)}{|G|}=\frac{\alpha(H)}{|H|}$ and one of them is IS-imprimitive or both $G$ and $H$ are bipartite;
\item[(ii)] $\frac{\alpha(G)}{|G|}>\frac{\alpha(H)}{|H|}$ and $G$ is IS-imprimitive.
 \end{itemize}
\end{thm}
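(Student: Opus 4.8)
The plan is to begin with an imprimitive independent set $A$ of $G\times H$ and promote it to a maximum independent set. Since $A$ is imprimitive we have $0<|A|<\alpha(G\times H)=\alpha(G)|H|$ and equality in Lemma~\ref{lemmaimprimitive} (applied to the vertex-transitive graph $G\times H$); taking $B$ to be a maximal independent set inside $\bar{N}[A]$, the proof of that lemma gives that $S:=A\sqcup B$ is a maximum independent set with $A\subsetneq S$. By MIS-normality $S$ is the preimage of an independent set of a factor, and because $\frac{\alpha(G)}{|G|}\ge\frac{\alpha(H)}{|H|}$ the only possibilities are $S=I_0\times V(H)$ with $I_0\in I(G)$, or---only when the two ratios are equal---$S=V(G)\times J_0$ with $J_0\in I(H)$. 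In case (ii) only the first type can occur, while in case (i) the two types are interchanged by swapping $G$ and $H$; so it suffices to analyse $S=I_0\times V(H)$.

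Set $\rho=\frac{\alpha(G)}{|G|}$, $P=\pi_G(A)\subseteq I_0$ and $A_u=\{v:(u,v)\in A\}$. Using that $I_0$ is independent in $G$, I would first check the exact fibrewise identity $N_{G\times H}[A]=A\sqcup\bigsqcup_{u\in N_G(P)\setminus P}\{u\}\times N_H(C_u)$, where $C_u=\bigcup_{u'\in N_G(u)\cap P}A_{u'}$; imprimitivity of $A$ then becomes $\sum_{u\in N_G(P)\setminus P}|N_H(C_u)|=\frac{1-\rho}{\rho}|A|$. The decisive step is to re-index by the $H$-coordinate: with $P_v=\{u'\in P:v\in A_{u'}\}$, each $P_v$ lies in the independent set $I_0$ and is therefore independent in $G$, and a direct count gives $\sum_{u}|C_u|=\sum_{v\in V(H)}|N_G(P_v)|$ together with $\sum_v|P_v|=|A|$. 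Feeding in the expansion bound $|N_H(C)|\ge|C|$ for the regular graph $H$ (via Birkhoff--von Neumann and Hall's theorem) and the inequality $|N_G(P_v)|\ge\frac{1-\rho}{\rho}|P_v|$, which is a rearrangement of Lemma~\ref{lemmaimprimitive} for the independent set $P_v$ in $G$, produces the chain $\frac{1-\rho}{\rho}|A|=\sum_u|N_H(C_u)|\ge\sum_u|C_u|=\sum_v|N_G(P_v)|\ge\frac{1-\rho}{\rho}|A|$, forcing equality everywhere.

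Equality in the rearranged Lemma~\ref{lemmaimprimitive} for each $P_v$ means, by its equality clause, that $P_v$ is empty, a maximum independent set of $G$, or an imprimitive independent set of $G$. If some $P_v$ is imprimitive, then $G$ is IS-imprimitive and we are done. Otherwise every $P_v\in\{\emptyset,I_0\}$, so $A=I_0\times V_1$ is a box with $\emptyset\ne V_1\subsetneq V(H)$, and the remaining equality $|N_H(C_u)|=|C_u|$ reads $|N_H(V_1)|=|V_1|$; a short edge-count ($e(V_1,N_H(V_1))=\deg\cdot|V_1|=\deg\cdot|N_H(V_1)|$) shows that a connected regular graph with such a proper tight set is bipartite with $V_1$ as one side, a disconnected $H$ being IS-imprimitive and hence covered by the first alternative. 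I would close using the classical fact that a vertex-transitive graph $K$ with an edge satisfies $\frac{\alpha(K)}{|K|}\le\tfrac12$, with equality iff $K$ is bipartite: in case (ii), $\frac{\alpha(G)}{|G|}>\frac{\alpha(H)}{|H|}$ and $H$ bipartite would give $\frac{\alpha(G)}{|G|}>\tfrac12$, impossible, so the box alternative is void and $G$ is IS-imprimitive; in case (i), the box alternative forces $H$ bipartite, hence $\frac{\alpha(G)}{|G|}=\frac{\alpha(H)}{|H|}=\tfrac12$ and $G$ bipartite too, giving the ``both bipartite'' conclusion, and the symmetric treatment of $S=V(G)\times J_0$ completes case (i).

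The step I expect to be most delicate is the equality analysis: confirming that the fibrewise formula for $N_{G\times H}[A]$ and the re-indexing identity $\sum_u|C_u|=\sum_v|N_G(P_v)|$ are exact, and---above all---that the tight-set equality $|N_H(V_1)|=|V_1|$ really yields bipartiteness rather than merely $|V_1|=|H|/2$. One must also dispose of the degenerate cases in which a factor has no edge (so that $G\times H$ is edgeless) and verify that interchanging $G$ and $H$ is legitimate in case (i), which is precisely where the equality $\frac{\alpha(G)}{|G|}=\frac{\alpha(H)}{|H|}$ enters.
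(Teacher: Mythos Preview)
The paper does not prove Theorem~\ref{normal-imprimitive}; it is quoted from \cite{zhang2011primitivity} as a preliminary result, so there is no in-paper argument to compare against. Your overall strategy---enlarge an imprimitive $A$ to a maximum independent set $S$ via the equality case of Lemma~\ref{lemmaimprimitive}, write $S$ as a box using MIS-normality, slice $A$ fibrewise, and force the dichotomy through Lemma~\ref{subgraph independent set} together with a Hall-type expansion bound in $H$---is sound, and the re-indexing identity $\sum_u|C_u|=\sum_v|N_G(P_v)|$ is a correct double count. Your ``delicate step'' also works: for a $d$-regular $H$ with $d\ge 1$, the equality $|N_H(V_1)|=|V_1|$ forces every arc into $N_H(V_1)$ to originate in $V_1$ and conversely, so both $V_1\cup N_H(V_1)$ and $V_1\cap N_H(V_1)$ are unions of components; when $H$ is connected this gives $V_1\cap N_H(V_1)=\emptyset$ and $V_1\cup N_H(V_1)=V(H)$, hence bipartiteness.

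There is, however, a gap in your handling of case~(ii). When the box alternative $A=I_0\times V_1$ yields $|N_H(V_1)|=|V_1|$ and $H$ happens to be disconnected, you write that this is ``covered by the first alternative''. It is not: in case~(ii) the required conclusion is that \emph{$G$} is IS-imprimitive, and the IS-imprimitivity of $H$ (which is all disconnectedness gives you) does not deliver this. The repair is to invoke Theorem~\ref{tensor product}: since exactly one of its three alternatives holds and you are assuming MIS-normality together with $\frac{\alpha(G)}{|G|}>\frac{\alpha(H)}{|H|}$, alternative~(iii) of that theorem is excluded, so $H$ must be connected. With that in hand your bipartiteness argument applies, and the contradiction $\frac{\alpha(G)}{|G|}>\tfrac12$ finishes case~(ii) as you intended.
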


As an application of Theorem~\ref{tensor product} and Theorem~\ref{normal-imprimitive}, Geng et al.~\cite{geng2012structure} showed the MIS-normality of the direct products of Kneser graphs.

\begin{thm}\emph{(\cite{geng2012structure})}\label{imprimitive product kneser graph} Given a positive integer $p$, let $n_1,n_2,\ldots,n_p$ and $k_1,k_2,\ldots,k_p$ be $2p$ positive integers with $n_i\ge 2k_i$ for $1\leq i\leq p$. Then the direct product of the Kneser graph $$\text{KG}_{n_1,k_1}\times\text{KG}_{n_2,k_2}\times\cdots\times\text{KG}_{n_p,k_p}$$ is MIS-normal except that there exist $i,j$ and $\ell$ with $n_i=2k_i\ge 4$ and $n_j=2k_j$, or $n_i=n_j=n_{\ell}=2$.
\end{thm}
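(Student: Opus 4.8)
The plan is to induct on the number of factors $p$, using Zhang's structure theorems (Theorems~\ref{tensor product} and~\ref{normal-imprimitive}) together with the following facts about a single Kneser graph $G_i:=\mathrm{KG}_{n_i,k_i}$ with $n_i\ge 2k_i$: (1) $\alpha(G_i)/|G_i|=k_i/n_i\le\frac12$ by Erd\H{o}s--Ko--Rado; (2) $G_i$ is IS-imprimitive if and only if $n_i=2k_i\ge 4$ (Proposition~\ref{imprimitive Kneser graph}); (3) if $n_i>2k_i$ then $G_i$ is connected, non-bipartite and IS-primitive, whereas $G_i=\mathrm{KG}_{2k_i,k_i}$ is a perfect matching, equal to $K_2$ when $n_i=2$ and a disconnected (hence IS-imprimitive) union of $\ge 3$ edges when $n_i=2k_i\ge 4$. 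I would also record that a direct product of connected non-bipartite graphs is connected and non-bipartite (Weichsel), that iterating Theorem~\ref{tensor product} gives $\alpha(\prod_iG_i)/\prod_i|G_i|=\max_ik_i/n_i$, and that $K_2\times K_2=2K_2$ is MIS-normal (immediate from Theorem~\ref{tensor product}, since $K_2$ is IS-primitive). The case $p=1$ carries no content, so all the work is in the inductive step $p\ge 2$.

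For the necessity of the two exceptional configurations, suppose first that there are distinct $i,j$ with $n_i=2k_i\ge 4$ and $n_j=2k_j$. Write $P=G_i\times H$ with $H=\prod_{\ell\ne i}G_\ell$; then $\alpha(H)/|H|=\max_{\ell\ne i}k_\ell/n_\ell=\frac12=\alpha(G_i)/|G_i|$, the maximum equalling $\frac12$ because $j\ne i$ contributes $k_j/n_j=\frac12$. Since $G_i$ carries an imprimitive independent set $A$ (so $0<|A|<\alpha(G_i)$, with $\bar N[A]\ne\emptyset$ and $N(A)\ne\emptyset$), the remark after Theorem~\ref{tensor product} shows that, choosing $I\in I(H)$ with $\emptyset\ne I\subsetneq V(H)$, the set $S=(A\times V(H))\cup(\bar N[A]\times I)$ is a maximum independent set of $P$; its $H$-fibre equals $V(H)$ over $A$, equals $I$ over $\bar N[A]$, and is empty over $N(A)$, so $S$ cannot be the preimage of an independent set of a single factor and $P$ is not MIS-normal. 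For three distinct indices with $n=2$ the argument is the same, applied to the two-block split $P=(G_{i_1}\times G_{i_2})\times H$, where $G_{i_1}\times G_{i_2}=2K_2$ is IS-imprimitive (being disconnected) and $H=\prod_{\ell\ne i_1,i_2}G_\ell$ still has ratio $\frac12$ because a third $K_2$ factor survives.

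For sufficiency, assume no exceptional configuration occurs and set $T=\{i:n_i=2k_i\}$; the hypotheses force $|T|\le 1$, or $|T|=2$ with both corresponding factors equal to $K_2$. If $T=\emptyset$, reorder so $k_1/n_1\ge\cdots\ge k_p/n_p$ and set $Q=G_1\times\cdots\times G_{p-1}$. By the induction hypothesis every subproduct of $G_1,\ldots,G_{p-1}$ is MIS-normal, so a subsidiary induction on the number of factors, via Theorem~\ref{normal-imprimitive} and the IS-primitivity and non-bipartiteness of each such $G_i$, shows $Q$ is connected, non-bipartite and IS-primitive; applying Theorem~\ref{tensor product} to $P=Q\times G_p$ then rules out case (ii) (no IS-imprimitive factor) and case (iii) ($G_p$ connected), leaving case (i). If $|T|=1$, reorder so $G_p$ is the matching factor and write $P=H\times G_p$ with $H=\prod_{\ell<p}G_\ell$: then $\alpha(G_p)/|G_p|=\frac12$ strictly exceeds $\alpha(H)/|H|$ and $H$ is connected, so Theorem~\ref{tensor product} gives case (i) immediately. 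If $|T|=2$, then either $p=2$ and $P=K_2\times K_2$ is MIS-normal as noted, or $p\ge 3$; in the latter case reorder so $G_{p-1}=G_p=K_2$ and write $P=R\times(K_2\times K_2)$ with $R=\prod_{\ell<p-1}G_\ell$ nonempty of ratio $<\frac12$. Since $K_2\times K_2$ has ratio $\frac12$ strictly above that of $R$ and $R$ is connected, $P$ is MIS-normal for this two-block split; a size count (a preimage of an independent set of $R$ has size at most $|K_2\times K_2|\cdot\alpha(R)<\alpha(P)$) then forces every maximum independent set of $P$ to have the form $A\times V(R)$ with $A$ maximum independent in $K_2\times K_2$, and MIS-normality of $K_2\times K_2$ makes $A$, hence $A\times V(R)$, a preimage of an independent set of $G_{p-1}$ or $G_p$. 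So $P$ is MIS-normal for the full factorization.

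The main obstacle is this last step: upgrading MIS-normality from a coarse two-block factorization $P=(\text{block})\times(\text{rest})$ to MIS-normality for the full $p$-fold product. It needs the size count that discards maximum independent sets arising as preimages from the ``wrong'' block, together with the subsidiary induction establishing that a direct product of honest Kneser graphs ($n_i>2k_i$) is IS-primitive and non-bipartite; once these are secured, the remainder is a routine case split on $|T|$ driven entirely by Theorems~\ref{tensor product} and~\ref{normal-imprimitive}.
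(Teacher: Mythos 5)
The paper does not prove Theorem~\ref{imprimitive product kneser graph}; it is quoted from \cite{geng2012structure}, so there is no in-paper proof to compare yours against, and your argument has to be judged on its own. On those terms it is essentially correct and follows the natural route through Zhang's structure theorems. The necessity half works: with two matching factors (one on at least $4$ points) or three $K_2$ factors, the set $S=(A\times V(H))\cup(\bar{N}[A]\times I)$ from the remark after Theorem~\ref{tensor product} is a maximum independent set, and your fibre analysis does exclude all single-factor preimages — though in the three-$K_2$ case you should say explicitly that two block-vertices sharing the same $G_{i_1}$-coordinate (respectively $G_{i_2}$-coordinate) have different fibres, since that is what rules out preimages via the two factors inside the block rather than only via factors of $H$. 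The sufficiency half is also sound: the subsidiary induction (Theorems~\ref{tensor product} and~\ref{normal-imprimitive}, Proposition~\ref{imprimitive Kneser graph}, Weichsel) showing that a product of Kneser graphs with all $n_i>2k_i$ is connected, non-bipartite and IS-primitive is correct, and the case split on $T$ is exhaustive. The one expository shortfall is that the upgrade from two-block MIS-normality to MIS-normality of the full $p$-fold product is written out only for $|T|=2$: in the $T=\emptyset$ case with equal ratios the discard-by-size argument does not apply, and a maximum independent set of the form $J\times V(G_p)$ with $J\in I(Q)$ must be decomposed by invoking the induction hypothesis (full MIS-normality of $Q$); you state that hypothesis but never make this final invocation explicit. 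Adding that sentence (and the analogous one for $|T|=1$, where the size count does finish the job) closes the argument.
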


\subsection{Nontrivial independent sets of part-transitive bipartite graphs}

For a bipartite graph $G(X,Y)$ with two parts $X$ and $Y$, an independent set $A$ is said to be non-trivial if $A\nsubseteq X$ and $A\nsubseteq Y$. $G(X,Y)$ is said to be part-transitive if there is a group $\Gamma$ acting transitively upon each part and preserving its adjacency relations. Clearly, if $G(X,Y)$ is part-transitive, then every vertex of $X~(Y)$ has the same degree, written as $d(X)~(d(Y))$. We use $\alpha(X,Y)$ and $I(X,Y)$ to denote the size and the set of the maximum-sized nontrivial independent sets of $G(X,Y)$, respectively.

Let $G(X,Y)$ be a non-complete bipartite graph and let $A\cup B$ be a nontrivial independent set of $G(X,Y)$, where $A\subseteq X$ and $B\subseteq Y$. Then $A\subseteq X\setminus N(B)$ and $B\subseteq Y\setminus N(A)$, which implies
\begin{equation*}
|A|+|B|\leq \max{\{|A|+|Y|-|N(A)|, |B|+|X|-|N(B)|\}}.
\end{equation*}
So we have
\begin{equation}\label{eq03}
\alpha(X,Y)=\max{\{|Y|-\epsilon(X), |X|-\epsilon(Y)\}},
\end{equation}
where $\epsilon(X)=\min\{|N(A)|-|A|: A\subseteq X, N(A)\neq Y\}$ and $\epsilon(Y)=\min\{|N(B)|-|B|: B\subseteq Y, N(B)\neq X\}$.

We call $A\subseteq X$ a fragment of $G(X,Y)$ in $X$ if $N(A)\neq Y$ and $|N(A)|-|A|=\epsilon(X)$, and we denote $\mathcal{F}(X)$ as the set of all fragments in $X$. Similarly, we can define $\mathcal{F}(Y)$. Furthermore, denoting $\mathcal{F}(X,Y)=\mathcal{F}(X)\cup\mathcal{F}(Y)$, we call an element $A\in \mathcal{F}(X,Y)$ a $k$-fragment if $|A|=k$. And we call a fragment $A\in\mathcal{F}(X)$ trivial if $|A|=1$ or $A=X\setminus N(b)$ for some $b\in Y$. Since for each $A\in \mathcal{F}(X)$, $Y\setminus N(A)$ is a fragment in $\mathcal{F}(Y)$. Hence, once we know $\mathcal{F}(X)$, $\mathcal{F}(Y)$ can also be determined.

Let $X$ be a finite set, and $\Gamma$ a group acting transitively on $X$. If $\Gamma$ is imprimitive on $X$, then it preserves a nontrivial partition of $X$, called a block system, each element of which is called a block. Clearly, if $\Gamma$ is both transitive and imprimitive, there must be a subset $B\subseteq X$ such that $1<|B|<|X|$ and $\gamma(B)\cap B=B$ or $\emptyset$ for every $\gamma \in \Gamma$. In this case, $B$ is called an imprimitive set in $X$. Furthermore, a subset $B\subseteq X$ is said to be $semi$-$imprimitive$ if $1<|B|<|X|$ and for each $\gamma\in \Gamma$ we have $\gamma(B)\cap B=B$, $\emptyset$ or $\{b\}$ for some $b\in B$.

The following theorem (cf. \cite[Theorem~1.12]{J85}) is a classical result on the primitivity of group actions.
\begin{thm}\emph{(\cite{J85})}\label{primitivity}
Suppose that a group $\Gamma$ transitively acts on $X$. Then $\Gamma$ is primitive on $X$ if and only if for each $a\in X$, $\Gamma_a$ is a maximal subgroup of $\Gamma$. Here $\Gamma_a=\{\gamma\in\Gamma:\gamma(a)=a\}$, the stabilizer of $a\in X$.
\end{thm}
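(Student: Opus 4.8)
The plan is to prove this classical equivalence through the standard correspondence between the blocks of a transitive action and the subgroups lying between a point stabilizer and the whole group. Fix $a\in X$ and write $\Gamma_a=\{\gamma\in\Gamma:\gamma(a)=a\}$; by the orbit--stabilizer theorem the map $g\Gamma_a\mapsto g(a)$ identifies $X$ with the coset space $\Gamma/\Gamma_a$ equivariantly, so every statement about blocks translates into a statement about subgroups. First I would observe that, since $\Gamma$ acts transitively, every block $B$ admits a translate $\gamma(B)$ containing $a$, and $\gamma(B)$ is again a block of the same cardinality; hence $\Gamma$ is primitive on $X$ if and only if the only blocks containing $a$ are $\{a\}$ and $X$.

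The core step is to exhibit an inclusion-preserving bijection between the set of subgroups $H$ with $\Gamma_a\leq H\leq\Gamma$ and the set of blocks $B$ with $a\in B$. In one direction, given such an $H$, I would set $B:=H(a)=\{h(a):h\in H\}$ and check it is a block: if $\gamma(B)\cap B\neq\emptyset$, say $\gamma h_1(a)=h_2(a)$ with $h_1,h_2\in H$, then $h_2^{-1}\gamma h_1\in\Gamma_a\subseteq H$, so $\gamma\in H$ and therefore $\gamma(B)=B$. In the other direction, given a block $B$ with $a\in B$, I would take $H:=\Gamma_{\{B\}}$, the setwise stabilizer of $B$; it contains $\Gamma_a$ because $\gamma(a)=a\in B$ forces $\gamma(B)\cap B\neq\emptyset$, hence $\gamma(B)=B$. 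To see that the two maps are mutually inverse, the only nonobvious verification is $\Gamma_{\{B\}}(a)=B$: the inclusion $\subseteq$ is immediate since $a\in B$ and $\Gamma_{\{B\}}$ preserves $B$, and for $\supseteq$, given $b\in B$ choose $\gamma$ with $\gamma(a)=b$ by transitivity, so that $b\in\gamma(B)\cap B$, whence $\gamma(B)=B$ and $b=\gamma(a)\in\Gamma_{\{B\}}(a)$. Both maps plainly preserve inclusions.

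Finally I would read off the conclusion from the correspondence: the block $\{a\}$ corresponds to $H=\Gamma_a$ and the block $X$ corresponds to $H=\Gamma$, so the nontrivial blocks containing $a$ are exactly the images of the subgroups $H$ with $\Gamma_a\subsetneq H\subsetneq\Gamma$. Therefore there is no nontrivial block containing $a$ precisely when no such intermediate subgroup exists, i.e., when $\Gamma_a$ is a maximal subgroup of $\Gamma$; combined with the reduction from the first paragraph, this yields the stated equivalence for each $a\in X$ (the choice of $a$ is immaterial since all point stabilizers of a transitive action are conjugate). I expect no serious obstacle here: the argument is entirely formal, and the only points demanding a little care are checking that $H(a)$ is genuinely a block and that the setwise-stabilizer map inverts the assignment $H\mapsto H(a)$ — both of which are the short computations sketched above.
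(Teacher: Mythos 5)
Your argument is correct and complete: it is the standard textbook proof of this classical fact, via the inclusion-preserving bijection $H\mapsto H(a)$, $B\mapsto\Gamma_{\{B\}}$ between subgroups $\Gamma_a\leq H\leq\Gamma$ and blocks containing $a$. The paper itself gives no proof, citing the result to \cite{J85}, so there is nothing to compare against; the only tiny gloss is that you should also note $\Gamma_{\{H(a)\}}\subseteq H$, but this follows verbatim from the same computation you used to show $H(a)$ is a block, so the argument stands as written.
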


Noticing the similarities about families that are cross-t-intersecting or cross-Sperner, Wang and Zhang \cite{WZ13} proved the following theorem about $\alpha(G(X,Y))$ and $I(X,Y)$ of a special kind of part-transitive bipartite graphs.

\begin{thm}\emph{(\cite{WZ13})}\label{key00}
Let $G(X,Y)$ be a non-complete bipartite graph with $|X|\leq|Y|$. If $G(X,Y)$ is part-transitive and every fragment of $G(X,Y)$ is primitive under the action of a group $\Gamma$. Then $\alpha(X,Y)=|Y|-d(X)+1$. Moreover,
\begin{itemize}
{\item[(1)] If $|X|<|Y|$, then $X$ has only 1-fragments;}
{\item[(2)] If $|X|=|Y|$, then each fragment in $X$ has size 1 or $|X|-d(X)$ unless there is a semi-imprimitive fragment in $X$ or $Y$.}
\end{itemize}
\end{thm}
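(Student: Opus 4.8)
The plan is to run the standard ``uncrossing'' analysis of fragments, organised around the submodular function $f(A)=|N(A)|-|A|$, and to feed in the primitivity hypothesis through Theorem~\ref{primitivity} in order to control the minimum-size fragments. Two structural facts make everything run. First, $f$ is submodular: since $N(A\cup A')=N(A)\cup N(A')$ and $N(A\cap A')\subseteq N(A)\cap N(A')$, one has $f(A\cup A')+f(A\cap A')\le f(A)+f(A')$. Consequently fragments \emph{uncross}: if $A,A'\in\mathcal{F}(X)$ satisfy $A\cap A'\ne\emptyset$ and $N(A\cup A')\ne Y$, then both $A\cap A'$ and $A\cup A'$ lie in $\mathcal{F}(X)$. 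Second, I would pin down $\alpha(X,Y)$. Using that $A\mapsto Y\setminus N(A)$ is a bijection $\mathcal{F}(X)\to\mathcal{F}(Y)$ (noted just before the statement), comparing the excesses of $A$ and of $Y\setminus N(A)$ gives $\epsilon(Y)=\epsilon(X)-(|Y|-|X|)$, hence $|Y|-\epsilon(X)=|X|-\epsilon(Y)$, and by \eqref{eq03} we get $\alpha(X,Y)=|Y|-\epsilon(X)$. Since a single vertex $a\in X$ has $N(\{a\})\ne Y$ ($G$ is non-complete) and $f(\{a\})=d(X)-1$, we have $\epsilon(X)\le d(X)-1$; so the first assertion reduces entirely to proving $\epsilon(X)\ge d(X)-1$, i.e. that a minimum-size fragment of $X$ is a single vertex.

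Next I would analyse a minimum-size fragment $A$ of $X$, aiming to show $|A|=1$. As $\mathcal{F}(X)$ is $\Gamma$-invariant, every translate $\gamma A$ is again a minimum-size fragment. Minimality together with uncrossing forces a dichotomy for each $\gamma\in\Gamma$: either $\gamma A\cap A\in\{\emptyset,A\}$ (if $\gamma A$ crossed $A$ then $\gamma A\cap A$ would be a fragment of size strictly below $|A|$, which is impossible), or the neighbourhoods cover, $N(A\cup\gamma A)=Y$. If the covering alternative never occurs, then $\{\gamma A:\gamma\in\Gamma\}$ partitions $X$ into blocks, so $A$ is an imprimitive set; reading the hypothesis ``every fragment is primitive'' as ``no fragment is an imprimitive (block) set''—precisely what maximality of the stabilisers in Theorem~\ref{primitivity} rules out—this cannot happen once $|A|\ge 2$. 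Hence if $|A|\ge2$ there is a covering translate $\gamma_0$, i.e. $N(A\cup\gamma_0A)=Y$ with $A\cap\gamma_0A$ a proper nonempty subset, and dually $Y\setminus N(A)$ and $\gamma_0(Y\setminus N(A))$ are disjoint fragments of $Y$. Substituting $N(A\cup\gamma_0A)=Y$ back into submodularity gives $|A\cup\gamma_0A|\ge\alpha(X,Y)$, with the inequality strict (equality would make $A\cap\gamma_0A$ a fragment smaller than $A$). Since $A\cup\gamma_0A\subseteq X$, this yields $|X|>\alpha(X,Y)=|Y|-\epsilon(X)$, equivalently $\epsilon(X)>|Y|-|X|$.

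The remaining step is to exploit this covering configuration, and here the hypothesis $|X|\le|Y|$ splits into the two cases. When $|X|<|Y|$, I would play the disjointness of the two $Y$-fragments $Y\setminus N(A)$ and $\gamma_0(Y\setminus N(A))$—whose neighbourhoods both avoid the common part $A\cap\gamma_0A$—against the strict bound $\epsilon(X)>|Y|-|X|$, via a counting estimate on the larger side $Y$, to reach a contradiction; this excludes $|A|\ge2$, so the minimum $X$-fragment is a vertex, $\epsilon(X)=d(X)-1$, and $\alpha(X,Y)=|Y|-d(X)+1$. To upgrade ``the minimum fragment is a vertex'' to conclusion~(1), that \emph{every} $X$-fragment is a $1$-fragment, I would take a hypothetical fragment of size $\ge2$ and, intersecting and unioning it with suitable translates, produce through uncrossing either a forbidden block or a covering configuration already excluded above. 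When $|X|=|Y|$ the same covering analysis no longer yields a contradiction; instead I would show that the proper overlaps $A\cap\gamma A$ are pushed down to single points—so that $A$ is semi-imprimitive—unless $A$ is of complement type $X\setminus N(b)$, of size $|X|-d(X)$, which is exactly the dual of a $1$-fragment of $Y$. This is conclusion~(2) together with its semi-imprimitive exception, with Theorem~\ref{primitivity} again used to phrase the block/overlap alternatives group-theoretically.

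The hard part will be precisely this covering-configuration analysis: converting the single inequality $|A\cup\gamma_0A|\ge\alpha(X,Y)$ and the disjoint dual $Y$-fragments into an outright contradiction when $|X|<|Y|$, and into the exact size dichotomy (size $1$ or $|X|-d(X)$, with the semi-imprimitive escape) when $|X|=|Y|$. I expect this to require an extremal choice of the crossing translate $\gamma_0$—for instance one maximising $|A\cap\gamma_0A|$—and careful double counting on the larger part $Y$; the bootstrap in case~(1) from the minimum fragment to \emph{all} fragments is a second, logically independent complication that I would handle only after the value of $\alpha(X,Y)$ is established.
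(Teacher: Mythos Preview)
The paper does not itself prove this theorem—it is quoted from \cite{WZ13}—but the proof given for the variant Theorem~\ref{key01} explicitly says it is ``the same as the original proof of Theorem~\ref{key00}'' and exposes the mechanism: iterated application of Lemma~\ref{lem02}. Compared with your uncrossing step (which is Lemma~\ref{lem01}(ii)), the essential extra ingredient in Lemma~\ref{lem02} is the hypothesis $|A|\le|\phi(A)|$, i.e.\ $2|A|\le\alpha(X,Y)$. This single observation is exactly what your covering-case analysis is missing.

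Concretely: if $A$ is a fragment with $2|A|\le\alpha(X,Y)$ and $\gamma A\cap A$ is a nonempty proper subset, then the covering alternative $N(A\cup\gamma A)=Y$ cannot occur. Your own submodularity computation gives $|A\cup\gamma A|\ge\alpha(X,Y)$ in that case, while trivially $|A\cup\gamma A|\le 2|A|-1<\alpha(X,Y)$. So uncrossing always succeeds for such $A$. The descent therefore runs not on ``a minimum-size fragment of $X$'' but on any $A_0\in\mathcal F(X,Y)$ with $|A_0|\le|\phi(A_0)|$: if $A_0$ is neither a singleton nor an imprimitive block, pass to the smaller of $A_0\cap\gamma A_0$ and $\phi(A_0\cap\gamma A_0)$ and repeat, reaching a singleton in finitely many steps. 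Under the primitivity hypothesis this yields a $1$-fragment in $X$ or in $Y$; either way one gets $\epsilon(X)=d(X)-1$ (for a $1$-fragment in $Y$, combine $\epsilon(X)=\epsilon(Y)+|Y|-|X|$ with the edge-count inequality $|Y|-|X|\ge d(X)-d(Y)$, strict unless $|X|=|Y|$).

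Your proposed alternative—staying inside $\mathcal F(X)$, accepting the covering configuration, and then looking for ``a counting estimate on the larger side $Y$''—does not close. The only information you extract, namely $|A\cup\gamma_0A|>\alpha(X,Y)$ and the disjointness of $\phi(A),\gamma_0\phi(A)$ in $Y$, is perfectly compatible with $|X|<|Y|$; indeed one checks $f_Y\bigl(\phi(A)\cup\gamma_0\phi(A)\bigr)-\epsilon(Y)=|A\cup\gamma_0A|-\alpha(X,Y)>0$, so the union on the $Y$ side is not even a fragment and no further uncrossing is available there. The remedy is not a sharper count but the change of viewpoint above: always carry the half of the dual pair satisfying $|A|\le|\phi(A)|$. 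Once $\alpha(X,Y)=|Y|-d(X)+1$ is established this way, conclusions (1) and (2) are obtained by running the same $\phi$-aware descent from an \emph{arbitrary} fragment; the semi-imprimitive escape in (2) is precisely the case where the descent is stuck at a balanced fragment $|A|=|\phi(A)|$ with all proper overlaps of size~$1$ (cf.\ Proposition~\ref{fragment2}).
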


To deal with multi-part cross-intersecting families, we introduce the following variation of Theorem \ref{key00}.

\begin{thm}\label{key01}
Let $G(X,Y)$ be a non-complete bipartite graph with $|X|\leq|Y|$. If $G(X,Y)$ is part-transitive under the action of a group $\Gamma$. Then
\begin{equation}\label{eq04}
\alpha(X,Y)=\max{\{|Y|-d(X)+1, |A'|+|Y|-|N(A')|, |B'|+|X|-|N(B')|\}},
\end{equation}
where $A'$ and $B'$ are minimum imprimitive subsets of $X$ and $Y$ respectively. By minimum, here we mean that $|N(A')|-|A'|=\min{\{|N(A)|-|A|: A\in X~(\text{or~} Y) \text{~is~imprimitive}\}}$.
\end{thm}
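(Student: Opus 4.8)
The plan is to analyze the structure of a maximum-sized nontrivial independent set $A \cup B$ of $G(X,Y)$ with $A \subseteq X$, $B \subseteq Y$, and to show that its size equals the right-hand side of \eqref{eq04}. By \eqref{eq03} we already know $\alpha(X,Y) = \max\{|Y| - \epsilon(X), |X| - \epsilon(Y)\}$, so it suffices to understand the fragments achieving $\epsilon(X)$ and $\epsilon(Y)$. The key difference from Theorem~\ref{key00} is that we no longer assume all fragments are primitive, so the extremal configuration can come from an imprimitive fragment; the three terms in \eqref{eq04} correspond exactly to (a) a $1$-fragment in the smaller part, (b) a minimum imprimitive fragment $A'$ in $X$, and (c) a minimum imprimitive fragment $B'$ in $Y$.

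First I would set up the fragment machinery: recall that $\mathcal{F}(X)$ is closed under the action of $\Gamma$, and that the family of fragments in $X$ has a lattice-like structure — if $A_1, A_2 \in \mathcal{F}(X)$ with $N(A_1 \cap A_2) \neq Y$, then submodularity of $A \mapsto |N(A)|$ forces $A_1 \cap A_2$ and $A_1 \cup A_2$ (when its neighborhood avoids $Y$) to be fragments as well. From this one deduces that a minimal (under inclusion) fragment $K$ in $X$ is a block of imprimitivity or a singleton or of the form $X \setminus N(b)$: indeed, the $\Gamma$-translates of $K$ either coincide with $K$, are disjoint from $K$, or their pairwise intersections are again fragments strictly inside $K$, contradicting minimality unless those intersections are singletons. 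This is essentially the argument behind Theorem~\ref{key00}, and I would follow Wang--Zhang's treatment, but carefully track the case where the minimal fragment is genuinely imprimitive rather than trivial.

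Next I would split into cases according to the nature of a minimal fragment $K \in \mathcal{F}(X)$ (and symmetrically in $Y$). If $|K| = 1$, then $\epsilon(X) = d(X) - 1$ and the bound $|Y| - d(X) + 1$ appears. If $K = X \setminus N(b)$ for some $b \in Y$, then $Y \setminus N(K) \ni b$ is a fragment in $Y$ of size contributing the term $|B'| + |X| - |N(B')|$ after passing to a minimum imprimitive set. If $K$ is a nontrivial block of imprimitivity, then $K$ itself (or its minimum-sized representative among imprimitive subsets, by the extremal choice of $A'$) yields the term $|A'| + |Y| - |N(A')|$; here one uses that for an imprimitive set the neighborhood is a union of blocks in the quotient, and minimality of $|N(A')| - |A'|$ picks out the correct extremal value. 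Conversely, each of the three displayed quantities is realized by an actual nontrivial independent set ($\{x\} \cup (Y \setminus N(x))$, $A' \cup (Y \setminus N(A'))$, $B' \cup (X \setminus N(B'))$ respectively), so $\alpha(X,Y)$ is at least the maximum; combined with \eqref{eq03} and the fragment analysis this gives equality.

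The main obstacle I anticipate is the dichotomy at the level of minimal fragments: without the primitivity hypothesis, one has to argue that a minimal fragment is \emph{either} trivial \emph{or} an imprimitive set — ruling out "exotic" minimal fragments that are neither — and then that replacing it by a \emph{minimum} imprimitive set can only decrease $|N(\cdot)| - |\cdot|$, so no generality is lost in \eqref{eq04}. Handling the interaction between $\mathcal{F}(X)$ and $\mathcal{F}(Y)$ under the correspondence $A \mapsto Y \setminus N(A)$ (which is inclusion-reversing and maps fragments to fragments) requires care to make sure the two imprimitive terms are not double-counted or missed. I expect the submodularity/lattice argument to be the technical heart, with the rest being bookkeeping over the cases.
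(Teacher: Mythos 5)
Your overall strategy is the same as the paper's: reduce to minimal fragments by intersecting a fragment with its $\Gamma$-translates (this is exactly the repeated application of Lemma~\ref{lem02}), conclude that the terminal fragment is either a singleton or an imprimitive set, and then read off $\alpha(X,Y)$ from \eqref{eq03}. Two points in your write-up need repair, though. First, your case ``$K=X\setminus N(b)$'' is handled incorrectly: here $Y\setminus N(K)$ is a \emph{singleton} $\{b\}$ in $Y$, and a singleton is never an imprimitive set (imprimitive sets have size strictly between $1$ and $|Y|$), so this case does not feed into the $|B'|+|X|-|N(B')|$ term. It contributes the quantity $|X|-d(Y)+1$, which does not appear in \eqref{eq04}; to obtain the stated three-term maximum you must still show $|X|-d(Y)+1\leq|Y|-d(X)+1$, which the paper does by double counting edges, $d(X)|X|=d(Y)|Y|$, together with $|X|\leq|Y|$. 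Without this step you only prove a four-term version of \eqref{eq04}.

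Second, the ``submodularity/lattice'' descent is not quite self-contained as you describe it: Lemma~\ref{lem01}(ii) only guarantees that $A\cap\gamma(A)$ is a fragment when $N(A\cup\gamma(A))\neq Y$, and that hypothesis can fail. This is precisely why Lemma~\ref{lem02} carries the condition $|A|\leq|\phi(A)|$, and why the paper's iteration replaces the current fragment by $\phi(A_0\cap\gamma(A_0))$ (switching sides between $\mathcal{F}(X)$ and $\mathcal{F}(Y)$) whenever the intersection lands on the larger side. If you invoke Lemma~\ref{lem02} explicitly and keep track of this side-switching, your descent terminates, after at most $|A_0|-1$ steps, at a singleton or an imprimitive set in $X$ or in $Y$, and then the argument closes exactly as in the paper once the comparison $|X|-d(Y)+1\leq|Y|-d(X)+1$ is added.
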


For the proof of Theorem~\ref{key01}, we need the following two lemmas from \cite{WZ13}.

\begin{lem}\emph{(\cite{WZ13})}\label{lem01}
Let $G(X,Y)$ be a non-complete bipartite graph. Then, $|Y|-\epsilon(X)=|X|-\epsilon(Y)$, and
\begin{itemize}
{\rm\item[(i)] $A\in \mathcal{F}(X)$ if and only if $(Y\setminus N(A))\in \mathcal{F}(Y)$ and $N(Y\setminus N(A))=X\setminus A$;}
{\rm\item[(ii)] $A\cap B$ and $A\cup B$ are both in $\mathcal{F}(X)$ if $A$, $B\in \mathcal{F}(X)$, $A\cap B\neq \emptyset$ and $N(A\cup B)\neq Y$.}
\end{itemize}
\end{lem}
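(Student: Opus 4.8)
The plan is to derive all three assertions from the elementary bipartite duality: for $A\subseteq X$ and $B\subseteq Y$, there are no edges between $A$ and $B$ if and only if $B\subseteq Y\setminus N(A)$, equivalently $A\subseteq X\setminus N(B)$. In particular, putting $B=Y\setminus N(A)$ always yields $N(B)\subseteq X\setminus A$, and symmetrically. Throughout I take the sets entering $\epsilon(X)$ and $\epsilon(Y)$ to be nonempty (the convention under which the smallest fragments are singletons); nonemptiness is exactly what guarantees that the complementary neighbourhoods are proper, so that the dual sets are admissible competitors in the minima defining $\epsilon$.

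First I would prove the identity $|Y|-\epsilon(X)=|X|-\epsilon(Y)$. Take any nonempty $A\subseteq X$ with $N(A)\neq Y$ and set $B=Y\setminus N(A)$, so $B\neq\emptyset$ and, by duality, $N(B)\subseteq X\setminus A$; since $A\neq\emptyset$ this forces $N(B)\neq X$, so $B$ is admissible for $\epsilon(Y)$. Then
\[
\epsilon(Y)\le |N(B)|-|B|\le (|X|-|A|)-(|Y|-|N(A)|)=(|X|-|Y|)+(|N(A)|-|A|).
\]
Minimising the right-hand side over $A$ gives $\epsilon(Y)\le (|X|-|Y|)+\epsilon(X)$, that is, $|Y|-\epsilon(X)\le |X|-\epsilon(Y)$; exchanging the roles of $X$ and $Y$ yields the reverse inequality, hence equality.

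For part (i) I would reuse the same construction and track when the two inequalities above become equalities. If $A\in\mathcal{F}(X)$, put $B=Y\setminus N(A)$; the displayed chain together with the identity pins $|N(B)|-|B|$ to $\epsilon(Y)$, so $B\in\mathcal{F}(Y)$, and the resulting forced value $|N(B)|=|X|-|A|$ combined with $N(B)\subseteq X\setminus A$ upgrades the inclusion to the claimed equality $N(B)=X\setminus A$. Conversely, if $B=Y\setminus N(A)\in\mathcal{F}(Y)$ with $N(B)=X\setminus A$, then substituting $|N(B)|=|X|-|A|$ and $|B|=|Y|-|N(A)|$ into $|N(B)|-|B|=\epsilon(Y)$ and invoking the identity returns $|N(A)|-|A|=\epsilon(X)$; moreover $B\neq\emptyset$ forces $N(A)\neq Y$, so $A\in\mathcal{F}(X)$.

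Part (ii) is the standard submodularity argument and is logically independent of the identity. Using $N(A\cup B)=N(A)\cup N(B)$ and $N(A\cap B)\subseteq N(A)\cap N(B)$, together with $|A\cup B|+|A\cap B|=|A|+|B|$, I obtain
\[
\big(|N(A\cup B)|-|A\cup B|\big)+\big(|N(A\cap B)|-|A\cap B|\big)\le (|N(A)|-|A|)+(|N(B)|-|B|)=2\epsilon(X).
\]
Since $A\cap B\neq\emptyset$ and $N(A\cap B)\subseteq N(A)\neq Y$, while $N(A\cup B)\neq Y$ by hypothesis, both $A\cap B$ and $A\cup B$ are admissible, so each summand on the left is at least $\epsilon(X)$; hence both equal $\epsilon(X)$ and lie in $\mathcal{F}(X)$. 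The main obstacle I anticipate is not any single computation but keeping the admissibility bookkeeping honest — ensuring at each appeal to a minimum that the competing set is nonempty and its neighbourhood proper — and cleanly extracting the equality cases of (i) from the two-sided inequality; the conceptual core from which all three parts flow is the duality $N(B)\subseteq X\setminus A\iff B\subseteq Y\setminus N(A)$.
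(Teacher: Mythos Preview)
Your proof is correct and follows the standard route. Note that the paper itself does not prove this lemma: it is quoted from \cite{WZ13} and used as a black box, so there is no in-paper argument to compare against. Your duality observation $B\subseteq Y\setminus N(A)\iff A\subseteq X\setminus N(B)$, the two-sided inequality giving $|Y|-\epsilon(X)=|X|-\epsilon(Y)$, the extraction of equality cases for (i), and the submodularity computation for (ii) are exactly the expected ingredients. The one point worth flagging is your explicit nonemptiness convention for the sets defining $\epsilon(X)$ and $\epsilon(Y)$: the paper's definition does not state this, but the surrounding text (the formula $\alpha(X,Y)=\max\{|Y|-\epsilon(X),|X|-\epsilon(Y)\}$ for \emph{nontrivial} independent sets, and the fact that $1$-fragments are called trivial) makes clear it is intended, so your reading is the right one.
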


\begin{lem}\emph{(\cite{WZ13})}\label{lem02}
Let $G(X,Y)$ be a non-complete and part-transitive bipartite graph under the action of a group $\Gamma$. Suppose that $A\in\mathcal{F}(X,Y)$ such that $\emptyset\neq\gamma(A)\cap A\neq A$ for some $\gamma\in\Gamma$. Define $\phi: \mathcal{F}(X,Y)\rightarrow\mathcal{F}(X,Y)$,
\begin{flalign*}
\phi (A)=\left\{\begin{array}{ll}Y\setminus N(A),&~~~\text{if}~A\in\mathcal{F}(X);\\X\setminus N(A),&~~~\text{if}~A\in\mathcal{F}(Y).\end{array}\right.
\end{flalign*}
If $|A|\leq|\phi(A)|$, then $A\cup\gamma(A)$ and $A\cap\gamma(A)$ are both in $\mathcal{F}(X,Y)$.
\end{lem}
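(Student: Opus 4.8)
The plan is to reduce the computation of $\alpha(X,Y)$ to locating an extremal fragment, and then to show, through the closure of fragments under the group action, that a fragment of minimum cardinality must be either a single vertex or a genuine imprimitive block. First, by \ref{eq03} together with Lemma~\ref{lem01}, one has $\alpha(X,Y)=|Y|-\epsilon(X)=|X|-\epsilon(Y)$, so it suffices to identify a subset realizing $\epsilon(X)$ (equivalently $\epsilon(Y)$). For the lower bound I would exhibit three nontrivial independent sets: for a single vertex $x\in X$ the set $\{x\}\cup(Y\setminus N(x))$ has size $|Y|-d(X)+1$ (nontrivial since $d(X)<|Y|$ by non-completeness); for a minimum imprimitive $A'\subseteq X$ with $N(A')\neq Y$ the set $A'\cup(Y\setminus N(A'))$ has size $|A'|+|Y|-|N(A')|$; and symmetrically $B'$ in $Y$ gives $|B'|+|X|-|N(B')|$. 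Hence $\alpha(X,Y)$ is at least the maximum of the three quantities, and the remaining task is the matching upper bound.

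For the upper bound I would choose a fragment $A\in\mathcal{F}(X,Y)$ of smallest possible cardinality. The key preliminary observation is the size identity $|A|+|\phi(A)|=\alpha(X,Y)$, valid for every fragment (it follows from $|N(A)|-|A|=\epsilon$ and the definition of $\phi$ in Lemma~\ref{lem02}), together with the fact that $\phi$ maps $\mathcal{F}(X)$ bijectively onto $\mathcal{F}(Y)$ by Lemma~\ref{lem01}(i). Since $\phi(A)\in\mathcal{F}(X,Y)$ is again a fragment, minimality of $|A|$ gives $|A|\leq|\phi(A)|$, so the hypothesis of Lemma~\ref{lem02} is met. I then claim $A$ is a block, that is $\gamma(A)\cap A\in\{A,\emptyset\}$ for all $\gamma\in\Gamma$: otherwise some $\gamma$ yields $\emptyset\neq\gamma(A)\cap A\neq A$, and Lemma~\ref{lem02} produces the fragment $A\cap\gamma(A)$ of strictly smaller cardinality, contradicting minimality.

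It then remains to read off $\alpha(X,Y)$ according to the type of $A$. If $|A|=1$ with $A\subseteq X$, then $\epsilon(X)=d(X)-1$ and $\alpha=|Y|-d(X)+1$, the first term; a minimum singleton in $Y$ forces $|X|=|Y|$, hence $d(X)=d(Y)$ by the edge-count identity $|X|\,d(X)=|Y|\,d(Y)$, so that subcase collapses to the same term, while for $|X|<|Y|$ one checks that $\epsilon(Y)=d(Y)-1$ would force $d(Y)=|X|$, i.e. $G$ complete, a contradiction, so singletons in $Y$ are never minimal. If $|A|>1$, then $1<|A|<|X|$ (since $N(A)\neq Y$ forbids $A=X$) and $A$ is an imprimitive set; because $A$ realizes $\epsilon(X)$ and is imprimitive, its excess must coincide with that of the minimum imprimitive $A'$, giving $\alpha=|A'|+|Y|-|N(A')|$, and symmetrically a block in $Y$ yields $|B'|+|X|-|N(B')|$. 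Combined with the lower bound, this establishes the asserted equality.

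The main obstacle I anticipate is the structural dichotomy of the third paragraph, namely forcing a minimum fragment to be a single vertex or a block. This rests entirely on verifying $|A|\leq|\phi(A)|$ so that Lemma~\ref{lem02} applies, and on the book-keeping that identifies the extremal block with the \emph{minimum imprimitive subset} of the statement. The most delicate point is the boundary case $|X|=|Y|$, where a fragment and its $\phi$-image may have equal sizes and the singleton analysis on the two sides must be reconciled through the edge-count identity, and the verification that singletons in $Y$ cannot be minimal when $|X|<|Y|$.
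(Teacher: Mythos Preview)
Your proposal does not prove the stated lemma at all: the statement asked for is Lemma~\ref{lem02}, which asserts that for a fragment $A$ with $|A|\le|\phi(A)|$ and some $\gamma$ with $\emptyset\neq\gamma(A)\cap A\neq A$, both $A\cap\gamma(A)$ and $A\cup\gamma(A)$ are again fragments. Instead you have written a proof of Theorem~\ref{key01}, the formula for $\alpha(X,Y)$, and in that argument you \emph{invoke} Lemma~\ref{lem02} as a black box (``Lemma~\ref{lem02} produces the fragment $A\cap\gamma(A)$ of strictly smaller cardinality''). So you have used the very lemma you were meant to establish.

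To actually prove Lemma~\ref{lem02} you need a direct submodularity-type argument on neighbourhoods: since $\gamma$ preserves adjacency, $\gamma(A)$ is also a fragment with $N(\gamma(A))=\gamma(N(A))$ and the same excess $|N(\gamma(A))|-|\gamma(A)|=\epsilon$. One checks $N(A\cap\gamma(A))\subseteq N(A)\cap N(\gamma(A))$ and $N(A\cup\gamma(A))=N(A)\cup N(\gamma(A))$, and then the inclusion--exclusion identity
\[
\bigl(|N(A\cup\gamma(A))|-|A\cup\gamma(A)|\bigr)+\bigl(|N(A)\cap N(\gamma(A))|-|A\cap\gamma(A)|\bigr)=2\epsilon
\]
forces both bracketed quantities to equal $\epsilon$, once one verifies $N(A\cup\gamma(A))\neq Y$. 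That last point is where the hypothesis $|A|\le|\phi(A)|$ is used: it guarantees $|\phi(A)\cap\phi(\gamma(A))|>0$ via a size count, so $Y\setminus N(A\cup\gamma(A))\supseteq\phi(A)\cap\phi(\gamma(A))\neq\emptyset$. None of this appears in your write-up.

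As a side remark, your argument \emph{for Theorem~\ref{key01}} is close in spirit to the paper's, which also reduces to a singleton-or-imprimitive dichotomy via Lemma~\ref{lem02}; the paper phrases it as an iterative descent $A_0,A_1,\ldots$ rather than picking a minimum-cardinality fragment outright, but the content is the same. That, however, is not the statement you were asked to prove.
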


\begin{rem}\label{balanced}
As a direct consequence of Lemma~\ref{lem01}, a maximum-sized nontrivial independent set in $G(X,Y)$ is of the form $A\sqcup(Y\setminus N(A))$ for some $A\in \mathcal{F}(X)$ or $B\sqcup(X\setminus N(B))$ for some $B\in \mathcal{F}(Y)$. Therefore, in order to address our problems, it suffices to determine $\mathcal{F}(X)$ $(\text{or~}\mathcal{F}(Y))$.

Meanwhile, for the mapping $\phi$ in Lemma~\ref{lem02}, we have $\phi^{-1}=\phi$ and $|A|+|\phi(A)|=\alpha(X,Y)$. When $|A|=|\phi(A)|$, we call the fragment $A$ balanced. Thus, all balanced fragments have size $\frac{1}{2}\alpha(X,Y)$.
\end{rem}

\begin{proof}[\bf{Proof of Theorem~\ref{key01}}]
The same as the original proof of Theorem \ref{key00} in \cite{WZ13}, we apply Lemma~\ref{lem02} repeatedly. For any $A_0\in\mathcal{F}(X,Y)$ satisfying $|A_0|\leq|\phi(A_0)|$, if there exists $\gamma\in\Gamma$ such that $\emptyset\neq\gamma(A_0)\cap A_0\neq A_0$, then by Lemma~\ref{lem02} we have: (1) $A_0\cap\gamma(A_0)\in\mathcal{F}(X,Y)$ or (2) $\gamma(A_0)\cap A_0=\emptyset$ or $\gamma(A_0)\cap A_0=A_0$ for any $\gamma\in\Gamma$.

For case (1), denote
\begin{flalign*}
A_1=\left\{\begin{array}{ll}A_0\cap\gamma(A_0), &~\text{if}~|A_0\cap\gamma(A_0)|\leq|\phi(A_0\cap\gamma(A_0))|;
\\ \phi(A_0\cap\gamma(A_0)), &~\text{~~~~~~~~~~~~~~~otherwise};\end{array}\right.
\end{flalign*}
and consider the primitivity of $A_1$, i.e., whether there is a $\gamma'\in\Gamma$ such that $\emptyset\neq\gamma'(A_1)\cap A_1\neq A_1$ or not.

For case (2), if $|A_0|\neq1$,  according to the definition, $A_0$ is an imprimitive set of $X$ (or $Y$). Otherwise, $|A_0|=1$, which means $\mathcal{F}(X,Y)$ contains a singleton.

By doing these procedures repeatedly, after $r$ $(0\leq r\leq |A_0|-1)$ steps, we have a fragment $A_r\in\mathcal{F}(X,Y)$ such that $A_r$ is either a singleton or an imprimitive set. Hence, we have
$$\alpha(X,Y)=\max{\{|Y|-d(X)+1, |X|-d(Y)+1, |A'|+|Y|-|N(A')|, |B'|+|X|-|N(B')|\}},$$
where $A'$ and $B'$ are minimum imprimitive subsets of $X$ and $Y$ respectively. Noticing that $|Y|\geq|X|$ and $d(X)|X|=d(Y)|Y|$, we have  $d(X)=d(Y)|Y|/|X|\geq d(Y)$. Therefore,
$$|Y|-|X|=d(X)|X|/d(Y)-|X|=(d(X)-d(Y))|X|/d(Y)\geq d(X)-d(Y),$$
which implies that $|X|-d(Y)+1\leq|Y|-d(X)+1$. Finally we have
$$\alpha(X,Y)=\max{\{|Y|-d(X)+1, |A'|+|Y|-|N(A')|, |B'|+|X|-|N(B')|\}}.$$
\end{proof}

\section{Proof of Theorem~\ref{main0}}

Throughout this section, for any nonempty subset $S\subseteq[p]$ and $A=\prod_{i\in S}A_i\in\prod_{i\in S}{[n_i]\choose k_i}$, denote $\bar{A}=\prod_{i\in S}\bar{A}_i$. Before we start the proof of Theorem~\ref{main0}, we introduce the following proposition about the direct product of Kneser graphs.

\begin{prop}\label{note1}
Given a positive integer $p$, let $n_1,n_2,\ldots,n_p$ and $k_1,k_2,\ldots,k_p$ be positive integers with $n_i\ge 2k_i$ for $1\leq i\leq p$. Let $G=\prod_{i\in[p]}{KG_{n_i,k_i}}$. Then $G$ is IS-imprimitive if and only if there exists an $i\in[p]$ such that $n_i=2k_i\ge 4$ or there exist distinct $i,j\in[p]$ such that $n_i=n_j=2$ and $k_i=k_j=1$.
\end{prop}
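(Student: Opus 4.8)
The plan is to reduce everything to the structural characterization of maximum independent sets in products of Kneser graphs (Theorem~\ref{imprimitive product kneser graph}) together with the primitivity criterion for products encoded in Theorem~\ref{normal-imprimitive}. Write $G=\prod_{i\in[p]}KG_{n_i,k_i}$ and recall $\frac{\alpha(KG_{n_i,k_i})}{|KG_{n_i,k_i}|}=\frac{k_i}{n_i}\le\frac12$, with equality exactly when $n_i=2k_i$. After reindexing we may assume $\frac{k_1}{n_1}\ge\frac{k_2}{n_2}\ge\cdots\ge\frac{k_p}{n_p}$, and by Theorem~\ref{tensor product} we have $\alpha(G)=\alpha(KG_{n_1,k_1})\prod_{j\ge2}|KG_{n_j,k_j}|$.

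The ``if'' direction is the easy one: I would exhibit an explicit imprimitive independent set. If some $n_i=2k_i\ge4$, then by Proposition~\ref{imprimitive Kneser graph} $KG_{n_i,k_i}$ itself is IS-imprimitive, so it has an imprimitive independent set $A_i$; taking the full preimage $A_i\times\prod_{j\ne i}V(KG_{n_j,k_j})$ and using that $\frac{|A_i|}{|N[A_i]|}=\frac{k_i}{n_i}=\max_j\frac{k_j}{n_j}=\frac{\alpha(G)}{|G|}$ (the last equality since $n_i=2k_i$ gives the maximal ratio $\frac12$), one checks the preimage is an imprimitive independent set of $G$ of size $<\alpha(G)$. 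Similarly, if $n_i=n_j=2$, $k_i=k_j=1$, then $KG_{2,1}=K_2$ is disconnected, hence $G$ is disconnected, hence automatically IS-imprimitive. (I should also note the degenerate boundary case $n_i=2k_i=2$ alone: then $KG_{2,1}=K_2$ is disconnected and $G$ is disconnected, so $G$ is IS-imprimitive — this needs to be folded into the statement, since a single $n_i=2,k_i=1$ already forces disconnectedness; presumably the intended reading of the proposition is that the second clause covers the case where such a factor is paired with anything, and indeed $p\ge 2$ is implicit, or $\alpha(G)$-primitivity is being asked about a connected $G$. I would make this explicit in the writeup.)

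For the ``only if'' direction, suppose none of the exceptional configurations occurs; I want to show $G$ is IS-primitive. First, since no $n_i=2k_i=2$ paired appropriately and (handling the boundary case) no $n_i=2k_i$ at all forces $K_2$ in a way that disconnects, $G$ is connected. Now split on the structure given by Theorem~\ref{imprimitive product kneser graph}: if $G$ is MIS-normal, apply Theorem~\ref{normal-imprimitive} with $G_1:=KG_{n_1,k_1}$ playing the role of $G$ and $H:=\prod_{j\ge2}KG_{n_j,k_j}$; IS-imprimitivity of $G$ would force either (ii) $\frac{k_1}{n_1}>\frac{\alpha(H)}{|H|}$ with $KG_{n_1,k_1}$ IS-imprimitive — impossible by Proposition~\ref{imprimitive Kneser graph} since $n_1\ne 2k_1$ (as $n_1=2k_1\ge4$ is excluded and $n_1=2k_1=2$ would make the ratio $\tfrac12$, forcing $\tfrac{k_1}{n_1}=\tfrac12\ge\tfrac{\alpha(H)}{|H|}$, handled below) — or (i) $\frac{k_1}{n_1}=\frac{\alpha(H)}{|H|}$ with one of $G_1,H$ IS-imprimitive or both bipartite. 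The first alternative of (i) recurses into $H$; the ``both bipartite'' alternative forces $KG_{n_1,k_1}$ bipartite, i.e. $n_1=2k_1$, again excluded unless $n_1=2k_1=2$, in which case $KG_{2,1}=K_2$, pushing us into the disconnected/second-exception regime. If instead $G$ is \emph{not} MIS-normal, Theorem~\ref{imprimitive product kneser graph} says some pair of factors satisfies $n_i=2k_i\ge4$ (excluded) or three indices with $n_i=n_j=n_\ell=2$ — but $n_i=2,k_i\in\{1\}$ and $n_i=2,k_i=1$ already lands in the second exception. An induction on $p$ (base case $p=1$ being exactly Proposition~\ref{imprimitive Kneser graph}) closes the recursion in the MIS-normal branch.

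The main obstacle, as the sketch above already shows, is bookkeeping the degenerate ratio-$\tfrac12$ factors ($n_i=2k_i$, and especially $n_i=2k_i=2$) cleanly: these are simultaneously the source of bipartiteness, of equal density $\frac{\alpha}{|\cdot|}=\frac12$, and of disconnectedness, so the three bullets of Theorem~\ref{normal-imprimitive} and the exceptional clauses of Theorem~\ref{imprimitive product kneser graph} all interact there. I would handle this by first peeling off \emph{all} factors with $n_i=2k_i$ (grouping them as a single subproduct $G'$ which is either disconnected, if any such factor is a $K_2$ with $\ge 2$ of them or is the lone $K_2$, or a product of bipartite Kneser graphs), treat $G=G'\times G''$ where $G''$ has all factors with $n_i>2k_i$ (hence $KG_{n_i,k_i}$ IS-primitive and connected by Propositions~\ref{imprimitive Kneser graph} and~\ref{imprimitive product kneser graph}), and then it suffices to observe: $G''$ by itself is MIS-normal and IS-primitive by induction, and attaching $G'$ either disconnects $G$ (second exception, or the single-$K_2$ degenerate case) or, when $G'$ is a nonempty product of bipartite non-disconnected Kneser graphs (so some $n_i=2k_i\ge4$, first exception), makes $G$ IS-imprimitive via the explicit preimage construction — and in all remaining cases $G'$ is empty and $G=G''$ is IS-primitive. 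This organizes the case analysis so that each of the two exceptional families in the statement corresponds to exactly one structural phenomenon.
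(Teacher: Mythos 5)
There is a genuine error at the heart of your bookkeeping of the degenerate factors: you assert that $KG_{2,1}=K_2$ is disconnected, and from this you conclude both that a single factor with $n_i=2$, $k_i=1$ "already forces disconnectedness" of $G$ and that the statement of Proposition~\ref{note1} therefore needs to be amended. This is false. $K_2$ is a connected graph (and, as used in the paper's argument, it is IS-primitive: its only nonempty independent sets are singletons, which are maximum). What is disconnected is $KG_{2k,k}$ for $k\ge 2$, and also the product $K_2\times K_2$ — so your conclusion in the two-$K_2$ case happens to be right, but for the wrong reason. More importantly, a lone $K_2$ factor does \emph{not} disconnect $G$: $K_2\times H$ is disconnected only when $H$ is bipartite or disconnected, and if every other factor has $n_j>2k_j$ then each $KG_{n_j,k_j}$ is connected and non-bipartite (its chromatic number is $n_j-2k_j+2\ge 3$), hence so is their product $H$, and $G=K_2\times H$ is connected. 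In fact this $G$ is IS-primitive, exactly as the proposition claims: since $\frac{\alpha(K_2)}{|K_2|}=\frac12>\frac{\alpha(H)}{|H|}$ and $H$ is connected, Theorem~\ref{tensor product} forces $G$ to be MIS-normal, and then Theorem~\ref{normal-imprimitive}(ii) would require $K_2$ to be IS-imprimitive, which it is not. So the proposition is correct as stated, and no reinterpretation ("$p\ge2$ implicit", "connected $G$ intended") is needed.

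This error is not cosmetic, because your proposed organization of the "only if" direction — peel off all factors with $n_i=2k_i$ into $G'$ and declare that "attaching $G'$ either disconnects $G$ (second exception, or the single-$K_2$ degenerate case)" — misclassifies precisely the delicate case the proposition has to handle: exactly one factor equal to $K_2$ and all remaining factors with $n_j>2k_j$. Your scheme would label this case IS-imprimitive, i.e.\ it proves a statement that contradicts Proposition~\ref{note1}, whereas the paper's induction (using that $KG_{2,1}$ is IS-primitive and that the "both bipartite" alternative of Theorem~\ref{normal-imprimitive} forces a \emph{second} factor with $n_j=2k_j$, via $\chi\cdot\alpha\ge |V|$) shows it is IS-primitive. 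The remainder of your outline (induction on $p$, combining Proposition~\ref{imprimitive Kneser graph} with Theorems~\ref{tensor product}, \ref{normal-imprimitive} and \ref{imprimitive product kneser graph}, and the explicit preimage construction for the "if" direction when some $n_i=2k_i\ge4$) follows the paper's route and is fine, but the treatment of the ratio-$\frac12$ factors must be redone with the correct facts: $K_2$ is connected and IS-primitive; disconnectedness of $G$ arises from a factor $KG_{2k,k}$ with $k\ge2$ or from the presence of at least two bipartite factors (in particular two copies of $K_2$), not from one.
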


\begin{proof}
Note that if the Kneser graph $KG_{n,k}$ is disconnected, then $n=2k\geq 4$ and $KG_{n,k}$ is bipartite. Thus by Proposition \ref{imprimitive Kneser graph}, $KG_{2k,k}$ is IS-imprimitive for all $k\geq 2$. Moreover, since $\chi({KG_{n,k}})=n-2k+2$ for all $n\geq 2k$ (Lov\'{a}sz-Kneser Theorem, see \cite{Lovasz78}), we know that if $KG_{n,k}$ is bipartite, then $n=2k\geq 2$. Now we use induction on the number of factors $p$.

If $p=2$, let $G_1=KG_{n_1,k_1}$, $G_2=KG_{n_2,k_2}$, and $G=G_1\times G_2$. W.l.o.g., assume that $\frac{\alpha(G_1)}{|G_1|}\ge\frac{\alpha(G_2)}{|G_2|}$. Then, by Theorem \ref{tensor product}, (i) $G_1\times G_2$ is MIS-normal, or (ii) $\frac{\alpha(G_1)}{|G_1|}=\frac{\alpha(G_2)}{|G_2|}$ and one of $G_1$ and $G_2$ is IS-imprimitive, or (iii) $\frac{\alpha(G_1)}{|G_1|}>\frac{\alpha(G_2)}{|G_2|}$ and $G_2$ is disconnected. For case (i), by Theorem \ref{normal-imprimitive}, at least one factor of $G$ is IS-imprimitive or both $G_1$ and $G_2$ are bipartite. Noticed that $KG_{2,1}$ is IS-primitive, therefore, either there exists an $i\in [2]$ such that $n_i=2k_i\ge 4$ or there exist distinct $i,j\in[2]$ such that $n_i=n_j= 2k_i=2k_j=2$.  For cases (ii) and (iii), since $G$ is not MIS-normal, by Theorem \ref{imprimitive product kneser graph}, at least one of $G_1$ and $G_2$ is IS-imprimitive. Thus the proposition holds when $p=2$.

Suppose the proposition holds when the number of factors is $p-1$. Set $G'_1=\prod_{i=1}^{p-1}KG_{n_i,k_i}$ and $G'_2=KG_{n_p,k_p}$, by Theorem \ref{normal-imprimitive}, at least one factor of $G'_1$ and $G'_2$ is IS-imprimitive or both $G'_1$ and $G'_2$ are bipartite. If $G_1'$ is IS-imprimitive, by the induction hypothesis, there exists an $i'\in[p-1]$ such that $n_{i'}=2k_{i'}\ge 4$ or there exist distinct $i',j'\in[p-1]$ such that $n_{i'}=n_{j'}=2k_{i'}=2k_{j'}=2$. If $G_2'$ is IS-imprimitive, then $n_p=2k_p\geq 4$. Otherwise, both $G'_1$ and $G'_2$ are IS-primitive and bipartite. Thus, for $G'_2$, we have $n_p=2k_p=2$. For $G'_1$, since $\chi(G'_1)\cdot\alpha(G'_1)\ge |V(G'_1)|$, we know that there exists $i'\in[p-1]$ such that $n_{i'}=2k_{i'}=2$ by Lemma \ref{tensor product}. This completes the proof.
\end{proof}
%
%

The idea of the proof for Theorem~\ref{main0} is similar to that for general connected symmetric systems in \cite{WZ11}. Since $\prod_{i=1}^{p}\text{KG}_{n_i,k_i}$ is a vertex transitive graph, by Lemma \ref{lemmaimprimitive}, we can prove the bound (\ref{bound1}). Then, through a careful analysis, we can obtain the structure of all imprimitive independent sets of this graph. This leads to the unique structures of $\mathcal{A}_1$ and $\mathcal{A}_2$ in $(\ref{formula})$.

\begin{proof}[\bf{Proof of Theorem~\ref{main0}}]
Define a graph $G$ on the vertex set $X=\prod_{s\in[p]}{[n_s]\choose k_s}$ with $A,B\in X$ forming an edge in $G$ if and only if $A\cap B=\emptyset$. Therefore, $G$ is the direct product of Kneser graphs $\text{KG}_{n_1,k_1}\times\cdots\times\text{KG}_{n_p,k_p}$.

Assume that $2\leq\frac{n_1}{k_1}\leq\frac{n_2}{k_2}\leq\ldots\leq\frac{n_p}{k_p}$, then $\frac{|G|}{\alpha(G)}=\frac{n_1}{k_1}$  by Theorem~\ref{tensor product}. Following the notations of Borg in \cite{B09,B10,BL10}, write $\mathcal{A}^\ast_i=\{A\in\mathcal{A}_i|A\cap B\ne\emptyset\text{ for any }B\in\mathcal{A}_i\}$, $\hat{\mathcal{A}}_i=\mathcal{A}_i\setminus\mathcal{A}_i^\ast$, $\mathcal{A}^\ast=\bigcup_{i=1}^m\mathcal{A}_i^{\ast}$, $\hat{\mathcal{A}}=\bigcup_{i=1}^m\hat{\mathcal{A}}_i$. Note that $\bar{N}_{G}[\mathcal{A}]=\{B\in X|A\cap B\ne\emptyset,\text{ for any }A\in\mathcal{A}\}$ for $\mathcal{A}\subseteq X$, it is easy to show that $\mathcal{A}^\ast$ is an intersecting family and $\hat{\mathcal{A}}\subseteq\bar{N}_{G}[\mathcal{A}^\ast]$. It follows that $\mathcal{A}_i\cap\mathcal{A}_j\subseteq\mathcal{A}_i^\ast\cap\mathcal{A}_j^\ast$ from the definition, therefore $\hat{\mathcal{A}}_i\cap\hat{\mathcal{A}}_j=\emptyset$ for $i\ne j$, and $|\hat{\mathcal{A}}|=\sum_{i=1}^m|\hat{\mathcal{A}}_i|$. Thus by Lemma~\ref{lemmaimprimitive} we have
\begin{align*}\label{imprimitive equation}
\sum\limits_{i=1}^m|\mathcal{A}_i|&=\sum\limits_{i=1}^m|\hat{\mathcal{A}}_i|+\sum\limits_{i=1}^m|\mathcal{A}_i^\ast|\leq|\hat{\mathcal{A}}|+m|\mathcal{A}^\ast|\leq|\bar{N}_{G}[\mathcal{A}^\ast]|+m|\mathcal{A}^\ast|\nonumber\\
&=\frac{|G|}{\alpha(G)}(\frac{\alpha(G)}{|G|}|\bar{N}_{G}[\mathcal{A}^\ast]|+|\mathcal{A}^\ast|)+(m-\frac{|G|}{\alpha(G)})|\mathcal{A}^\ast|\\
&\leq |G|+(m-\frac{|G|}{\alpha(G)})|\mathcal{A}^\ast|=|G|+(m-\frac{n_1}{k_1})|\mathcal{A}^\ast|.\nonumber
\end{align*}

If $m<\frac{n_1}{k_1}$, then $\sum_{i=1}^m|\mathcal{A}_i|\leq|G|$, and the equality implies $\mathcal{A}^\ast=\emptyset$. Thus $\mathcal{A}_i=\hat{\mathcal{A}_i}$ for every $i\in[m]$, and this yields that the graph $G$ is a disjoint union of the induced subgraph $G[\mathcal{A}_i]'s$. And by the cross-intersecting property, each $G[\mathcal{A}_i]$ is a connected component of $G$. Since $G$ is connected when $\frac{n_s}{k_s}>2$ for all $s\in[p]$ and $m\geq 2$, we know that one of $\mathcal{A}_i$ is $X$ and the rest are empty sets, as case (i).

If $m>\frac{n_1}{k_1}$, then $\sum_{i=1}^m|\mathcal{A}_i|\leq m\alpha(G)$, and the equality implies that $\mathcal{A}_1^\ast=\ldots=\mathcal{A}_m^\ast=\mathcal{A}^\ast$, $|\mathcal{A}^\ast|=\alpha(G)$, as case (ii).

If $m=\frac{n_1}{k_1}$, then $\sum_{i=1}^m|\mathcal{A}_i|\leq|X|$, and the equality implies that $\mathcal{A}_1^\ast=\ldots=\mathcal{A}_m^\ast=\mathcal{A}^{\ast}$ and $\frac{\alpha(G)}{|G|}|\bar{N}_{G}[\mathcal{A}^\ast]|+|\mathcal{A}^\ast|=\alpha(G)$. By Lemma \ref{lemmaimprimitive}, we know that $|\mathcal{A}^\ast|=0$, or $|\mathcal{A}^\ast|=\alpha(G)$, or $\mathcal{A}^\ast$ is an imprimitive independent set of $G$. In the last case, $\hat{\mathcal{A}}_1,\ldots,\hat{\mathcal{A}}_m$ are cross-intersecting families and form a partition of $\bar{N}_{G}[\mathcal{A}^\ast]$. In order to determine the structures of the maximum-sized cross-intersecting families in this case, we shall characterize the imprimitive independent set of $G$.

\begin{claim}\label{imprimitive independent set}
Let $\mathcal{F}=\prod_{s\in S}{[n_s]\choose k_s}$ and $X'=\prod_{s\in[p]\setminus S}{[n_s]\choose k_s}$, where $S=\{s\in [p]:~\frac{n_s}{k_s}=2\}$. If $\mathcal{A}^\ast$ is an imprimitive independent set of $G$, then $\mathcal{A}^\ast=\mathcal{A}\times X'$, where $\mathcal{A}\subseteq\mathcal{F}$ is a non-maximum intersecting family.
\end{claim}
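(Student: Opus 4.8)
The plan is to show that an imprimitive independent set $\mathcal{A}^{\ast}$ of $G=\prod_{i\in[p]}\mathrm{KG}_{n_i,k_i}$ must be ``supported'' entirely on the bipartite (i.e. $n_s=2k_s$) factors. Write $G=G_{\mathcal{F}}\times G_{X'}$, where $G_{\mathcal{F}}=\prod_{s\in S}\mathrm{KG}_{n_s,k_s}$ and $G_{X'}=\prod_{s\in[p]\setminus S}\mathrm{KG}_{n_s,k_s}$, so that $\frac{\alpha(G_{X'})}{|G_{X'}|}<\tfrac12=\frac{\alpha(G_{\mathcal{F}})}{|G_{\mathcal{F}}|}$ (unless $S=[p]$, a degenerate case handled the same way). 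By Theorem~\ref{imprimitive product kneser graph}, $G$ is MIS-normal precisely outside the exceptional configurations, and by Proposition~\ref{note1} those exceptional configurations are exactly the ones that force $S\ne\emptyset$; so $G$ is IS-imprimitive. I would first invoke Theorem~\ref{normal-imprimitive} applied to the splitting $G=G_{\mathcal{F}}\times G_{X'}$: since $\frac{\alpha(G_{X'})}{|G_{X'}|}<\frac{\alpha(G_{\mathcal{F}})}{|G_{\mathcal{F}}|}$ and $G$ is MIS-normal and IS-imprimitive, case (ii) of that theorem gives that $G_{\mathcal{F}}$ is IS-imprimitive while $G_{X'}$ is IS-primitive, hence connected.

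Next I would pin down the shape of $\mathcal{A}^{\ast}$ using MIS-normality together with Lemma~\ref{subgraph independent set}. Because $\mathcal{A}^{\ast}$ is imprimitive, $\frac{|\mathcal{A}^{\ast}|}{|N_G[\mathcal{A}^{\ast}]|}=\frac{\alpha(G)}{|G|}$, so by Lemma~\ref{subgraph independent set} there is a maximum independent set $S\in I(G)$ with $\mathcal{A}^{\ast}\subseteq S$. By MIS-normality of $G$ (equivalently, iterating the MIS-normality of the two-factor product $G_{\mathcal{F}}\times G_{X'}$), every maximum independent set of $G$ is the preimage under a projection of an independent set of one factor; since $\frac{\alpha(G_{X'})}{|G_{X'}|}<\frac{\alpha(G_{\mathcal{F}})}{|G_{\mathcal{F}}|}$, the only factor that can realize $\alpha(G)=\alpha(G_{\mathcal{F}})|G_{X'}|$ is $G_{\mathcal{F}}$, so $S=I_0\times X'$ for some $I_0\in I(G_{\mathcal{F}})$. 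Thus $\mathcal{A}^{\ast}\subseteq I_0\times X'$. It remains to upgrade this to $\mathcal{A}^{\ast}=\mathcal{A}\times X'$ for a single $\mathcal{A}\subseteq\mathcal{F}$, i.e. that $\mathcal{A}^{\ast}$ is a ``cylinder'' over the $X'$-coordinates. Here I would argue by a closure/neighbourhood computation: for a cylinder $\mathcal{A}\times X'$ one has $N_G[\mathcal{A}\times X']=N_{G_{\mathcal{F}}}[\mathcal{A}]\times X'$, and the imprimitivity ratio $\frac{|\mathcal{A}^{\ast}|}{|N_G[\mathcal{A}^{\ast}]|}=\frac{\alpha(G)}{|G|}$ forces $\mathcal{A}^{\ast}$ to meet every fibre $\{A\}\times X'$ (for $A\in I_0$) in a set whose closure behaves like all of $X'$; connectedness of $G_{X'}$ and the equality case of Lemma~\ref{lemmaimprimitive}/Theorem~\ref{CK} applied to the subgraph induced on a fibre then force each nonempty fibre to be all of $X'$. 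Finally $\mathcal{A}=\{A\in\mathcal{F}: \{A\}\times X'\subseteq\mathcal{A}^{\ast}\}$ is an independent set of $G_{\mathcal{F}}$, i.e. an intersecting subfamily of $\mathcal{F}$, and it is non-maximum because $|\mathcal{A}^{\ast}|=|\mathcal{A}|\cdot|X'|<\alpha(G)=\alpha(G_{\mathcal{F}})|X'|$ gives $|\mathcal{A}|<\alpha(G_{\mathcal{F}})$.

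The main obstacle I expect is the last step: proving that an imprimitive independent set must be a full cylinder over the primitive part $X'$, rather than merely a subset of one maximum independent set $I_0\times X'$. The subtlety is that a priori $\mathcal{A}^{\ast}$ could select different sub-fibres over different $A\in I_0$; ruling this out requires using the equality condition in Theorem~\ref{CK} (or Lemma~\ref{lemmaimprimitive}) fibrewise together with the IS-primitivity (in particular connectedness) of $G_{X'}$ to conclude that within each fibre the only way to maintain the exact ratio $\frac{\alpha(G)}{|G|}$ is to take the whole fibre or nothing. A clean way to package this is: set $B=I_0\times X'$; apply Theorem~\ref{CK} with this $B$ to the vertex-transitive graph $G$ to get $|\mathcal{A}^{\ast}\cap B|=\alpha(G[B])$; note $G[B]$ is a disjoint union of copies of $G_{X'}$ indexed by $A\in I_0$ together with edges coming from pairs $A\cap A'=\emptyset$ inside the bipartite factors, and analyse its maximum independent sets — since $G_{X'}$ is IS-primitive, within the ``component structure'' of $G[B]$ the only imprimitive independent sets are unions of whole $G_{X'}$-fibres, which is exactly the cylinder condition. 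Everything else (the three ratio comparisons, the translation between Kneser-graph independent sets and intersecting families, and the bookkeeping $|\mathcal{A}|<\alpha(G_{\mathcal{F}})$) is routine.
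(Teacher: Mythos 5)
There is a genuine gap, and it sits exactly where you predicted the main obstacle would be: upgrading $\mathcal{A}^{\ast}\subseteq I_0\times X'$ to a full cylinder. Your proposed packaging does not work. If $B=I_0\times X'$ with $I_0\in I(G_{\mathcal{F}})$, then $I_0$ is an intersecting family, so two vertices of $B$ can never have disjoint $\mathcal{F}$-coordinates; hence $G[B]$ is \emph{edgeless} (it is not a disjoint union of copies of $G_{X'}$ plus extra edges), $\alpha(G[B])=|B|$, and Theorem~\ref{CK} applied with this $B$ gives only the trivial bound $|\mathcal{A}^{\ast}|\le |G|$ with an equality case that never triggers. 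The same problem kills the ``fibrewise'' variant: the subgraph induced on a fibre $\{C\}\times X'$ is also edgeless, and moreover the fibre sets $\mathcal{D}_C=\{D: C\times D\in\mathcal{A}^{\ast}\}$ need not be independent in $G_{X'}$, so neither Theorem~\ref{CK} nor Lemma~\ref{lemmaimprimitive} applies to them. The leverage has to come from the \emph{cross-fibre} neighbourhood structure, which your argument never engages: $N_G[\{C\}\times\mathcal{D}_C]=(\{C\}\times\mathcal{D}_C)\cup(\{\bar{C}\}\times N_{G_{X'}}(\mathcal{D}_C))$, these closed neighbourhoods are pairwise disjoint for distinct $C$ (this uses that $\mathcal{A}^{\ast}$ is intersecting), and one then needs the expansion inequality $|\mathcal{D}_C|\le |N_{G_{X'}}(\mathcal{D}_C)|$ for an \emph{arbitrary} nonempty $\mathcal{D}_C\subseteq X'$, with equality only when $\mathcal{D}_C=X'$. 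The paper obtains precisely this by applying the already-established case $m<\min_s n_s/k_s$ of Theorem~\ref{main0} to the cross-intersecting pair $(\mathcal{D}_C,\ \bar{N}_{G_{X'}}(\mathcal{D}_C))$ inside $X'$, together with connectedness of $G_{X'}$; summing over the fibres and using the imprimitivity ratio then forces every fibre to be all of $X'$. Without some substitute for this expansion step, containment in one maximum independent set is far weaker than the cylinder conclusion (any subset of $I_0\times X'$ is independent).

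A secondary, repairable issue: you invoke MIS-normality of $G$ as a $p$-fold product, but by Theorem~\ref{imprimitive product kneser graph} this fails in cases that are central here (e.g.\ two factors with $n_i=2k_i\ge 4$), and the exceptional configurations there are not the same as the IS-imprimitivity criterion of Proposition~\ref{note1}. What is true, and what your parenthetical hints at, is that the two-factor split $G_{\mathcal{F}}\times G_{X'}$ is MIS-normal by Theorem~\ref{tensor product}, because $\alpha(G_{X'})/|G_{X'}|<1/2=\alpha(G_{\mathcal{F}})/|G_{\mathcal{F}}|$ and $G_{X'}$ is connected (each factor has $n_s>2k_s$, so it is connected and non-bipartite; IS-primitivity of $G_{X'}$ comes from Proposition~\ref{note1}, not from case (ii) of Theorem~\ref{normal-imprimitive}, which says nothing about the smaller-ratio factor). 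That salvages $\mathcal{A}^{\ast}\subseteq I_0\times X'$, but, as explained above, that containment alone does not close the proof; note also that the paper's own argument never needs it.
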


According to Proposition \ref{note1}, $G$ is IS-imprimitive if and only if there exists an $i\in S$ such that $n_i=2k_i\ge 4$ or there exist distinct $i,j\in S$ such that $n_i=n_j=2$ and $k_i=k_j=1$. Thus, with the assumptions in this claim, $S\neq \emptyset$ and $S=\{i_0\}$ if and only if $n_{i_0}=2k_{i_0}\geq 4$ for some $i_0\in[p]$. W.l.o.g., assume that $S=[s_0]$, where $s_0=|S|$. Under this circumstance, $m=\frac{n_1}{k_1}=2$.

Divide $\mathcal{A}^\ast$ into $u$ disjoint parts $\{C_i\times \mathcal{D}_i\}_{i=1}^{u}$, where $C_i=C_{i,1}\times\ldots\times C_{i,s_0}\in\mathcal{F}$, $\mathcal{D}_i\subseteq X'$ for all $i\in [u]$ and $C_i\ne C_j$ for any $i\ne j\in[u]$. Since $N_G(C_i\times \mathcal{D}_i)=\bar{C_i}\times\mathcal{D}_i'$, where $\mathcal{D}'_i=\{A\in X': A\cap D_i=\emptyset\text{~for~some~} D_i\in\mathcal{D}_i\}$, we know that $N_G[C_i\times \mathcal{D}_i]\cap N_G[C_j\times \mathcal{D}_j]=\emptyset$ for all $i\ne j\in[u]$. Meanwhile, $C_i\times \mathcal{D}_i\cap N_G(C_j\times \mathcal{D}_j)=\emptyset$ for all $i\ne j\in[u]$. Otherwise, assume that there exists $T_1\times T_2\in C_i\times\mathcal{D}_i\cap N_G(C_j\times\mathcal{D}_j)$, for some $T_1\in\mathcal{F}$ and $T_2\in X'$. Thus we have $T_1\times T_2\cap C_j\times D_j=\emptyset$, for some $D_j\in\mathcal{D}_j$, which contradicts the fact that $\mathcal{A}^\ast$ is an intersecting family.


By projecting $G$ onto the last $p-s_0$ factors, we obtain a graph $G'$ with vertex set $X'$ such that $A,B\in X'$ form an edge in $G'$ if and only if $A,B$ are disjoint. Consider the cross-intersecting families $\{\mathcal{D}_i,\bar{N}_{G'}(\mathcal{D}_i)\}$ in $X'$, since $|\{\mathcal{D}_i,\bar{N}_{G'}(\mathcal{D}_i)\}|=2<\frac{n_{s_0+1}}{k_{s_0+1}}$, by case (i), we know that
\begin{flalign}
|\mathcal{D}_i|+|\bar{N}_{G'}(\mathcal{D}_i)|=|\mathcal{D}_i|+|X'|-|N_{G'}(\mathcal{D}_i)|\leq|X'|,\nonumber
\end{flalign}
thus we have $|\mathcal{D}_i|\leq|N_{G'}(\mathcal{D}_i)|$, and $|C_i\times \mathcal{D}_i|=|\mathcal{D}_i|\leq |N_{G'}(\mathcal{D}_i)|=|N_G(C_i\times \mathcal{D}_i)|$. Therefore
\begin{flalign*}
\frac{|\mathcal{A}^\ast|}{|N_G[\mathcal{A}^\ast]|}=\frac{\sum_{i\in[u]}|C_i\times\mathcal{D}_i|}{\sum_{i\in[u]}|N_G[C_i\times\mathcal{D}_i]|}\leq\frac{1}{2}=\frac{\alpha(G)}{|G|}=\frac{k_1}{n_1},
\end{flalign*}
and the equality holds if and only if for all $i\in[u]$, $\mathcal{D}_i=X'$ or $\bar{N}_{G'}(\mathcal{D}_i)=X'$. Since $\mathcal{D}_i\ne\emptyset$, we have $\mathcal{A}^\ast=\bigsqcup_{i=1}^{u}C_i\times X'=\mathcal{A}\times X'$. Recall that $\frac{n_s}{k_s}>2$ for all $s>s_0$, hence $C_i\cap C_j\neq \emptyset$ for any $i\neq j\in[u]$. Therefore, by the imprimitivity of $\mathcal{A}^\ast$, $\mathcal{A}^\ast$ is a non-maximum independent set of $G$, thus $\mathcal{A}\subseteq\mathcal{F}$ is a non-maximal intersecting family and the claim holds.

For every intersecting family $\mathcal{A}\subseteq\mathcal{F}$, since $\frac{n_s}{k_s}=2$ for all $s\in {S}$, then $\mathcal{A}=\{A_1,A_2,\ldots,A_w\}\times\prod_{s\in S\setminus S'}{[n_s]\choose k_s}$ for some nonempty subset $S'\subseteq S$, where $\{A_1,\ldots,A_w\}\subseteq\prod_{s\in S'}{[n_s]\choose k_s}$ satisfying $A_i\neq \bar{A}_j$ for all $i\neq j\in [w]$. In particular, if $\mathcal{A}$ is a maximum intersecting family, we can obtain that $\bigsqcup_{j=1}^{w}\{A_j,\bar{A}_j\}=\prod_{s\in S'}{[n_s]\choose k_s}$ and $2w=\prod_{s\in S'}{n_s\choose k_s}$.

Therefore, $\mathcal{A}^\ast=\{A_1,A_2,\ldots,A_{w_0}\}\times\prod_{s\in S\setminus S_1}{[n_s]\choose k_s}\times X'$ and $N_G(\mathcal{A}^\ast)=\{\bar{A}_1,\bar{A}_2,\ldots,\bar{A}_{w_0}\}\times\prod_{s\in S\setminus S_1}{[n_s]\choose k_s}\times X'$, for some positive integer $w_0<\frac{\prod_{s\in S_1}{n_s\choose k_s}}{2}$ and nonempty subset $S_1\subseteq S$.

From the structure of the imprimitive independent set $\mathcal{A}^\ast$, we know that
\begin{flalign*}
\bar{N}_G[\mathcal{A}^\ast]=\{E_1,\bar{E}_1,E_2,\bar{E}_2,\ldots,E_v,\bar{E}_v\}\times\prod\limits_{s\in S\setminus S_1}{[n_s]\choose k_s}\times X',
\end{flalign*}
where $\emptyset\neq\{E_1,\ldots,E_v\}\subseteq\prod_{s\in S_1}{[n_s]\choose k_s}$, and $\bigsqcup_{j=1}^{w_0}\{A_j,\bar{A}_j\}\sqcup\bigsqcup_{j=1}^v\{E_j,\bar{E}_j\}=\prod_{s\in S_1}{[n_s]\choose k_s}$.

Since $E_j\times\prod_{s\in S\setminus S_1}{[n_s]\choose k_s}\times X'$ and $\bar{E}_j\times\prod_{s\in S\setminus S_1}{[n_s]\choose k_s}\times X'$ must be contained in the same one of $\hat{\mathcal{A}_1}$, $\hat{\mathcal{A}_2}$, we have
\begin{flalign}
\hat{\mathcal{A}}_1&=(\mathcal{E}\cup\tilde{\mathcal{E}})\times \prod\limits_{s\in S\setminus S_1}{[n_s]\choose k_s}\times X',\nonumber\\
\hat{\mathcal{A}}_2&=(\mathcal{E}'\cup\tilde{\mathcal{E}}')\times \prod\limits_{s\in S\setminus S_1}{[n_s]\choose k_s}\times X',\nonumber
\end{flalign}
where $\mathcal{E}\sqcup\mathcal{E}'=\{E_1,\ldots,E_v\}$ and $\tilde{\mathcal{E}}\sqcup\tilde{\mathcal{E}}'=\{\bar{E}_1,\ldots,\bar{E}_v\}$. Here we denote $\tilde{\mathcal{E}}=\{\bar{E}_{i_1},\ldots,\bar{E}_{i_l}\}$ if $\mathcal{E}=\{E_{i_1},\ldots,E_{i_l}\}\subseteq\prod_{s\in S_1}{[n_s]\choose k_s}$, for some subset $\{i_1,\ldots,i_l\}\subseteq[v]$.

Finally, to sum up,
\begin{flalign}
\mathcal{A}_1&=\mathcal{A}^\ast\sqcup\hat{\mathcal{A}}_1=(\mathcal{A}\times X')\sqcup ((\mathcal{E}\cup\tilde{\mathcal{E}})\times \prod\limits_{s\in S\setminus S_1}{[n_s]\choose k_s}\times X')\nonumber,\\
\mathcal{A}_2&=\mathcal{A}^\ast\sqcup\hat{\mathcal{A}}_2=(\mathcal{A}\times X')\sqcup ((\mathcal{E}'\cup\tilde{\mathcal{E}}')\times \prod\limits_{s\in S\setminus S_1}{[n_s]\choose k_s}\times X')\nonumber.
\end{flalign}
\end{proof}

\section{Proof of Theorem \ref{main1}}

Throughout this section, we denote $S_n$ as the symmetric group on $[n]$ and $S_C$ as the symmetric group on $C$ for $C\subseteq [n]$. For each $i\in[p]$, let $X_i$ be a finite set, then for each family $\mathcal{A}\subseteq \prod_{i\in[p]}X_i$, we denote $\mathcal{A}|_i$ as the projection of $\mathcal{A}$ onto the $i$-th factor.

For the proof of Theorem~\ref{main1}, we need the following proposition obtained by Wang and Zhang in \cite{WZ13}. 
\begin{prop}\emph{(\cite{WZ13})}\label{fragment2}
Let $G(X,Y)$ be a non-complete bipartite graph with $|X|=|Y|$ and $\epsilon(X)=d(X)-1$, and let $\Gamma$ be a group part-transitively acting on $G(X,Y)$. If each fragment of $G(X,Y)$ is primitive and there are no $2$-fragments in $\mathcal{F}(X,Y)$, then every nontrivial fragment $A\in \mathcal{F}(X)$ (if there exists) is balanced (see Remark~\ref{balanced}), and for each $a\in A$, there is a unique nontrivial fragment $B$ such that $A\cap B=\{a\}$.
\end{prop}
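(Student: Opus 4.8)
The plan is to run, in the spirit of Theorem~\ref{key00}, the fragment‑theoretic machinery of Wang–Zhang, sharpening its qualitative output into the precise structure by exploiting the two extra hypotheses $\epsilon(X)=d(X)-1$ and ``no $2$-fragments''. First I set up notation. Writing $d:=d(X)$, the hypotheses $|X|=|Y|$ and $\epsilon(X)=d-1$ give, via Lemma~\ref{lem01}, $d(Y)=d$, $\epsilon(Y)=d-1$, $\alpha:=\alpha(X,Y)=|X|-d+1$ and $|X|=|Y|=\alpha+d-1$; moreover (Remark~\ref{balanced}, Lemma~\ref{lem01}) $\phi$ is a $\Gamma$-equivariant involution of $\mathcal F(X,Y)$ exchanging $\mathcal F(X)$ and $\mathcal F(Y)$, with $|A|+|\phi(A)|=\alpha$, with $N(\phi(A))=X\setminus A$ for $A\in\mathcal F(X)$ (symmetrically in $Y$), and with $\phi(A\cup B)=\phi(A)\cap\phi(B)$. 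A fragment is trivial precisely when it, or its $\phi$-image, is a singleton, i.e.\ has size $1$ or $\alpha-1$; together with ``no $2$-fragments'' this forces $3\le|A|\le\alpha-3$ for every nontrivial fragment $A$, whence $\alpha\ge 6$. I also record that by Lemma~\ref{lem02}, whenever $|A|\le|\phi(A)|$ and $\emptyset\neq\gamma(A)\cap A\neq A$, the set $\gamma(A)\cap A$ is a fragment and hence has size $\neq 2$.

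Assume some nontrivial fragment exists and let $A_0$ be one of minimum size over $\mathcal F(X,Y)$; relabelling $X\leftrightarrow Y$ if necessary, $A_0\in\mathcal F(X)$, so $|A_0|\le|\phi(A_0)|$, i.e.\ $|A_0|\le\alpha/2$. I first show $A_0$ is semi-imprimitive: were it not, some $\gamma$ would give $2\le|\gamma(A_0)\cap A_0|<|A_0|$, and Lemma~\ref{lem02} would make $\gamma(A_0)\cap A_0$ a fragment of size in $[3,|A_0|-1]$ (no $2$-fragments), which is nontrivial (size $<|A_0|\le\alpha/2<\alpha-1$) and smaller than $A_0$ — a contradiction. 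Since all fragments are primitive, $A_0$ is not imprimitive, so there is $\gamma_0$ with $A_0\cap\gamma_0(A_0)=\{a_0\}$. The core of the proof is then to show $|A_0|=\alpha/2$. Applying Lemma~\ref{lem02} to $A_0$ gives $A_1:=A_0\cup\gamma_0(A_0)\in\mathcal F(X)$ with $|A_1|=2|A_0|-1$ and $|\phi(A_1)|=\alpha-2|A_0|+1\ge 3$, both nontrivial. Iterating the alternation ``enlarge by uniting with a translate meeting in one point / reduce by intersecting with a translate, or passing to $\phi$, always keeping the member no larger than its $\phi$-image so that Lemma~\ref{lem02} applies'' produces a chain of nontrivial fragments whose sizes, by the minimality of $|A_0|$ and the ban on size $2$, are squeezed into an impossible range unless $|A_0|=\alpha/2$. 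Granting this, suppose some nontrivial fragment $C$ had $|C|<\alpha/2$; since $|C|\le|\phi(C)|$, Lemma~\ref{lem02} lets one reduce $C$ through a strictly decreasing chain of nontrivial fragments (sizes staying $\ge 3$) down to a semi-imprimitive nontrivial fragment of size $<\alpha/2$, which can be taken as a new minimum‑size example — contradicting that minimum‑size nontrivial fragments are balanced. Hence no nontrivial fragment has size $<\alpha/2$, and applying $\phi$, none has size $>\alpha/2$: every nontrivial fragment, in particular every nontrivial fragment in $\mathcal F(X)$, has size exactly $k_0:=\alpha/2$, i.e.\ is balanced.

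For the pencil statement, fix a nontrivial fragment $A\in\mathcal F(X)$. If $B\neq B'$ are nontrivial fragments of $\mathcal F(X)$ with $A\cap B\neq\emptyset$, then Lemma~\ref{lem01}(ii) applies — the case $N(A\cup B)=Y$ is excluded since $|A\cup B|\le\alpha-1$ forces $\phi(A\cup B)\neq\emptyset$ — so $A\cap B\in\mathcal F(X)$; if $|A\cap B|\ge2$ it would be a nontrivial fragment of size $<k_0$, contradicting balancedness, so $|A\cap B|\le1$. The nontrivial fragments of $\mathcal F(X)$ form a $\Gamma$-invariant family with nonempty union, hence cover $X$, so each point of $X$ lies in a constant number $r\ge1$ of them. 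Given distinct nontrivial $B,B'$ through a point $a$ one has $B\cap B'=\{a\}$; computing with $\phi$ (using $|N(B)|=k_0+d-1$, $|Y|=2k_0+d-1$, $N(B)\cap N(B')\supseteq N(a)$ of size $d$, and $N(\phi(B)\cup\phi(B'))=X\setminus\{a\}$ together with Lemma~\ref{lem01}(ii) in $Y$) one gets $|\phi(B)\cap\phi(B')|=1$ and $\phi(B)\cup\phi(B')=Y\setminus N(a)$, of size $\alpha-1$; a third nontrivial fragment $B''$ through $a$ would then force $\phi(B')\cap\phi(B'')\supseteq(Y\setminus N(a))\setminus\phi(B)$, of size $k_0-1\ge 2$, contradicting the intersection bound in $Y$. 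Thus $r\le2$. And $r=1$ is impossible: it would make the nontrivial fragments a $\Gamma$-invariant partition of $X$, so each would be a block and hence imprimitive, contrary to primitivity. Therefore $r=2$: for each $a\in A$ there is exactly one nontrivial fragment $B\neq A$ through $a$, and $A\cap B=\{a\}$.

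The delicate point is the size‑chase in the second paragraph, establishing that a minimum‑size nontrivial fragment must be balanced. One needs a monotone quantity that strictly decreases under the enlarge/reduce operations and is forced to terminate at the forbidden value $2$ or below the floor $3$; the bookkeeping of which of a fragment and its $\phi$-image is the smaller one — so that Lemma~\ref{lem02} remains applicable — is precisely where $\epsilon(X)=d(X)-1$ and the absence of $2$-fragments are used. The remaining steps (the normalisation, the reduction to a semi-imprimitive fragment, and the pencil‑structure computation) are routine manipulations with Lemmas~\ref{lem01}, \ref{lem02} and the primitivity hypothesis.
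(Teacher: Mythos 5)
The paper itself gives no proof of Proposition~\ref{fragment2} --- it is imported verbatim from \cite{WZ13} as a preliminary --- so your argument can only be judged on its own terms, and on those terms it has a genuine gap at its center. The balancedness claim, which is the heart of the proposition and exactly the place where $\epsilon(X)=d(X)-1$ and the absence of $2$-fragments must do real work, is never actually established. After correctly reducing to a minimum-size nontrivial fragment $A_0$, showing it is semi-imprimitive and producing $\gamma_0$ with $\gamma_0(A_0)\cap A_0=\{a_0\}$, you assert that iterating ``enlarge/reduce'' operations via Lemma~\ref{lem02} squeezes the sizes into an impossible range unless $|A_0|=\alpha(X,Y)/2$, and in your closing paragraph you concede that one still ``needs a monotone quantity that strictly decreases'' --- that is precisely the missing proof, not a routine detail. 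Concretely: one application of Lemma~\ref{lem02} (equivalently, passing to $\phi(A_1)=\phi(A_0)\cap\gamma_0\phi(A_0)$) yields a nontrivial fragment of size $\alpha(X,Y)-2|A_0|+1$, and minimality of $|A_0|$ then gives only $3|A_0|\le\alpha(X,Y)+1$, which is no contradiction; nothing you have written shows that further iterations drive the size to the forbidden value $2$ or below $3$. Since the entire pencil argument of your third paragraph presupposes that every nontrivial fragment has size exactly $\alpha(X,Y)/2$, the proposition remains unproven as it stands. (A minor related slip: your very first step claims $|\phi(A_1)|=\alpha(X,Y)-2|A_0|+1\ge 3$, which already assumes $|A_0|<\alpha(X,Y)/2$; this should be stated explicitly as the contradiction hypothesis.)

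Two smaller points in the final part. The justification for applying Lemma~\ref{lem01}(ii) to distinct nontrivial $A,B\in\mathcal{F}(X)$ with $A\cap B\neq\emptyset$ --- ``$|A\cup B|\le\alpha(X,Y)-1$ forces $\phi(A\cup B)\neq\emptyset$'' --- is a non sequitur: $\phi$ is defined only on fragments, and a bound on $|A\cup B|$ does not by itself rule out $N(A\cup B)=Y$. The needed fact is true, but for a different reason: for $a\in A\cap B$ one has $N(a)\subseteq N(A)\cap N(B)$, so $|N(A)\cup N(B)|\le 2\bigl(\tfrac{\alpha(X,Y)}{2}+d(X)-1\bigr)-d(X)=|Y|-1<|Y|$ --- the same counting you later carry out on the $\phi$-side. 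With that repair, your covering argument ($r\le 2$ via the $\phi(B)\cup\phi(B')=Y\setminus N(a)$ computation, $r\neq 1$ via primitivity) and the resulting existence and uniqueness of $B$ with $A\cap B=\{a\}$ are sound --- but only once balancedness has actually been proved.
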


The proof of Theorem~\ref{main1} is divided into two parts: Firstly, we prove the bound (\ref{eq02}). Consider a non-complete bipartite graph defined by the multi-part cross-intersecting family. Through discussions about the primitivity of group $\prod_{i=1}^{p}S_{n_i}$ and careful evaluations about $|\mathcal{A}|+|\mathcal{Y}|-|N(\mathcal{A})|$, the bound (\ref{eq02}) follows from Theorem \ref{key01}. Secondly, based on a characterization of all nontrivial fragments in this bipartite graph, we determine all the structures of $\mathcal{A}$ and $\mathcal{B}$ when the bound (\ref{eq02}) is attained.

\begin{proof}[\bf{Proof of Theorem~\ref{main1}}]

With the assumptions in the theorem, we define a bipartite graph $G(\mathcal{X},\mathcal{Y})$ with $\mathcal{X}=\prod_{i=1}^{p}{[n_i]\choose t_i}$ and $\mathcal{Y}=\prod_{i=1}^{p}{[n_i]\choose s_i}$. For $A=\prod_{i=1}^{p}{A_i}\in \mathcal{X}$ and $B=\prod_{i=1}^{p}{B_i}\in \mathcal{Y}$ ($A_i\in {[n_i]\choose t_i}$ and $B_i\in {[n_i]\choose s_i}$, for every $1\leq i\leq p$), $(A,B)$ forms an edge in $G(\mathcal{X},\mathcal{Y})$ if and only if $A\cap B=\emptyset$, i.e., $A_i\cap B_i=\emptyset$ for each $1\leq i\leq p$.

It can be easily verified that $\prod_{i=1}^{p}S_{n_i}$ acts transitively on $\mathcal{X}$ and $\mathcal{Y}$, respectively, and preserves the property of cross-intersecting. Thus we have $d(\mathcal{X})=|N(A)|$ for each $A\in \mathcal{X}$, and $d(\mathcal{Y})=|N(B)|$ for each $B\in \mathcal{Y}$. Since, for each $A=\prod_{i=1}^{p}{A_i}\in \mathcal{X}$,
\begin{equation*}
N(A)=\{B=\prod_{i=1}^{p}{B_i}\in \mathcal{Y}:~A_i\cap B_i=\emptyset\text{ for each }1\leq i\leq p\}=\prod_{i=1}^{p}{[n_i]\setminus A_i\choose s_i},
\end{equation*}
we have $d(\mathcal{X})=|N(A)|=\prod_{i=1}^{p}{{n_i-t_i}\choose s_i}$. Similarly, $d(\mathcal{Y})=|N(B)|=\prod_{i=1}^{p}{{n_i-s_i}\choose t_i}$.

By Theorem~\ref{key01}, we obtain that
\begin{equation*}
\alpha(\mathcal{X},\mathcal{Y})=\max{\{|\mathcal{Y}|-d(\mathcal{X})+1, |\mathcal{A}'|+|\mathcal{Y}|-|N(\mathcal{A}')|, |\mathcal{B}'|+|\mathcal{X}|-|N(\mathcal{B}')|\}},
\end{equation*}
where $\mathcal{A}'$ and $\mathcal{B}'$ are minimum imprimitive subsets of $\mathcal{X}$ and $\mathcal{Y}$ respectively. Therefore, in order to estimate $\alpha(\mathcal{X},\mathcal{Y})$ accurately, more discussions about the sizes and the structures of the imprimitive subsets of $\mathcal{X}$ and $\mathcal{Y}$ are necessary.

\begin{claim}\label{imprimitive subset}
Let $\mathcal{A}$ and $\mathcal{B}$ be imprimitive subsets of $\mathcal{X}$ and $\mathcal{Y}$ respectively, then
\begin{align*}
\mathcal{A}&=\prod_{i\in T_1}{\{A_i,\bar{A_i}\}}\times\prod_{i\in T_2}{\{A_i\}}\times\prod_{i\in [p]\setminus(T_1\sqcup T_2)}{[n_i]\choose t_i},~\text{for some disjoint $T_1, T_2\subseteq [p]$},\\
\mathcal{B}&=\prod_{i\in R_1}{\{B_i,\bar{B_i}\}}\times\prod_{i\in R_2}{\{B_i\}}\times\prod_{i\in [p]\setminus(R_1\sqcup R_2)}{[n_i]\choose s_i},~\text{for some disjoint $R_1, R_2\subseteq [p]$},
\end{align*}
where $A_i\in {[n_i]\choose t_i}$, $B_i\in {[n_i]\choose s_i}$, $T_1\sqcup T_2\neq\emptyset$, $R_1\sqcup R_2\neq\emptyset$ and $T_2,R_2\neq[p]$. Furthermore, for each $i\in T_1$, $n_i=2t_i$ and for each $i\in R_1$, $n_i=2s_i$.
\end{claim}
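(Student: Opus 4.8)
The plan is to analyze an imprimitive subset $\mathcal{A}\subseteq\mathcal{X}=\prod_{i\in[p]}\binom{[n_i]}{t_i}$ directly from the definition: $1<|\mathcal{A}|<|\mathcal{X}|$ and for every $\gamma\in\Gamma=\prod_{i\in[p]}S_{n_i}$ one has $\gamma(\mathcal{A})\cap\mathcal{A}\in\{\mathcal{A},\emptyset\}$. First I would reduce to understanding the ``shape'' of $\mathcal{A}$ factor by factor. For a fixed coordinate $i$, consider the projection $\mathcal{A}|_i\subseteq\binom{[n_i]}{t_i}$ and, more importantly, the fibers: for $A\in\mathcal{A}|_i$ let $\mathcal{A}^{(i)}_A$ be the set of $(p-1)$-tuples appearing in the remaining coordinates among elements of $\mathcal{A}$ with $i$-th coordinate equal to $A$. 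Since the subgroup $S_{n_i}\leq\Gamma$ acting only on coordinate $i$ must send $\mathcal{A}$ to itself or to a disjoint copy, and since $S_{n_i}$ is transitive on $\binom{[n_i]}{t_i}$, I expect to show that all nonempty fibers $\mathcal{A}^{(i)}_A$ are equal (call the common value $\mathcal{C}_i\subseteq\prod_{j\neq i}\binom{[n_j]}{t_j}$) and that $\mathcal{A}|_i$ is itself a block for the $S_{n_i}$-action on $\binom{[n_i]}{t_i}$. Iterating over all coordinates, this forces $\mathcal{A}$ to be a product $\prod_{i\in[p]}\mathcal{B}_i$ with each $\mathcal{B}_i\subseteq\binom{[n_i]}{t_i}$, where $\mathcal{B}_i$ is either all of $\binom{[n_i]}{t_i}$ or a block for the (transitive) $S_{n_i}$-action; one of the $\mathcal{B}_i$ must be a proper block, since $|\mathcal{A}|<|\mathcal{X}|$, and we cannot have $\mathcal{B}_i$ a singleton in every coordinate unless $|\mathcal{A}|=1$.

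The second step is to classify the blocks of the action of $S_{n}$ on $\binom{[n]}{t}$. By Theorem~\ref{primitivity}, a proper nontrivial block exists precisely when the stabilizer of a $t$-set, namely $S_t\times S_{n-t}$, fails to be maximal in $S_n$; the classical fact is that this happens if and only if $n=2t$, in which case the unique system of blocks of size~$2$ is $\{\{A,\bar A\}:A\in\binom{[n]}{t}\}$, and there are no other proper nontrivial blocks. Hence each $\mathcal{B}_i$ is one of: the whole $\binom{[n_i]}{t_i}$; a singleton $\{A_i\}$; or, only when $n_i=2t_i$, a pair $\{A_i,\bar A_i\}$. Collecting the indices by type gives exactly the asserted form, with $T_1$ the set of pair-coordinates (forcing $n_i=2t_i$), $T_2$ the set of singleton-coordinates, and $[p]\setminus(T_1\sqcup T_2)$ the full-coordinates; $T_1\sqcup T_2\neq\emptyset$ since $\mathcal{A}\neq\mathcal{X}$, and $T_2\neq[p]$ since $|\mathcal{A}|>1$. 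The argument for $\mathcal{B}\subseteq\mathcal{Y}$ is identical with $t_i$ replaced by $s_i$.

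I expect the main obstacle to be the first step: rigorously showing that an imprimitive $\mathcal{A}$ must be a \emph{product} set coordinatewise, i.e. that the fibers do not genuinely depend on the chosen coordinate value. The clean way to push this through is to use the action of the ``diagonal-type'' elements and transpositions in the single-coordinate factors $S_{n_i}$: if two fibers $\mathcal{A}^{(i)}_A$ and $\mathcal{A}^{(i)}_{A'}$ were nonempty but unequal, one picks $\gamma\in S_{n_i}$ with $\gamma(A)=A'$ and observes that $\gamma(\mathcal{A})\cap\mathcal{A}$ is neither empty nor all of $\mathcal{A}$ (it contains the $A'$-fiber data but misses some of it), contradicting blockhood; one also needs that $\mathcal{A}|_i$ being a union of $S_{n_i}$-translates with the block property is itself a block, which follows because $\Gamma_a$-maximality questions factor through each coordinate. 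Once product structure is in hand, the block classification via Theorem~\ref{primitivity} is routine, so the write-up should be short after the structural reduction is nailed down.
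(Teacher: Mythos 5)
Your route is the same as the paper's: project to each coordinate, classify the possible blocks of $S_{n_i}$ acting on $\binom{[n_i]}{t_i}$ via maximality of the stabilizer $S_{t_i}\times S_{n_i-t_i}$ (Theorem~\ref{primitivity}), with the pairs $\{A_i,\bar A_i\}$ as the only nontrivial blocks when $n_i=2t_i$. That part is fine. The problem is exactly the step you yourself flag as the main obstacle, and your proposed fix does not work. If the fibers $\mathcal{A}^{(i)}_A$ and $\mathcal{A}^{(i)}_{A'}$ are nonempty, \emph{disjoint} and unequal, then for $\gamma\in S_{n_i}$ with $\gamma(A)=A'$ the intersection $\gamma(\mathcal{A})\cap\mathcal{A}$ can simply be empty, which is perfectly consistent with $\mathcal{A}$ being a block; no contradiction arises. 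Moreover the failure is not repairable in general, because the product-structure conclusion is false when two coordinates have $n_i=2t_i$: take $p=3$ with $(n_1,t_1,s_1)=(n_2,t_2,s_2)=(8,4,2)$ and $(n_3,t_3,s_3)=(14,2,7)$ (all hypotheses of Theorem~\ref{main1} hold, including $\prod_i\binom{n_i}{s_i}\ge\prod_i\binom{n_i}{t_i}$ and $n_i\le\frac74 n_j$), and consider $\mathcal{D}=\{(A_1,A_2),(\bar A_1,\bar A_2)\}\times\binom{[n_3]}{t_3}$. For any $\gamma=(\gamma_1,\gamma_2,\gamma_3)$ one checks $\gamma(\mathcal{D})\cap\mathcal{D}\in\{\emptyset,\mathcal{D}\}$ (if $\gamma_1(A_1)\notin\{A_1,\bar A_1\}$ or $\gamma_2(A_2)\notin\{A_2,\bar A_2\}$ the projections are disjoint; otherwise $\gamma$ acts on $\{A_i,\bar A_i\}^2$ as a translation of $\mathbb{F}_2^2$ and $\mathcal{D}$ corresponds to the diagonal subgroup), so $\mathcal{D}$ is an imprimitive subset of $\mathcal{X}$, yet it is not the product of its projections. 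In general the blocks supported on the half-coordinates are ``affine subcubes'' of $\prod_{i\in T_1}\{A_i,\bar A_i\}$, not only full subproducts, so your Step 1 (all nonempty fibers coincide) is genuinely false in this regime.

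For context, the paper's own proof follows the same projection strategy and passes from the classification of the projections $\mathcal{A}|_i$ to the asserted product form without justification, i.e.\ it glosses over precisely the step you identified; so you have located the real crux rather than missed a trick, but neither your transposition/fiber argument nor the paper's assertion settles it. A correct treatment must either assume that at most one coordinate satisfies $n_i=2t_i$ (respectively $n_i=2s_i$ on the $\mathcal{Y}$ side), or enlarge the classification to include the diagonal-type blocks above and then verify (for the application through Theorem~\ref{key01} and Claim~\ref{size estamitae}) that such blocks never achieve the minimum of $|N(\mathcal{A})|-|\mathcal{A}|$; as written, your proposal (like the claim itself) does not cover them.
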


If $\Gamma=\prod_{i=1}^{p}S_{n_i}$ is imprimitive on $\mathcal{X}$, then from the definition we know that $\Gamma$ preserves a nontrivial partition $\{\mathcal{X}_j\}_{j=1}^{L}$ of $\mathcal{X}$. By projecting $\mathcal{X}_j$ to the $i$-th factor, we can obtain that $\bigsqcup_{j=1}^{L}(\mathcal{X}_j|_{i})=\mathcal{X}|_i={[n_i]\choose t_i}$ and $\Gamma|_{i}=S_{n_i}$ preserving this partition of $[n_i]\choose t_i$.

It is well known that for each $A_i\in {[n_i]\choose t_i}$, the stabilizer of $A_i$ is isomorphic to $S_{t_i}\times S_{n_i-t_i}$, which is a maximal subgroup of $S_{n_i}$ if $2t_i\neq n_i$ (see e.g. \cite{NB06}). Then by Theorem~\ref{primitivity}, we obtain that $S_{n_i}$ is primitive on $[n_i]\choose t_i$ unless $2t_i=n_i$, which means for the factors with $2t_i\neq n_i$ the partition $\bigsqcup_{j=1}^{L}(\mathcal{X}_j|_{i})$ of ${[n_i]\choose t_i}$ must be a trivial partition. Thus for each $j\in L$, $\mathcal{X}_j|_{i}$ is either a singleton in ${[n_i]\choose t_i}$, or $\mathcal{X}_j|_{i}={[n_i]\choose t_i}$.

When $2t_i=n_i$, it can be easily verified that the only imprimitive subset of $[n_i]\choose t_i$ has the form $\{A_i,\bar{A_i}\}$. Therefore, for the factors with $2t_i=n_i$, the partition $\bigsqcup_{j=1}^{L}(\mathcal{X}_j|_{i})$ of ${[n_i]\choose t_i}$ is either a trivial partition, or each partition block has the form $\mathcal{X}_j|_{i}=\{A_{i,j},\bar{A}_{i,j}\}$ for some $A_{i,j}\in {[n_i]\choose t_i}$.

Since each imprimitive subset of $\mathcal{X}$ can be seen as a block of a nontrivial partition of $\mathcal{X}$, we have $\mathcal{A}=\mathcal{X}_j$ for some $j\in [L]$. From the analysis above, we know that $\mathcal{A}|_i=\{A_i\}$ or $\{A_i,\bar{A_i}\}$ for some $A_i\in {[n_i]\choose t_i}$, or $\mathcal{A}|_i={[n_i]\choose t_i}$. Therefore, set $T_1\subseteq [p]$ such that for all $i\in T_1$, $2t_i=n_i$ and $\mathcal{A}|_i=\{A_i,\bar{A_i}\}$ for some $A_i\in {[n_i]\choose t_i}$; set $T_2\subseteq [p]$ such that for all $i\in T_1$, $\mathcal{A}|_i$ is a singleton, finally, we have
\begin{equation*}
\mathcal{A}=\prod_{i\in T_1}{\{A_i,\bar{A_i}\}}\times\prod_{i\in T_2}{\{A_i\}}\times\prod_{i\in [p]\setminus(T_1\sqcup T_2)}{[n_i]\choose t_i}.
\end{equation*}
The proof for the imprimitive subsets of $\mathcal{Y}$ is the same as that of $\mathcal{X}$. Thus, the claim holds.

By Claim~\ref{imprimitive subset}, we know that for the imprimitive subsets $\mathcal{A}$ and $\mathcal{B}$ above
\begin{equation*}
|\mathcal{A}|=2^{|T_1|}\cdot\prod_{i\in [p]\setminus(T_1\sqcup T_2)}{n_i \choose t_i}~\text{and}~|\mathcal{B}|=2^{|R_1|}\cdot\prod_{i\in [p]\setminus(R_1\sqcup R_2)}{n_i \choose s_i}.
\end{equation*}
And since
\begin{align*}
N(\mathcal{A})&=\{B\in \mathcal{Y}:~A\cap B=\emptyset\text{ for some }A\in\mathcal{A}\}\\
&=\prod_{i\in T_1}{({A_i\choose s_i}\sqcup{\bar{A}_i\choose s_i})}\times\prod_{i\in T_2}{{[n_i]\setminus A_i}\choose s_i}\times\prod_{i\in [p]\setminus(T_1\sqcup T_2)}{[n_i]\choose s_i},\\
N(\mathcal{B})&=\{A\in \mathcal{X}:~A\cap B=\emptyset\text{ for some }B\in\mathcal{B}\}\\
&=\prod_{i\in R_1}{({B_i\choose t_i}\sqcup{\bar{B}_i\choose t_i})}\times\prod_{i\in R_2}{{[n_i]\setminus B_i}\choose t_i}\times\prod_{i\in [p]\setminus(R_1\sqcup R_2)}{[n_i]\choose t_i},\\
\end{align*}
we have
\begin{align*}
|N(\mathcal{A})|=2^{|T_1|}\cdot\prod_{i\in T_1}{{\frac{n_i}{2}}\choose s_i}\cdot\prod_{i\in T_2}{{n_i-t_i}\choose s_i}\cdot\prod_{i\in [p]\setminus(T_1\sqcup T_2)}{n_i\choose s_i},\\
|N(\mathcal{B})|=2^{|R_1|}\cdot\prod_{i\in R_1}{{\frac{n_i}{2}}\choose t_i}\cdot\prod_{i\in R_2}{{n_i-s_i}\choose t_i}\cdot\prod_{i\in [p]\setminus(R_1\sqcup R_2)}{n_i\choose t_i}.
\end{align*}
Now we can estimate quantities $|\mathcal{A}'|+|\mathcal{Y}|-|N(\mathcal{A}')|$ and $|\mathcal{B}'|+|\mathcal{X}|-|N(\mathcal{B}')|$.

\begin{claim}\label{size estamitae}
With the assumptions in the theorem, for all imprimitive subsets $\mathcal{A}\subseteq\mathcal{X}$ and $\mathcal{B}\subseteq\mathcal{Y}$, $|\mathcal{Y}|-d(\mathcal{X})+1> |\mathcal{A}|+|\mathcal{Y}|-|N(\mathcal{A})|$, and $|\mathcal{Y}|-d(\mathcal{X})+1> |\mathcal{B}|+|\mathcal{X}|-|N(\mathcal{B})|$.
\end{claim}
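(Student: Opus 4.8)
The plan is to prove Claim~\ref{size estamitae} by reducing both inequalities to concrete comparisons of products of binomial coefficients, using the structural description of imprimitive subsets from Claim~\ref{imprimitive subset}. We have $|\mathcal{Y}|-d(\mathcal{X})+1 = \prod_{i\in[p]}\binom{n_i}{s_i} - \prod_{i\in[p]}\binom{n_i-t_i}{s_i} + 1$, so after cancelling the common factors $\prod_{i\in[p]\setminus(T_1\sqcup T_2)}\binom{n_i}{s_i}$ the first inequality becomes, writing $c=\prod_{i\in[p]\setminus(T_1\sqcup T_2)}\binom{n_i}{s_i}$,
\begin{equation*}
c\Bigl(\prod_{i\in T_1\sqcup T_2}\binom{n_i}{s_i} - \prod_{i\in T_1\sqcup T_2}\binom{n_i-t_i}{s_i}\Bigr) + 1 > 2^{|T_1|}c + c - 2^{|T_1|}c\prod_{i\in T_1}\binom{n_i/2}{s_i}\prod_{i\in T_2}\binom{n_i-t_i}{s_i}.
\end{equation*}
First I would isolate the hardest sub-case: $T_2 = \emptyset$ and $|T_1|=1$, say $T_1=\{1\}$ with $n_1=2t_1$, since this is the configuration producing the near-equality examples in Remark~\ref{condition_of_main1}. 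Here the inequality reduces to showing $\binom{n_1}{s_1} - 2\binom{n_1/2}{s_1} > 2 - \tfrac{1}{c}$, i.e.\ essentially $\binom{2t_1}{s_1} > 2\binom{t_1}{s_1} + 1$ once we note $c\ge 1$; this is where the constraints $s_1,t_1\ge 2$ and $n_1\le \tfrac74 n_j$ (which forces the $p\ge 2$ other factors to be comparable, bounding how small $c$ can be forced to be) must be invoked.

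Next I would handle the general case by induction or a direct product estimate. The key observation is that for a single factor with $n_i \ge s_i+t_i$, $2\le s_i,t_i$, one has $\binom{n_i}{s_i} > \binom{n_i-t_i}{s_i} + \binom{n_i/2}{s_i}$-type gaps, and products of such gaps dominate the additive correction $2^{|T_1|}c + c$. Concretely I would bound $\prod_{i\in T_1\sqcup T_2}\binom{n_i}{s_i} - \prod_{i\in T_1\sqcup T_2}\binom{n_i-t_i}{s_i} \ge \binom{n_{i_0}}{s_{i_0}} - \binom{n_{i_0}-t_{i_0}}{s_{i_0}}$ times $\prod_{i\in(T_1\sqcup T_2)\setminus\{i_0\}}\binom{n_i-t_i}{s_i}$ for a well-chosen index $i_0$ (taking $i_0\in T_1$ if $T_1\ne\emptyset$), and compare this against $2^{|T_1|}c + c$, using that each factor $\binom{n_i-t_i}{s_i}\ge \binom{n_i/2}{s_i}$ grows at least like $2^{s_i}\ge 4$ relative to the doubling from $2^{|T_1|}$. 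The ratio hypothesis $n_i\le\tfrac74 n_j$ enters to prevent $c$ — which lives over the complementary index set — from being forced so large (relative to the $T_1\sqcup T_2$ part) that the additive terms $2^{|T_1|}c+c$ swamp the multiplicative gap; alternatively one checks that $c$ and the gap scale together. The second inequality, $|\mathcal{Y}|-d(\mathcal{X})+1 > |\mathcal{B}|+|\mathcal{X}|-|N(\mathcal{B})|$, is handled symmetrically, now additionally using $\prod_i\binom{n_i}{s_i}\ge\prod_i\binom{n_i}{t_i}$ to replace $|\mathcal{X}|$-terms by $|\mathcal{Y}|$-terms at an acceptable cost, together with $d(\mathcal{X})=\prod_i\binom{n_i-t_i}{s_i}$ being comparably small.

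I expect the main obstacle to be the bookkeeping in the case $T_2\neq\emptyset$ combined with $|T_1|$ large: there the term $2^{|T_1|}c\prod_{i\in T_1}\binom{n_i/2}{s_i}\prod_{i\in T_2}\binom{n_i-t_i}{s_i}$ we are subtracting on the right is itself large, so one must show the left-hand product gap $\prod_{T_1\sqcup T_2}\binom{n_i}{s_i}-\prod_{T_1\sqcup T_2}\binom{n_i-t_i}{s_i}$ genuinely exceeds $2^{|T_1|}c - 2^{|T_1|}c\prod_{T_1}\binom{n_i/2}{s_i}\prod_{T_2}\binom{n_i-t_i}{s_i}$, which requires a factor-by-factor inequality of the shape $\binom{n_i}{s_i} - \binom{n_i-t_i}{s_i} \ge 2\bigl(\binom{n_i}{s_i}^{1/?} - \cdots\bigr)$ and telescoping over the index set; a clean way is to prove the one-factor statement $\binom{n_i}{s_i}>2\binom{n_i-t_i}{s_i}$ whenever $n_i\ge s_i+t_i$, $2\le t_i\le n_i/2$ fails in general (e.g.\ $t_i=2$, $n_i$ large, $s_i=2$ gives ratio near $1$), so instead the correct elementary lemma to extract and prove first is: under the stated hypotheses, $\prod_{i\in[p]}\binom{n_i}{s_i} - \prod_{i\in[p]}\binom{n_i-t_i}{s_i}$ strictly exceeds any of the finitely many right-hand expressions arising from Claim~\ref{imprimitive subset}, which I would verify by a monotonicity argument in the $n_i$'s after pinning down the extremal configuration. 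Once that lemma is in place the claim follows by substitution.
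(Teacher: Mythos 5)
Your reduction of the first inequality is not algebraically correct, and the error hides exactly the part of the claim that is hard. After cancelling $|\mathcal{Y}|$ from both sides, what must be shown is $|N(\mathcal{A})|+1>|\mathcal{A}|+d(\mathcal{X})$, i.e.
\[
2^{|T_1|}\prod_{i\in T_1}\binom{n_i/2}{s_i}\prod_{i\in T_2}\binom{n_i-t_i}{s_i}\prod_{i\notin T_1\sqcup T_2}\binom{n_i}{s_i}\;+\;1\;>\;2^{|T_1|}\prod_{i\notin T_1\sqcup T_2}\binom{n_i}{t_i}\;+\;\prod_{i\in[p]}\binom{n_i-t_i}{s_i},
\]
whereas your displayed inequality replaces $|\mathcal{A}|=2^{|T_1|}\prod_{i\notin T_1\sqcup T_2}\binom{n_i}{t_i}$ by $2^{|T_1|}c$ with $c=\prod_{i\notin T_1\sqcup T_2}\binom{n_i}{s_i}$ (only the full products over $[p]$ are comparable by hypothesis; these partial products can go either way), replaces $|\mathcal{Y}|$ by $c$, and replaces $d(\mathcal{X})$ by $c\prod_{i\in T_1\sqcup T_2}\binom{n_i-t_i}{s_i}$. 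Because of this, your sub-case analysis addresses the wrong statement: in your ``hardest'' case ($T_1=\{1\}$, $T_2=\emptyset$, $n_1=2t_1$) the genuine inequality is $2\binom{t_1}{s_1}\prod_{i\geq2}\binom{n_i}{s_i}+1>2\prod_{i\geq2}\binom{n_i}{t_i}+\binom{t_1}{s_1}\prod_{i\geq2}\binom{n_i-t_i}{s_i}$, a comparison between $t$-uniform and $s$-uniform counts across the \emph{other} coordinates, not the single-factor statement $\binom{2t_1}{s_1}>2\binom{t_1}{s_1}+1$. Moreover the truly delicate configuration is not the one you singled out: the examples in Remark~\ref{condition_of_main1} showing the claim fails when $s_i,t_i\leq n_i/2$ or $n_i\leq\frac74 n_j$ is dropped correspond to $T_1=\emptyset$, $T_2=\{1\}$ in Claim~\ref{imprimitive subset}, which you relegate to ``bookkeeping''.

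The deeper gap is that your proposed key lemma is circular: after correctly observing that the per-factor bound $\binom{n_i}{s_i}>2\binom{n_i-t_i}{s_i}$ fails, you fall back on ``the global difference strictly exceeds any of the finitely many right-hand expressions, verified by monotonicity after pinning down the extremal configuration,'' which is simply a restatement of Claim~\ref{size estamitae} with no mechanism for how the hypotheses enter. The actual content of the paper's proof is precisely that mechanism: normalize, writing $d_1=1-\beta_1-\beta_2+\theta$ with $\beta_1=|\mathcal{A}|/|N(\mathcal{A})|$ and $\beta_2=d(\mathcal{X})/|N(\mathcal{A})|$, use the identity $\binom{n_i}{t_i}\binom{n_i-t_i}{s_i}=\binom{n_i}{s_i}\binom{n_i-s_i}{t_i}$ to rewrite $\beta_1=\prod_{i\in[p]}\frac{\binom{n_i}{t_i}}{\binom{n_i}{s_i}}\cdot\prod_{i\in T_1\sqcup T_2}\binom{n_i-s_i}{t_i}^{-1}$ so that the global hypothesis $\prod_i\binom{n_i}{s_i}\geq\prod_i\binom{n_i}{t_i}$ absorbs the first product, and then establish $\beta_1+\beta_2\leq1$ by explicit estimates in which $s_i,t_i\leq n_i/2$, $n_i\geq s_i+t_i+1\geq5$ and $n_i\leq\frac74 n_j$ appear through terms such as $\frac{6}{n_i}-\frac{2}{n_i-1}-\frac{2}{n_j}$ with $i\neq j$; the $\mathcal{B}$-inequality is not literally symmetric but carries the extra correction $\delta=(|\mathcal{Y}|-|\mathcal{X}|)/|\mathcal{X}|$, giving $\frac{\eta_1+\eta_2}{1+\delta}\leq1$. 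Without some device of this kind that converts the mixed $t$-uniform/$s$-uniform comparison into one controlled by the stated global hypotheses, your plan does not yield the claim.
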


We prove the claim by estimating the difference directly. Denote
\begin{align*}
&~D_1=|N(\mathcal{A})|-|\mathcal{A}|-d(\mathcal{X})+1\text{~and}\\
D_2&=|\mathcal{Y}|-|\mathcal{X}|+|N(\mathcal{B})|-|\mathcal{B}|-d(\mathcal{X})+1
\end{align*}
to be the differences between $|\mathcal{Y}|-d(\mathcal{X})+1$ and, respectively, $|\mathcal{A}|+|\mathcal{Y}|-|N(\mathcal{A})|$ and $|\mathcal{B}|+|\mathcal{X}|-|N(\mathcal{B})|$.
Set $d_1=\frac{D_1}{|N(\mathcal{A})|}$, $d_2=\frac{D_2}{|\mathcal{X}|}$.
Then, we have $d_1=1-\beta_1-\beta_2+\theta$, $d_2=\delta+\eta_0\cdot(1-\eta_1-\eta_2)+\theta'$, where $\theta=|N(\mathcal{A})|^{-1}$, $\delta=\frac{|\mathcal{Y}|-|\mathcal{X}|}{|\mathcal{X}|}$, $\eta_0=\frac{|N(\mathcal{B})|}{|\mathcal{X}|}$, $\theta'=|\mathcal{X}|^{-1}$, $\beta_1=\frac{|\mathcal{A}|}{|N(\mathcal{A})|}$, $\beta_2=\frac{d(\mathcal{X})}{|N(\mathcal{A})|}$, $\eta_1=\frac{|\mathcal{B}|}{|N(\mathcal{B})|}$, and $\eta_2=\frac{d(\mathcal{X})}{|N(\mathcal{B})|}$.

Since ${n_i\choose t_i}\cdot{{n_i-t_i}\choose s_i}={n_i\choose s_i}\cdot{{n_i-s_i}\choose t_i}$ for each $i\in [p]$, we have $1/{{n_i-t_i}\choose s_i}={n_i\choose t_i}/({n_i\choose s_i}\cdot{{n_i-s_i}\choose t_i})$ for each $i\in[p]$. This yields that
\begin{align*}
&~~~~\beta_1=\prod_{i\in [p]}\frac{{n_i \choose t_i}}{{n_i\choose s_i}}\cdot\prod_{i\in T_1\sqcup T_2}\frac{1}{{{n_i-s_i}\choose t_i}},~~\beta_2=\frac{1}{2^{|T_1|}}\cdot\prod_{i\in [p]\setminus(T_1\sqcup T_2)}\prod_{j=0}^{s_i-1}(1-\frac{t_i}{n_i-j}),\\
\eta_1&=\prod_{i\in [p]}\frac{{n_i \choose s_i}}{{n_i\choose t_i}}\cdot\prod_{i\in R_1\sqcup R_2}\frac{1}{{{n_i-t_i}\choose s_i}},~~\eta_2=\prod_{i\in [p]}\frac{{n_i \choose s_i}}{{n_i\choose t_i}}\cdot\frac{1}{2^{|R_1|}}\cdot\prod_{i\in [p]\setminus(R_1\sqcup R_2)}\prod_{j=0}^{t_i-1}(1-\frac{s_i}{n_i-j}).
\end{align*}

By the assumptions, we know that $n_i\geq s_i+t_i+1\geq5$, $\prod_{i\in [p]}\frac{{n_i \choose t_i}}{{n_i\choose s_i}}\leq 1$ and ${{n_i-s_i}\choose t_i}\geq {\lceil\frac{n_i}{2}\rceil\choose t_i}\geq \frac{n_i}{2}$. Since $T_1\sqcup T_2\neq\emptyset$, $R_1\sqcup R_2\neq\emptyset$ and $T_2,R_2\neq[p]$, we can obtain
\begin{align*}
\beta_1&\leq\prod_{i\in T_1\sqcup T_2}\frac{1}{{{n_i-s_i}\choose t_i}}\leq\max\limits_{i\in (T_1\sqcup T_2)}{\{(\frac{2}{n_i+2})^{|T_1|}\cdot(\frac{2}{n_i})^{|T_2|}\}},\\
&~\beta_2\leq\frac{1}{2^{|T_1|}}\cdot\max\limits_{i\in[p]\setminus (T_1\sqcup T_2)}\{(1-\frac{4n_i-6}{n_i(n_i-1)})^{p-(|T_1|+|T_2|)}\},
\end{align*}
and
\begin{align*}
\eta_1&\leq(1+\delta)\cdot\prod_{i\in R_1\sqcup R_2}\frac{1}{{{n_i-t_i}\choose s_i}}\leq(1+\delta)\cdot\max\limits_{i\in (R_1\sqcup R_2)}{\{(\frac{2}{n_i+2})^{|R_1|}\cdot(\frac{2}{n_i})^{|R_2|}\}},\\
&~~~~~~~~~\eta_2\leq(1+\delta)\cdot\frac{1}{2^{|R_1|}}\cdot\max\limits_{i\in[p]\setminus (R_1\sqcup R_2)}\{(1-\frac{4n_i-6}{n_i(n_i-1)})^{p-(|R_1|+|R_2|)}\}.
\end{align*}
This leads to
\begin{flalign*}
\beta_1+\beta_2\leq\left\{\begin{array}{ll}1-\min\limits_{i\neq j\in[p]}\{\frac{6}{n_i}-\frac{2}{n_i-1}-\frac{2}{n_j}\}, &~\text{if}~T_2\neq\emptyset;\\
\frac{1}{2}-\min\limits_{i\neq j\in[p]}\{\frac{3}{n_i}-\frac{1}{n_i-1}-\frac{2}{n_j+2}\}, &~\text{otherwise};\end{array}\right.
\end{flalign*}
and
\begin{flalign*}
\frac{\eta_1+\eta_2}{1+\delta}\leq\left\{\begin{array}{ll}1-\min\limits_{i\neq j\in[p]}\{\frac{6}{n_i}-\frac{2}{n_i-1}-\frac{2}{n_j}\}, &~\text{if}~R_2\neq\emptyset;\\
\frac{1}{2}-\min\limits_{i\neq j\in[p]}\{\frac{3}{n_i}-\frac{1}{n_i-1}-\frac{2}{n_j+2}\}, &~\text{otherwise}.\end{array}\right.
\end{flalign*}
Since $5\leq n_i\leq\frac{7}{4} n_j$ for all distinct $i,j\in[p]$, thus we have $\beta_1+\beta_2,\frac{\eta_1+\eta_2}{1+\delta}\leq 1$. Therefore,
\begin{align*}
&~~~~~~~~~~~~~~~~~~~~~~~~~d_1=1-\beta_1-\beta_2+\theta> 1-\beta_1-\beta_2\geq0,\\
d_2&=\delta+\eta_0\cdot(1-\eta_1-\eta_2)+\theta'=\delta\cdot(1-\eta_0\cdot\frac{\eta_1+\eta_2}{1+\delta})+\eta_0\cdot(1-\frac{\eta_1+\eta_2}{1+\delta})+\theta'>0.
\end{align*}
Thus, the claim holds.

For each pair of non-empty cross-intersecting families $(\mathcal{A},\mathcal{B})\in 2^{\mathcal{X}}\times2^{\mathcal{Y}}$, $\mathcal{A}\cup \mathcal{B}$ forms a nontrivial independent set of $G(\mathcal{X},\mathcal{Y})$. Therefore, by Claim~\ref{size estamitae}, the inequality~(\ref{eq02}) holds.

To complete the proof, we need to characterize all the nontrivial fragments in $\mathcal{F}(\mathcal{X})$. As a direct consequence of Claim~\ref{size estamitae}, every fragment of $G(\mathcal{X},\mathcal{Y})$ is primitive. Hence, by Theorem~\ref{key00}, when $\prod_{i\in [p]}{n_i \choose t_i}<\prod_{i\in [p]}{n_i\choose s_i}$, $\mathcal{X}$ has only $1$-fragments.

When $\prod_{i\in [p]}{n_i \choose t_i}=\prod_{i\in [p]}{n_i\choose s_i}$, suppose there are nontrivial fragments in $\mathcal{F}(\mathcal{X})$. W.l.o.g., assume that $\mathcal{S}$ is a minimal-sized nontrivial fragment in $\mathcal{X}$. By Theorem~\ref{key00}, $\mathcal{S}$ is semi-imprimitive. Since for any two different elements $A,B\in\mathcal{X}$, $|N(A)\cap N(B)|<\prod_{i\in [p]}{{n_i-t_i}\choose s_i}-1$. Therefore, there are no $2$-fragments in $\mathcal{F(\mathcal{X})}$. By Proposition~\ref{fragment2}, $\mathcal{S}$ is balanced.

Now we are going to prove the non-existence of such $\mathcal{S}$ by analyzing its size and structure, which will yield that $\mathcal{X}$ also has only $1$-fragments when $\prod_{i\in [p]}{n_i \choose t_i}=\prod_{i\in [p]}{n_i\choose s_i}$.

For each $A=\prod_{i\in [p]}A_i\in \mathcal{S}$, let $\Gamma_A=\prod_{i\in [p]}(S_{A_i}\times S_{\bar{A}_i})$, $\Gamma_{\mathcal{S}}=\{\sigma\in \Gamma:~\sigma(\mathcal{S})=\mathcal{S}\}$ and $\Gamma_{A,\mathcal{S}}=\{\sigma\in \Gamma_A:~\sigma(\mathcal{S})=\mathcal{S}\}$. We claim that there exists a subset $C\in \mathcal{S}$ such that $\Gamma_{C}\neq \Gamma_{C,\mathcal{S}}$. Otherwise, for any two different subsets $B,B'\in \mathcal{S}$, we have $\Gamma_{B}=\Gamma_{B,\mathcal{S}}$ and $\Gamma_{B'}= \Gamma_{B',\mathcal{S}}$. Since $\Gamma_{B,\mathcal{S}}$ and $\Gamma_{B',\mathcal{S}}$ are both subgroups of $\Gamma_{\mathcal{S}}$, we have $\langle\Gamma_{B},\Gamma_{B'}\rangle$ is a subgroup of $\Gamma_{\mathcal{S}}$. Let $T\subseteq [p]$ be the factors where $B'_i=B_i~(\text{or}~\bar{B}_i~\text{if}~2t_i=n_i)$, write
$$\Gamma_B=\prod_{i\in T}(S_{B_i}\times S_{\bar{B}_i})\times\prod_{i\in [p]\setminus T}(S_{B_i}\times S_{\bar{B}_i}),$$
then we have,
$$\Gamma_{B'}=\prod_{i\in T}(S_{B_i}\times S_{\bar{B}_i})\times\prod_{i\in [p]\setminus T}(S_{B'_i}\times S_{\bar{B}'_i}).$$
Since $\langle S_{B_i}\times S_{\bar{B}_i}, S_{B'_i}\times S_{\bar{B}_i'}\rangle=S_{n_i}$ for each $B'_i\ne B_i~(\text{and}~B'_i\ne\bar{B}_i~\text{if}~2t_i=n_i)$, we have $$\langle\Gamma_{B},\Gamma_{B'}\rangle=\prod_{i\in T}(S_{B_i}\times S_{\bar{B}_i})\times\prod_{i\in [p]\setminus T}S_{n_i}.$$
Therefore, for some fixed $B\in\mathcal{S}$, $\Gamma_{\mathcal{S}}$ contains $\prod_{i\in T'}(S_{B_i}\times S_{\bar{B}_i})\times\prod_{i\in [p]\setminus T'}S_{n_i}$ as a subgroup, where
$$T'=\{i|i\in[p],\text{ such that }A_i=B_i~(\text{or }\bar{B}_i\text{ if }2t_i=n_i)\text{ for all }A\in\mathcal{S}\}.$$
When $T'=\emptyset$, we have $\Gamma_{\mathcal{S}}=\prod_{i\in [p]}S_{n_i}$, thus $\mathcal{S}=\mathcal{X}$, yielding a contradiction. When $T'\neq \emptyset$, if $|T'|=1$, w.l.o.g., taking $T'=\{1\}$, we have $(S_{B_1}\times S_{\bar{B}_1})\times\prod_{i\in [p]\setminus \{1\}}S_{n_i}\subseteq\Gamma_{\mathcal{S}}$. Therefore, since $\mathcal{S}\neq\mathcal{X}$, from the definition of $T'$ we have
$$\mathcal{S}=\{B_1\}\times\prod_{i\in [p]\setminus \{1\}}{[n_i]\choose t_i}, \text{~or~} S=\{B_1,\bar{B}_1\}\times\prod_{i\in [p]\setminus \{1\}}{[n_i]\choose t_i}\text{~when $2t_1=n_1$}.$$
In both cases, $|\mathcal{S}|<\frac{\alpha(\mathcal{X},\mathcal{Y})}{2}$. If $|T'|\geq 2$, we have
$$\mathcal{S}\subseteq\{B_{i_0}\}\times\prod_{i\in [p]\setminus \{i_0\}}{[n_i]\choose t_i}, \text{~or~} S\subseteq\{B_{i_0},\bar{B}_{i_0}\}\times\prod_{i\in [p]\setminus \{i_0\}}{[n_i]\choose t_i}\text{~when $2t_{i_0}=n_{i_0}$},$$
for some $i_0\in T'$. Therefore, when $T'\neq \emptyset$, we always have $|\mathcal{S}|<\frac{\alpha(\mathcal{X},\mathcal{Y})}{2}$, which contradicts the fact that $\mathcal{S}$ is balanced. Hence, the existence of $C$ is guaranteed.

By Proposition~\ref{fragment2} we have that $[\Gamma_{C}:\Gamma_{C,\mathcal{S}}]$, the index of $\Gamma_{C,\mathcal{S}}$ in $\Gamma_{C}$, equals 2. Now let $\Gamma_{C,\mathcal{S}}[C_i]$ be the projection of $\Gamma_{C,\mathcal{S}}$ onto $S_{C_i}$, $\Gamma_{C,\mathcal{S}}[C_i]$ must be a subgroup of $S_{C_i}$ of index no greater than 2. Thus $\Gamma_{C,\mathcal{S}}[C_i]=S_{C_i}$ or $A_{C_i}$. Since $\Gamma_C=\prod_{i\in [p]}(S_{C_i}\times S_{\bar{C}_i})$, we know that $\Gamma_{C,\mathcal{S}}=\prod_{i\in [p]\setminus\{j\}}(S_{C_i}\times S_{\bar{C}_i})\times(A_{C_{j}}\times S_{\bar{C}_{j}})$ or $\prod_{i\in [p]\setminus\{j\}}(S_{C_i}\times S_{\bar{C}_i})\times(S_{C_{j}}\times A_{\bar{C}_{j}}$), for some $j\in [p]$.

Since for all $i\in [p]$, $t_i=|B_i\cap C_i|+|B_i\cap \bar{C}_i|$ for each pair $B,C\in \mathcal{S}$. If $|B_i\cap C_i|>1$, let $s,t\in B_i\cap C_i$, then the transposition $(s~t)$ fixes both $C_i$ and $B_i$. Taking $i=j$, the semi-imprimitivity of $\mathcal{S}$ implies that $(s~t)\in \Gamma_{C,\mathcal{S}}|_{S_{C_{j}}\times S_{\bar{C}_{j}}}$. This yields $\Gamma_{C,\mathcal{S}}|_{S_{C_{j}}\times S_{\bar{C}_{j}}}=S_{C_{j}}\times A_{\bar{C}_{j}}$. From this process it follows that, for each $B\in \mathcal{S}$, there exists at most one of $|B_j\cap C_j|$ and $|B_j\cap \bar{C}_j|$ to be greater than $1$. Note that if $B_j\in\bar{C}_j$, then $S_{C_j}$ and $S_{B_j}$ fix both $C_j$ and $B_j$, i.e., $S_{C_j}\times S_{B_j}\subseteq \Gamma_{C,\mathcal{S}}|_{S_{C_{j}}\times S_{\bar{C}_{j}}}$. Since $\Gamma_{C,\mathcal{S}}|_{S_{C_{j}}\times S_{\bar{C}_{j}}}=A_{C_{j}}\times S_{\bar{C}_{j}}$ or $S_{C_{j}}\times A_{\bar{C}_{j}}$, and neither $A_{C_{j}}\times S_{\bar{C}_{j}}$ nor $S_{C_{j}}\times A_{\bar{C}_{j}}$ contains $S_{C_j}\times S_{B_j}$. Therefore, we obtain that $|B_j\cap C_j|=1$ for each $B\in \mathcal{S}$, or $|B_j\cap C_j|=t_j-1$ for each $B\in \mathcal{S}$.

We claim that for both cases, $\mathcal{S}$ can not be balanced.

Suppose $|B_j\cap C_j|=1$ for each $B\in \mathcal{S}$. W.l.o.g., assume $B_j\cap C_j=\{1\}$ for some $B\in \mathcal{S}$. From the semi-imprimitivity of $\mathcal{S}$, we know that for all $\gamma\in \Gamma,~\gamma(\mathcal{S})\cap\mathcal{S}=\emptyset, ~\mathcal{S}$ or $\{A\}$ for some $A\in \mathcal{S}$. Thus $(\gamma(\mathcal{S})\cap\mathcal{S})|_j=\emptyset,~\mathcal{S}|_j$ or $\{A_j\}$ for some $A_j\in {[n_j]\choose t_j}$. If $t_j>2$, then $|B_j\cap \bar{C}_j|\geq 2$, so $\Gamma_{C,\mathcal{S}}|_{S_{C_{j}}\times S_{\bar{C}_{j}}}=A_{C_{j}}\times S_{\bar{C}_{j}}$. On the other hand, we can find distinct $s,t\in C_j$ such that $(1~s~t)(B_j)=B_j\setminus\{1\}\cup\{s\}\in \mathcal{S}|_j$ since $(1~s~t)\in A_{C_j}$. Then $(1~s)(\mathcal{S}|_j)$ has more than one element of $\mathcal{S}|_j$, therefore $(1~s)\in \Gamma_{C,\mathcal{S}}|_{S_{C_{j}}\times S_{\bar{C}_{j}}}$. This contradiction proves that $t_j=2$. Thus $\mathcal{S}|_j=\mathcal{C}=\{A_j\in{[n_j]\choose 2}:~1\in A_j\}$. Otherwise, w.l.o.g., assume $C_j=\{1,2\}$ and there exists $B\in \mathcal{S}$ such that $B_j\cap C_j=\{2\}$. Since $\Gamma_{C,\mathcal{S}}|_{S_{C_{j}}\times S_{\bar{C}_{j}}}=A_{C_{j}}\times S_{\bar{C}_{j}}$ or $S_{C_{j}}\times A_{\bar{C}_{j}}$, we have $\mathcal{C}\subseteq \mathcal{S}|_j$ and $\mathcal{C'}=\{A_j\in{[n_j]\choose 2}:~2\in A_j\}\subseteq\mathcal{S}|_j$. Thus $\mathcal{S}|_j=\mathcal{C}\cup\mathcal{C'}$. This yields $\Gamma_{C,\mathcal{S}}|_{S_{C_{j}}\times S_{\bar{C}_{j}}}=S_{C_{j}}\times S_{\bar{C}_{j}}$, leading to a contradiction. 

Suppose now $|B_j\cap C_j|=t_j-1>1$ for each $B\in \mathcal{S}$. Similarly, we can prove that $n_j-t_j=2$, which contradicts $n_j\geq s_j+t_j+1$ and $2\leq s_j$, $t_j\leq \frac{n}{2}$. Therefore, for each $B\in \mathcal{S}$, $|B_j\cap C_j|=1$.

From the analysis above, we know that for each $B\in \mathcal{S}$, $B_j=\{1,b\}$ for some $b\in[n_j]$. Thus, for each $B\in \mathcal{S}$, we have $\Gamma_{B,\mathcal{S}}|_{S_{B_{j}}\times S_{\bar{B}_{j}}}=A_{B_{j}}\times S_{\bar{B}_{j}}$, and $\Gamma_{B,\mathcal{S}}=\prod_{i\in[p]\setminus \{j\}}(S_{B_i}\times S_{\bar{B}_i})\times(A_{B_{j}}\times S_{\bar{B}_{j}})$ since $[\Gamma_{B}:\Gamma_{B,\mathcal{S}}]=2$. Therefore $\Gamma_{\mathcal{S}}$ contains
$$\langle\Gamma_{B,\mathcal{S}},\text{~for~all~}B\in \mathcal{S}\rangle=\prod_{i\in T''}(S_{C_i}\times S_{\bar{C}_i})\times\prod_{i\in [p]\setminus (T''\cup\{j\})}S_{n_i}\times S_{[n_j]\setminus\{1\}}$$
as a subgroup, where $T''=\{i|i\in[p],\text{ such that }B_i=C_i~(\text{or }\bar{C}_i\text{ if }2t_i=n_i)\text{ for all }B\in\mathcal{S}\}$. Similarly, by arguing the structure of $\mathcal{S}$, if $T''\neq\emptyset$, we can prove that $|\mathcal{S}|<\frac{\alpha(\mathcal{X},\mathcal{Y})}{2}$. Thus we have $T''=\emptyset$ and $\mathcal{S}=\prod_{i\in[p]\setminus\{j\}}{[n_i]\choose t_i}\times \mathcal{C}$.

Since $\mathcal{S}$ is balanced, $\prod_{i\in [p]}{n_i \choose t_i}=\prod_{i\in [p]}{n_i\choose s_i}$ and $|\mathcal{S}|=\prod_{i\in[p]\setminus\{j\}}{n_i\choose t_i}\cdot (n_j-1)$, we have
\begin{equation}\label{eq05}
2\prod_{i\in[p]\setminus\{j\}}{n_i\choose t_i}\cdot (n_j-1)=\prod_{i\in[p]\setminus\{j\}}{n_i\choose t_i}\cdot{n_j\choose 2}-\prod_{i\in[p]\setminus\{j\}}{{n_i-s_i}\choose t_i}\cdot{{n_j-s_j}\choose 2}+1,
\end{equation}
which means $n_j$ must be an integral zero of the following function
\begin{align*}
H(x)=(1-a_0)\cdot x^2-(5-a_0\cdot(2s_j+1))\cdot x+(2b_0+4-a_0\cdot (s_j^2+s_j)),
\end{align*}
where $a_0=\prod_{i\in [p]\setminus \{j\}}\frac{{{n_i-s_i}\choose t_i}}{{n_i\choose t_i}}$ and $b_0=\prod_{i\in [p]\setminus \{j\}}{{n_i\choose t_i}^{-1}}$. Since $n_j\geq3+s_j$ and $2\leq s_j\leq \frac{n_j}{2}$, by Vieta's formulas for quadratic polynomials, there is no such $n_j$ satisfying $H(n_j)=0$ when $s_j\geq 3$. Hence $\mathcal{S}=\prod_{i\in[p]\setminus\{j\}}{[n_i]\choose t_i}\times \mathcal{C}$ is a nontrivial balanced fragment of $\mathcal{X}$ if and only if $t_j=s_j=2$ and equation~(\ref{eq05}) holds. Using the fact that $\frac{{{n_i-s_i}\choose t_i}}{{n_i\choose t_i}}\leq (1-\frac{s_i}{n_i})(1-\frac{s_i}{n_i-1})$ and the assumption $n_i\leq \frac{7}{4}n_j$ for distinct $i,j\in[p]$, it can be easily verified that the LHS of equation~(\ref{eq05}) is strictly less than the RHS when $s_j=2$. Therefore, $\mathcal{S}$ can not be balanced.

This completes the proof.
\end{proof}

\section{Concluding remarks}

In this paper we have investigated two multi-part generalizations of the cross-intersecting theorems. Our main contribution is determining the maximal size and the corresponding structures of the families for both trivially and nontrivially (with the non-empty restriction) cross-intersecting cases.

The method we used for the proof was originally introduced by Wang and Zhang in~\cite{WZ11}, which was further generalized to the bipartite case in~\cite{WZ13}. This method can deal with set systems, finite vector spaces and permutations uniformly. It is natural to ask whether we can extend the single-part cross-intersecting theorems for finite vector spaces and permutations to the multi-part case. It is possible for permutations when considering the case without the non-empty restriction, and we believe it is also possible for finite vector spaces. But when it comes to the case where the families are non-empty, as far as we know, there is still no result for finite vector spaces and permutations.

For single-part families $\mathcal{A}$ and $\mathcal{B}$, it is natural to define cross-t-intersecting as $|A\cap B|\geq t$ for each pair of $A\in \mathcal{A}$ and $B\in \mathcal{B}$. But for multi-part families, when defining cross-t-intersecting between two families, the simple extension of the definition for single-part case can be confusing. Therefore, a reasonable definition and related problems for multi-part cross-$t$-intersecting families are also worth considering.



\bibliographystyle{abbrv}
\bibliography{Multi-part_cross-intersecting_families}
\end{document}